\definecolor{darkblue}{rgb}{0.,0.1,0.5}
\definecolor{darkgreen}{rgb}{0.,0.5,0.1}
\definecolor{darkred}{rgb}{0.5,0.,0.1}
\definecolor{darkcyan}{rgb}{0.,0.5,0.5}
\newcommand\llam{\hat{\lambda}}
\newcommand\TT{\bar{l}}
\newcommand\ttt{y}
\def\uT{u_{+0}}
\def\uTtd{u_{+0}^{\tilde{\delta}}}
\def\uTdel{u_{+0}^{\delta}}
\newcommand\nnu{{\tilde{\nu}}}
\newcommand\ttau{{\tilde{\tau}}}
\newcommand\altil{{\tilde{\alpha}}}
\newcommand\betil{{\tilde{\beta}}}
\newcommand\gamtil{{\tilde{\gamma}}}
\newcommand\coeffalpha{{\mathfrak{a}}}
\newcommand\coeffbeta{{\mathfrak{b}}}
\newcommand\dl{\underline{d\ell}}
\newcommand\dgam{\underline{d\gamma}}
\newcommand\du{\underline{du}}
\newcommand\Ftil{\tilde{F}}
\newcommand\regpar{\varepsilon}
\newcommand\ulell{\underline{l}}
\newcommand\olell{\overline{l}}
\newcommand{\neumann}[1]{{#1}}
\newcommand{\dirichlet}[1]{{#1}}
\newcommand{\impedance}[1]{{#1}}
\newtheorem{theorem}{Theorem}[section]
\newtheorem{corollary}{Corollary}[section]
\newtheorem{problem}{Problem}[section]
\newtheorem{lemma}{Lemma}[section]
\newtheorem{remark}{Remark}[section]
\font\tenrm=cmr10
\font\teni=cmmi10 \skewchar\teni='177
\font\tensy=cmsy10 \skewchar\tensy='60
\font\tenex=cmex10
\font\tenit=cmti10
\font\tensl=cmsl10
\font\tenbf=cmbx10
\font\tentt=cmtt10
\font\ninerm=cmr9
\font\ninei=cmmi9 \skewchar\ninei='177
\font\ninesy=cmsy9 \skewchar\ninesy='60
\font\nineit=cmti9
\font\ninesl=cmsl9
\font\ninebf=cmbx9
\font\ninett=cmtt9
\font\eightrm=cmr8
\font\eighti=cmmi8 \skewchar\eighti='177
\font\eightsy=cmsy8 \skewchar\eightsy='60
\font\eightit=cmti8
\font\eightsl=cmsl8
\font\eightbf=cmbx8
\font\eighttt=cmtt8
\font\sevenrm=cmr7
\font\seveni=cmmi7 \skewchar\seveni='177
\font\sevensy=cmsy7 \skewchar\sevensy='60
\font\sevenbf=cmbx7
\font\sevenit=cmmi7
\font\sevensl=cmmi7
\font\seventt=cmr7
\font\sixrm=cmr6
\font\sixi=cmmi6 \skewchar\sixi='177
\font\sixsy=cmsy6 \skewchar\sixsy='60
\font\sixbf=cmbx6
\font\fiverm=cmr5
\font\fivei=cmmi5 \skewchar\fivei='177
\font\fivesy=cmsy5 \skewchar\fivesy='60
\font\fivebf=cmbx5
\def\tenpoint{\def\rm{\fam0\tenrm}%
        \textfont0=\tenrm \scriptfont0=\sevenrm \scriptscriptfont0=\fiverm
        \textfont1=\teni \scriptfont1=\seveni \scriptscriptfont1=\fivei
        \textfont2=\tensy \scriptfont2=\sevensy \scriptscriptfont2=\fivesy
        \textfont3=\tenex \scriptfont3=\tenex \scriptscriptfont3=\tenex
        \def\it{\fam\itfam\tenit}%
        \textfont\itfam=\tenit
        \def\sl{\fam\slfam\tensl}%
        \textfont\slfam=\tensl
        \def\bf{\fam\bffam\tenbf}%
        \textfont\bffam=\tenbf \scriptfont\bffam=\sevenbf
                \scriptscriptfont\bffam=\fivebf
        \def\tt{\fam\ttfam\tentt}%
        \textfont\ttfam=\tentt
        \normalbaselineskip=12pt%
        \let\sc=\eightrm        
        \setbox\strutbox=\hbox{\vrule height8.5pt depth3.5pt width0pt}%
        \normalbaselines\rm}
\def\ninepoint{\def\rm{\fam0\ninerm}%
        \textfont0=\ninerm \scriptfont0=\sixrm \scriptscriptfont0=\fiverm
        \textfont1=\ninei \scriptfont1=\sixi \scriptscriptfont1=\fivei
        \textfont2=\ninesy \scriptfont2=\sixsy \scriptscriptfont2=\fivesy
        \textfont3=\tenex \scriptfont3=\tenex \scriptscriptfont3=\tenex
        \def\it{\fam\itfam\nineit}%
        \textfont\itfam=\nineit
        \def\sl{\fam\slfam\ninesl}%
        \textfont\slfam=\ninesl
        \def\bf{\fam\bffam\ninebf}%
        \textfont\bffam=\ninebf \scriptfont\bffam=\sixbf
                \scriptscriptfont\bffam=\fivebf
        \def\tt{\fam\ttfam\ninett}%
        \textfont\ttfam=\ninett
        \normalbaselineskip=11pt%
        \let\sc=\sevenrm        
        \setbox\strutbox=\hbox{\vrule height8pt depth3pt width0pt}%
        \normalbaselines\rm}
\def\eightpoint{\def\rm{\fam0\eightrm}%
        \textfont0=\eightrm \scriptfont0=\sixrm \scriptscriptfont0=\fiverm
        \textfont1=\eighti \scriptfont1=\sixi \scriptscriptfont1=\fivei
        \textfont2=\eightsy \scriptfont2=\sixsy \scriptscriptfont2=\fivesy
        \textfont3=\tenex \scriptfont3=\tenex \scriptscriptfont3=\tenex
        \def\it{\fam\itfam\eightit}%
        \textfont\itfam=\eightit
        \def\sl{\fam\slfam\eightsl}%
        \textfont\slfam=\eightsl
        \def\bf{\fam\bffam\eightbf}%
        \textfont\bffam=\eightbf \scriptfont\bffam=\sixbf
                \scriptscriptfont\bffam=\fivebf
        \def\tt{\fam\ttfam\eighttt}%
        \textfont\ttfam=\eighttt
        \normalbaselineskip=9pt%
        \let\sc=\sixrm  
        \setbox\strutbox=\hbox{\vrule height7pt depth2pt width0pt}%
        \normalbaselines\rm}
\def\sevenpoint{\def\rm{\fam0\sevenrm}%
        \textfont0=\sevenrm \scriptfont0=\fiverm \scriptscriptfont0=\fiverm
        \textfont1=\seveni \scriptfont1=\fivei \scriptscriptfont1=\fivei
        \textfont2=\sevensy \scriptfont2=\fivesy \scriptscriptfont2=\fivesy
        \textfont3=\tenex \scriptfont3=\tenex \scriptscriptfont3=\tenex
        \def\it{\fam\itfam\sevenit}%
        \textfont\itfam=\sevenit
        \def\sl{\fam\slfam\sevensl}%
        \textfont\slfam=\sevensl
        \def\bf{\fam\bffam\sevenbf}%
        \textfont\bffam=\sevenbf \scriptfont\bffam=\fivebf
                \scriptscriptfont\bffam=\fivebf
        \def\tt{\fam\ttfam\seventt}%
        \textfont\ttfam=\seventt
        \normalbaselineskip=8pt%
        \let\sc=\fiverm  
        \setbox\strutbox=\hbox{\vrule height6pt depth2pt width0pt}%
        \normalbaselines\rm}
\font\smallsymbol = cmmi8
\newdimen\xfiglen \newdimen\yfiglen
\begin{document}
\title{
Regularising the Cauchy problem for Laplace's equation by fractional operators}
\author{Barbara Kaltenbacher\footnote{
Department of Mathematics,
Alpen-Adria-Universit\"at Klagenfurt.
barbara.kaltenbacher@aau.at.}
\and
William Rundell\footnote{
Department of Mathematics,
Texas A\&M University,
Texas 77843. 
rundell@tamu.edu}
}
\date{\vskip-3ex}
 \maketitle  
\leftline{\small \qquad\qquad
\textbf{{\textsc ams}} {\bf classification:} 35J25, 35R11, 35R30, 35R35, 65J20.}

\begin{abstract}
In this paper we revisit the classical Cauchy problem for Laplace's equation
 as well as two further related problems in the light of regularisation 
of this highly ill-conditioned problem
by replacing integer derivatives with fractional ones. 
We do so in the spirit of quasi reversibility, replacing a classically severely ill-posed PDE problem by a nearby well-posed or only mildly ill-posed one.
In order to be able to make use of the known stabilising effect of one-dimensional
fractional derivatives of Abel type we work in a particular
rectangular (in higher space dimensions cylindrical) geometry. 
We start with the plain Cauchy problem of reconstructing the values of a harmonic function inside this domain from its Dirichlet and Neumann trace on part of the boundary (the cylinder base) and explore three options for doing this with fractional operators.
The two other related problems are the recovery of a free boundary and then
this together with simultaneous recovery of the impedance function in the
boundary condition.
Our main technique here will be Newton's method.
The paper contains numerical reconstructions and convergence results for the devised methods. 

\end{abstract}

\setcounter{section}{-1}
\section{Introduction}


As its name suggests, the Cauchy Problem for Laplace's equation
has a long history.
By the early-middle of the nineteenth century it was known that
prescribing the values $u$ on the boundary $\partial\Omega$
of a domain $\Omega$ where $-\triangle u = 0$ held,
 allowed $u$ to be determined uniquely within $\Omega$.
There was a similar statement for ``flux'' or the value
of the normal derivative: the so-called Dirichlet and Neumann problems.
These problems held great significance for an enormous range of applications
evident at that time and provided solutions that depended continuously
on the boundary measurements.
It was also recognised that frequently a case would arise where part
of the boundary is inaccessible and no measurements could be made there.
In compensation one could measure both the value and the flux at the accessible
part: the Cauchy problem.
Since solutions of Laplace's equation can be considered as the real part
of an underlying analytic functions, analytic continuation still allowed
uniqueness of the solution.
However, the continuous dependence on the boundary data was lost;
in fact in an extreme way.

A famous reference to this state of affairs  dates from the beginning
of the twentieth century when Hadamard singled such problems
out as being ``incorrectly set'' and  hence unworthy of mathematical study,
as they had ``no physical significance.''
The backwards heat problem and the Cauchy problem were the prime exhibits,
\cite{Hadamard:1902,Hadamard:1907,Hadamard:1923}.

By the middle of the twentieth century such problems had shown to have
enormous physical significance and could not be ignored from any perspective.
Methods had to be found to overcome the severe ill-conditioning.
During this period the subject was extended to general inverse problems
and included a vast range of situations for which the inverse map is an
unbounded operator.
Examples of still foundational papers from this period including applications,
are \cite{Calderon:1958,Payne:1961,Payne:1966,Payne:1970}.

One of the popular techniques dating from this period is the
{\it method of quasi-reversibility\/}
of Lattes and Lions, \cite{LattesLions:1969}.
In this approach the original partial differential equation was replaced by one
in which the ``incorrect'' data allowed a well-posed recovery of its solution.
This new equation contained a parameter $\epsilon$ that allowed for
stable inversion for $\epsilon>0$ but, in addition (in a sense that had to be
carefully defined), solutions of the regularising equation converged to that
of the original as $\epsilon\to 0$.
It is now recognised that the original initial suggested choices of that time
brought with them new problematic issues either because of 
additional unnatural boundary conditions required or an operator
whose solutions behaved in a strongly different manner from the original
that offset any regularising amelioration that it offered.
Thus, the method came with a basic and significant challenge:
finding a ``closely-related''
partial differential equation, depending on a parameter $\epsilon$,
that could use the data in a well-posed manner for $\epsilon>0$
and also be such that its solutions converged to those of the original equation
as $\epsilon\to 0$.

In other words, the central issue in using quasi-reversibility
is in the choice of the regularising equation.
Here we will follow recent ideas for the backwards heat problem
and replace the usual derivative in the ``difficult'' direction
by a fractional derivative.
In the parabolic case this was a time fractional derivative and one
of the first papers taking this direction was \cite{LiuYamamoto:2010}.
It was later shown in \cite{JinRundell:2015} that the effectiveness of the
method and the choice of the fractional exponent used strongly
depended on both the final time $T$ and the frequency components in
the initial time function $u_0(x)$.
This led to the current authors proposing a multi-level version
with different fractional exponents depending on the frequencies in $u_0$,
which of course had also to appear in the measured final time and
were thus identifable, \cite{fracderiv}.
This ``split frequency'' method will also be used in the current
paper.

The advantage here is that such fractional operators arise naturally.
The diffusion equation in $(x,y)-$ coordinates results from a diffusive
process in which the underlying stochastic process arises form sampling
through a probabilty density function $\psi$.
If $\psi$ function has both finite mean and variance then it can be shown
that the long term limit approaches Brownian motion resulting in
classical derivatives.
This can be viewed as a direct result of the central limit theorem.
Allowing a finite mean but an infinite variance can lead directly to
fractional derivatives, \cite{BBB}.

However, while the use of time fractional derivatives and their behaviour
in resulting partial differential equations is now well-understood,
the same cannot be said to the same degree for the case of space
fractional derivatives.
This statement notwithstanding it can identify the standard derivative
as a limiting situation of one of fractional type.
This makes such an operator a natural candidate for a quasi-reversibility 
operator.

\smallskip
In our case if $\Omega$ is a rectangle with the top side inaccessible
but data can be measured on the other three sides then a basic problem
is to recover the solution $u(x,y)$ by also measuring the flux
$\frac{\partial u}{\partial y}$ at, say, the bottom edge.
In the usual language we have Dirichlet data on the two sides and Cauchy data
on the bottom.
We will consider this problem as Problem 1 in Section~\ref{sec:Cauchy1}.

However there is a further possibility: the top side may be a curve $\ell(x)$
and we also do not know this curve -- and want to do so.
This is a classical example of a free-boundary value problem and
a typical, and well studied, example here is of corrosion to a partly
inaccessible metal plate. 
This is our Problem 2 (cf. Section~\ref{sec:Cauchy2}).

In addition, while the original top side was, say,
a pure conductor or insulator  with either $u=0$ or
$\frac{\partial u}{\partial \nu}=0$ there,
this has now to be re-modelled as an impedance condition where the
impedance parameter is also likely unknown as a function of $x$.
Recovery of both the boundary curve and the impedance coefficient is the topic of  Problem 3 in Section~\ref{sec:Cauchy3}.

\medskip

Related to this are obstacle problems for elliptic problems in a domain $\Omega$
that  seek to recover an interior object $D$ from additional
boundary data.
This comes under this same classification, albeit with a different geometry.
The boundary of $D$ can be purely conductive, or purely insulating,
or satisfy an impedance condition with a perhaps unknown parameter.
The existing literature here is again extensive.
We mention the survey by Isakov, \cite{Isakov:2009} which includes not
only elliptic but also  parabolic and hyperbolic equation-based  problems.
Other significant papers from this time period are
\cite{Alessandrini:1997,KlibanovSantosa:1991,CakoniKress:2012,Rundell2008a,Bacchelli:2009};
a very recent overview on numerical methods for the Cauchy problem with further references can be found in \cite{BotchevKabanikhinShishleninTyrtyshnikov:2023}.

For our purposes we wish to take advantage of the geometry described earlier
where we are able, in some sense, to separate the variables and treat each of
the differential operator's components in a distinct manner.

\medskip
The stucture of the paper is as follows.
Each of the three sections~\ref{sec:Cauchy1}, \ref{sec:Cauchy2}, \ref{sec:Cauchy3} first of all contains a formulation of the problem along with the derivation of a reconstruction method and numerical reconstruction results. In Section~\ref{sec:Cauchy1}, this is a quasi reversibility approach based on fractional derivatives; in Sections~\ref{sec:Cauchy2}, \ref{sec:Cauchy3} dealing with nonlinear problems, these are regularised Newton type methods. Sections~\ref{sec:Cauchy1}, \ref{sec:Cauchy3} also contain convergence results. In particular, in Section~\ref{sec:Cauchy3} we verify a range invariance condition on the forward operator that allows us to prove convergence of a regularised frozen Newton method.

\section{Problem 1}\label{sec:Cauchy1}
\begin{problem}\label{Cauchy1}
Given $f$, $g$ in a region $\Omega$
\begin{equation}\label{eqn:Cauchyproblem}
\begin{aligned}
&-\triangle u=-\triangle_x u -\partial_y^2 u=0  \mbox{ in }\Omega\times(0,\olell)\\
&u(\cdot,y)=0\mbox{ on }\partial\Omega\times(0,\olell)\\
&u(x,0)=f(x)\,, \partial_y u(x,0)=g(x)\, \quad x\in\Omega
\end{aligned}
\end{equation}
find $u(x,y)$ in the whole cylinder $\Omega\times(0,\olell)$.
\end{problem}

This is a classical inverse problem going back to before Hadamard \cite{Hadamard:1923} and there exists a huge amount of literature on it.
For a recent review 
and further references, see, e.g., \cite{BotchevKabanikhinShishleninTyrtyshnikov:2023}.

Expansion of $u(\cdot,y)$, $f$, $g$ with respect to the eigenfunctions $\phi_j$ (with corresponding eigenvalues $\lambda_j$) of $-\triangle_x$ on $\Omega$ with homogeneous Dirichlet boundary conditions yields
\[
u(x,y)=\sum_{j=1}^\infty u_j(y) \phi_j(x)\,, \quad 
f(x)=\sum_{j=1}^\infty f_j \phi_j(x)\,, \quad 
g(x)=\sum_{j=1}^\infty g_j \phi_j(x)\,, \quad 
\]
where for all $j\in\mathbb{N}$
\begin{equation}\label{eqn:uj}
u_j''(y)-\lambda_j u_j(y)=0 \, \quad y\in(0,\olell)\,, \quad
u_j(0)=f_j\,, \quad u_j'(0)=g_j\,.
\end{equation}
Thus 
\begin{equation}\label{eqn:uellFourier}
u(x,y)=\sum_{j=1}^\infty a_j \phi_j(x)\,,
\end{equation}
where 
\begin{equation}\label{eqn:aj}
\begin{aligned}
a_j&=f_j\cosh(\sqrt{\lambda_j}y)+g_j\frac{\sinh(\sqrt{\lambda_j}y)}{\sqrt{\lambda_j}}\\
&=\frac{\sqrt{\lambda_j} f_j+g_j}{2\sqrt{\lambda_j}}\exp(\sqrt{\lambda_j}y)
+\frac{\sqrt{\lambda_j} f_j-g_j}{2\sqrt{\lambda_j}}\exp(-\sqrt{\lambda_j}y).
\end{aligned}
\end{equation}

Here the negative Laplacian $-\triangle_x$ on $\Omega$
with homogeneous Dirichlet boundary conditions can obviously be replaced by an
arbitrary symmetric positive definite operator acting on a Hilbert space, in
particular by an elliptic differential operator with possibly $x$ dependent
coefficients on a $d$-dimensional Lipschitz domain $\Omega$ with homogeneous
Dirichlet, Neumann or impedance boundary conditions.

\subsection{Regularisation by fractional differentiation}
Since the values of $u$ have to be propagated in the $y$ direction,
starting from the data $f$, $g$ at $y=0$, the reason for ill-posedness 
(as is clearly visible in the exponential amplification of noise in this data,
cf. \eqref{eqn:aj}), results from the $y$-derivative in the PDE. 
We thus consider several options of regularising Problem~\ref{Cauchy1} by replacing the second order derivative with respect to $y$ by a fractional one, in the spirit of quasi reversibility
\cite{AmesClark_etal:1998,ClarkOppenheimer:1994,LattesLions:1969,Showalter:1974a,Showalter:1974b,Showalter:1976}. We note in particular \cite[Sections 8.3, 10.1]{BBB} in the context of fractional derivatives.

In order to make use of integer (0th and 1st) order derivative data at $y=0$, we use the Djrbashian-Caputo (rather than the Riemann-Liouville) version of the
Abel fractional derivative.
This has a left- and a right-sided version defined by 
\[
{}_0D_y^{\beta} v = h_{2-\beta}*\partial_y^2 v, \quad 
\overline{{}_yD_{\olell}^{\beta} v}^{\olell} = h_{2-\beta}*\partial_y^2 \overline{v}^{\olell}
, \quad 
h_{2-\beta}(y)=\frac{1}{\Gamma(2-\beta)\, y^{\beta-1}}, \quad 
\overline{v}^{\olell}(y)=v(\olell-y)
\]
for $\beta\in(1,2)$, where $*$ denotes the (Laplace) convolution. Note that the Laplace transform of $h_{2-\beta}$ is given by $\widehat{h}_{2-\beta}(s)=s^{\beta-2}$.
Correspondingly, as solutions to initial value problems for fractional ODEs,
Mittag-Leffler functions, as defined by (see, e.g., \cite[Section 3.4]{BBB})
\begin{equation*}
  E_{\alpha,\beta}(z) = \sum_{k=0}^\infty \frac{z^k}{\Gamma(\alpha k+\beta)}\quad z\in \mathbb{C},
\end{equation*}
for $\alpha>0$, and $\beta\in\mathbb{R}$ will play a role.

While using the spectral decomposition 
\begin{equation}\label{eqn:ualpha}
u^{(\alpha)}(x,y)=\sum_{j=1}^\infty a^{(\alpha)}_j \phi_j(x)\,,
\end{equation}
to approximate \eqref{eqn:uellFourier}, \eqref{eqn:aj} in the analysis, the computational implementation does not need the eigenvalues and eigenfunctions of $-\triangle_x$ but relies on the numerical solution of fractional PDEs, for which efficient methods are available, see, e.g., \cite{Alikhanov:2015,JinLazarovZhou:2013,LanglandsHenry:2005,LinXu:2007}. 

\subsubsection*{Left-sided Djrbashian-Caputo fractional derivative} \label{subsec:leftDC}
Replacing $\partial_y^2$ by ${}_0D_y^{2\alpha}$ with $2\alpha\approx2$ 
amounts to considering, instead of \eqref{eqn:uj}, the fractional ODEs
\[
{}_0D_y^{2\alpha} u_j(y)-\lambda_j u_j(y)=0 \, \quad y\in(0,\olell)\,, \quad
u_j(0)=f_j\,, \quad u_j'(0)=g_j,
\]
whose solution by means of Mittag-Leffler functions (see, e.g., \cite[Theorem 5.4]{BBB})
yields
\begin{equation}\label{eqn:leftDC}
a_j^{(\alpha)}=f_jE_{2\alpha,1}(\lambda_j y^{2\alpha})+g_j y E_{2\alpha,2}(\lambda_j y^{2\alpha}).
\end{equation}
In view of the fact that $E_{2,1}(z) =\cosh(\sqrt{z})$ and $E_{2,2}(z) = \frac{\sinh \sqrt{z}}{\sqrt{z}}$, this is consistent with \eqref{eqn:aj}.

\subsubsection*{Right-sided Djrbashian-Caputo fractional derivative} \label{subsec:rightDC}

Replacing $\partial_y^2$ by ${}_yD_{\olell}^{2\alpha}$ with $2\alpha\approx2$ corresponds to replacing \eqref{eqn:uj} by 
\[
{}_yD_{\olell}^{2\alpha} u_j(y)-\lambda_j u_j(y)=0 \, \quad y\in(0,\olell)\,, \quad
u_j(0)=f_j\,, \quad u_j'(0)=g_j
\]
together with
\[
u_j(\olell)=\bar{a}_j\,, \quad u_j'(\olell)=\bar{b}_j\,.
\]
From the identity 
\[
{}_yD_{\olell}^{2\alpha} u_j(\olell-\eta)=(h_{2-2\alpha}*w_j'')(\eta)
\quad\mbox{ for } w_j(\eta)=u_j(\olell-\eta)\mbox{ and } \widehat{h}_{2-2\alpha}(s)=s^{2\alpha-2}
\]
we obtain the initial value problem
\[
(h_{2-2\alpha}*w_j'')(\eta)-\lambda_j w_j(\eta)=0\, \quad \eta\in(0,\olell)\,, \quad
w_j(0)=\bar{a}_j\,, \quad w_j'(0)=-\bar{b}_j\,.
\]
Taking Laplace transforms yields
\[
s^{2\alpha} \widehat{w}_j(s)-s^{2\alpha-1} \bar{a}_j+s^{2\alpha-2} \bar{b}_j-\lambda_j \widehat{w}_j(s)=0
\]
i.e.,
\begin{equation}\label{eqn:what}
\widehat{w}_j(s)=\frac{s^{2\alpha-1}}{s^{2\alpha}-\lambda_j} \bar{a}_j - \frac{s^{2\alpha-2}}{s^{2\alpha}-\lambda_j} \bar{b}_j,
\end{equation}
and for the derivative
\begin{equation}\label{eqn:wprimehat}
\widehat{w_j'}(s)= s\widehat{w}_j(s)-\bar{a}_j=
\frac{\lambda_j}{s^{2\alpha}-\lambda_j} \bar{a}_j - \frac{s^{2\alpha-1}}{s^{2\alpha}-\lambda_j} \bar{b}_j.
\end{equation}
From 
\cite[Lemma 4.12]{BBB}
we obtain
\[
\mathcal{L}(\eta^{k-1} E_{2\alpha,k}(\lambda \eta^{2\alpha}))(s)=\frac{s^{2\alpha-k}}{s^{2\alpha}-\lambda_j}\,, \ k\in\{1,2\}
\,, \quad 
\mathcal{L}(-\eta^{2\alpha-1} E_{2\alpha,2\alpha}(\lambda \eta^{2\alpha}))(s)=\frac{1}{s^{2\alpha}-\lambda_j}\,.
\]
Inserting this into \eqref{eqn:what}, \eqref{eqn:wprimehat} and evaluating at $\eta=\olell$ we obtain
\[
\begin{aligned}
f_j=& E_{2\alpha,1}(\lambda_j \olell^{2\alpha}) \bar{a}_j 
    - \olell E_{2\alpha,2}(\lambda_j \olell^{2\alpha}) \bar{b}_j\\
-g_j=&-\lambda_j \olell^{2\alpha-1} E_{2\alpha,2\alpha}(\lambda_j \olell^{2\alpha}) \bar{a}_j 
    - E_{2\alpha,1}(\lambda_j \olell^{2\alpha}) \bar{b}_j\,.
\end{aligned}
\]
Resolving for $\bar{a}_j$ and replacing $\olell$ by $y$ we get
\begin{equation}\label{eqn:rightDC}
a_j^{(\alpha)}=\frac{f_jE_{2\alpha,1}(\lambda_j y^{2\alpha})+g_j y E_{2\alpha,2}(\lambda_j y^{2\alpha})}{
\bigl(E_{2\alpha,1}(\lambda_j y^{2\alpha})\bigr)^2-\lambda_j y^{2\alpha} E_{2\alpha,2\alpha}(\lambda_j y^{2\alpha}) E_{2\alpha,2}(\lambda_j y^{2\alpha})}.
\end{equation}

\subsubsection*{Factorisation of the Laplacian}\label{subsec:facLap}
An analysis of the two one-sided fractional approximations of $\partial_y^2$ does not seem to be possible since it would require a stability estimate for Mittag-Leffler functions with positive argument and index close to two.
While convergence from below of the fractional to the integer derivative holds at any integer (thus also second) order, a stability estimate is not available. Therefore we look for a possibility to reduce the problem to one with first order $y$ derivatives (and treat the inverse problem similarly to a backwards heat problem to take advantage of recent work in this direction \cite{fracderiv,BBB}). One way to do so is to factorise the negative Laplacian so that the Cauchy problem becomes:\hfill\break
Given $f$, $g$ in
\[
\begin{aligned}
&-\triangle u=-\triangle_x u-\partial_y^2 u=(\partial_y -\sqrt{-\triangle_x})(-\partial_y -\sqrt{-\triangle_x})u=0  \mbox{ in }\Omega\times(0,\olell)\\
&u(\cdot,y)=0\mbox{ on }\partial\Omega\times(0,\olell)\\
&u(x,0)=f(x)\,, \partial_y u(x,0)=g(x)\, \quad x\in\Omega
\end{aligned}
\]
find $u(x,y)$.\hfill\break
More precisely, with
$u_\pm= \frac12(u\pm\sqrt{-\triangle_x}^{\,-1}\partial_yu)$
we get the representation
\[
u=u_++u_-
\] 
where $u_+$, $u_-$ can be obtained as solutions to the subproblems
\begin{equation}\label{eqn:uplus}
\begin{aligned}
&\partial_y u_+-\sqrt{-\triangle_x} u_+=0  \mbox{ in }\Omega\times(0,\olell)\\
&u_+(\cdot,y)=0\mbox{ on }\partial\Omega\times(0,\olell) 
\\
&u_+(x,0)=\tfrac12(f(x)+\sqrt{-\triangle_x}^{\,-1}g(x))=:u_{+0}(x)\, \quad x\in\Omega
\end{aligned}
\end{equation}
and 
\begin{equation}\label{eqn:uminus}
\begin{aligned}
&\partial_y u_-+\sqrt{-\triangle_x} u_-=0  \mbox{ in }\Omega\times(0,\olell)\\
&u_-(\cdot,y)=0\mbox{ on }\partial\Omega\times(0,\olell)
\\
&u_-(x,0)=\tfrac12(f(x)-\sqrt{-\triangle_x}^{\,-1}g(x))\, \quad x\in\Omega\,.
\end{aligned}
\end{equation}
In fact, it is readily checked that if $u_\pm$ solve \eqref{eqn:uplus}, \eqref{eqn:uminus} then $u=u_++u_-$ solves \eqref{eqn:Cauchyproblem}.
The numerical solution of the initial value problem \eqref{eqn:uminus} and of the final value problem for the PDE in \eqref{eqn:uplus} can be stably and efficiently carried out combining an implicit time stepping scheme with methods recently developed for the solution of PDEs with fractional powers of the Laplacian.
See, e.g., \cite{Bonitoetal:2018,BonitoLeiPasciak:2019,HarizanovLazarovMargenov:2020}. 

Since $\sqrt{-\triangle_x}$ is positive definite, the second equation \eqref{eqn:uminus} is well-posed, so there is no need to regularise.
The first one \eqref{eqn:uplus} after the change of variables $t=\olell-y$, $w_+(\olell-t)=u_+(y)$
becomes a backwards heat equation (but with $\sqrt{-\triangle}$ in place of $-\triangle$)
\begin{equation}\label{eqn:wplus}
\begin{aligned}
&\partial_t w_++\sqrt{-\triangle_x} w_+=0  \mbox{ in }\Omega\times(0,\olell)\\
&w_+(\cdot,t)=0\mbox{ on }\partial\Omega\times(0,\olell)
\\
&w_+(x,\olell)=\tfrac12(f(x)+\sqrt{-\triangle_x}^{\,-1}g(x))\, \quad x\in\Omega
\end{aligned}
\end{equation}
Regularizing \eqref{eqn:wplus} by using in place of $\partial_t$ a fractional
``time'' derivative ${}_0D_t^{\alpha}$  with ${\alpha}\approx 1$, $\alpha<1$ (while leaving \eqref{eqn:uminus} unregularised) amounts to setting
\begin{equation}\label{eqn:facLap}
a_j^{\alpha}=\frac{\sqrt{\lambda_j} f_j+g_j}{2\sqrt{\lambda_j}}\frac{1}{E_{\alpha,1}(-\sqrt{\lambda_j}y^{\alpha})}
+\frac{\sqrt{\lambda_j} f_j-g_j}{2\sqrt{\lambda_j}}\exp(-\sqrt{\lambda_j}y)\,.
\end{equation}

This approach can be refined by splitting the frequency range $(\lambda_j)_{j\in\mathbb{N}}$ into subsets 
$(\{\lambda_{K_i+1},\ldots,\lambda_{K_{i+1}}\})_{i\in\mathbb{N}}$ and choosing the breakpoint $K_i$ as well as the fractional order $\alpha_i$ for each of these subsets according to a discrepancy principle. For details, see \cite{fracderiv}.

\medskip

\subsection{Reconstructions}\label{sec:reconCauchy1}
In this section we compare reconstructions with the three options \eqref{eqn:leftDC}, \eqref{eqn:rightDC}, \eqref{eqn:facLap}.
The latter was refined by the split frequency approach from \cite{fracderiv} using the discrepancy principle for determining the breakpoints and differentiation orders.
While this method is backed up by convergence theory,
the same does not hold true for the options \eqref{eqn:leftDC} and
\eqref{eqn:rightDC}.
Indeed, not even stability can be expected to hold from the known behaviour of
the Mittag Leffler functions with positive argument, in particular for \eqref{eqn:leftDC}.
This becomes visible in the reconstructions in Figure~\ref{fig:Cauchy1}.  
The differentiation orders for \eqref{eqn:leftDC}, \eqref{eqn:rightDC} were taken as the smallest (thus most stable) ones obtained in \eqref{eqn:facLap}.

\xfiglen=1.5 true in
\yfiglen=1.2true in
\newbox\figurelegendCauchyone
\newbox\figboxten
\newbox\figboxtt
\newbox\figboxss

\setbox\figboxten=\vbox{\hsize=\xfiglen  
\beginpicture
  \setcoordinatesystem units <\xfiglen,\yfiglen>  point at 0.0 0.0
  \setplotarea x from 0 to 1, y from 0.0 to 1.5
\relax
\scriptsize
  \axis bottom ticks short numbered from 0 to 1 by 0.2 /
  \axis left ticks short numbered from 0 to 1.5 by 0.5 /
\setquadratic
\setsolid
\linethickness=0.6pt
\Black{\relax 
\plot
         0         0
    0.0200    0.1629
    0.0400    0.3213
    0.0600    0.4712
    0.0800    0.6092
    0.1000    0.7331
    0.1200    0.8419
    0.1400    0.9357
    0.1600    1.0156
    0.1800    1.0836
    0.2000    1.1420
    0.2200    1.1932
    0.2400    1.2391
    0.2600    1.2815
    0.2800    1.3210
    0.3000    1.3576
    0.3200    1.3906
    0.3400    1.4190
    0.3600    1.4414
    0.3800    1.4566
    0.4000    1.4638
    0.4200    1.4629
    0.4400    1.4545
    0.4600    1.4402
    0.4800    1.4222
    0.5000    1.4030
    0.5200    1.3853
    0.5400    1.3717
    0.5600    1.3639
    0.5800    1.3629
    0.6000    1.3686
    0.6200    1.3795
    0.6400    1.3932
    0.6600    1.4065
    0.6800    1.4155
    0.7000    1.4164
    0.7200    1.4055
    0.7400    1.3798
    0.7600    1.3375
    0.7800    1.2777
    0.8000    1.2009
    0.8200    1.1085
    0.8400    1.0031
    0.8600    0.8875
    0.8800    0.7648
    0.9000    0.6379
    0.9200    0.5093
    0.9400    0.3806
    0.9600    0.2528
    0.9800    0.1260
    1.0000    0.0000
/\relax}\relax
\linethickness=0.6pt
\setdashpattern <5pt, 2pt, 3pt, 2pt>
\OliveGreen{\relax
\plot   
         0         0
    0.0200    0.1629
    0.0400    0.3213
    0.0600    0.4711
    0.0800    0.6091
    0.1000    0.7331
    0.1200    0.8419
    0.1400    0.9358
    0.1600    1.0157
    0.1800    1.0838
    0.2000    1.1422
    0.2200    1.1934
    0.2400    1.2393
    0.2600    1.2816
    0.2800    1.3210
    0.3000    1.3575
    0.3200    1.3905
    0.3400    1.4188
    0.3600    1.4411
    0.3800    1.4562
    0.4000    1.4633
    0.4200    1.4624
    0.4400    1.4541
    0.4600    1.4399
    0.4800    1.4220
    0.5000    1.4029
    0.5200    1.3854
    0.5400    1.3720
    0.5600    1.3644
    0.5800    1.3636
    0.6000    1.3693
    0.6200    1.3803
    0.6400    1.3941
    0.6600    1.4073
    0.6800    1.4163
    0.7000    1.4171
    0.7200    1.4060
    0.7400    1.3802
    0.7600    1.3378
    0.7800    1.2778
    0.8000    1.2008
    0.8200    1.1084
    0.8400    1.0029
    0.8600    0.8872
    0.8800    0.7645
    0.9000    0.6376
    0.9200    0.5090
    0.9400    0.3803
    0.9600    0.2526
    0.9800    0.1260
    1.0000    0.0000
 /\relax}\relax
\setdashes <4pt>
\Red{\relax
\plot 
         0         0
    0.0200    0.1718
    0.0400    0.3386
    0.0600    0.4958
    0.0800    0.6399
    0.1000    0.7684
    0.1200    0.8802
    0.1400    0.9754
    0.1600    1.0554
    0.1800    1.1225
    0.2000    1.1792
    0.2200    1.2282
    0.2400    1.2721
    0.2600    1.3124
    0.2800    1.3501
    0.3000    1.3851
    0.3200    1.4168
    0.3400    1.4439
    0.3600    1.4647
    0.3800    1.4780
    0.4000    1.4830
    0.4200    1.4793
    0.4400    1.4679
    0.4600    1.4503
    0.4800    1.4291
    0.5000    1.4072
    0.5200    1.3876
    0.5400    1.3730
    0.5600    1.3657
    0.5800    1.3666
    0.6000    1.3755
    0.6200    1.3910
    0.6400    1.4103
    0.6600    1.4297
    0.6800    1.4449
    0.7000    1.4515
    0.7200    1.4454
    0.7400    1.4233
    0.7600    1.3830
    0.7800    1.3236
    0.8000    1.2455
    0.8200    1.1505
    0.8400    1.0413
    0.8600    0.9210
    0.8800    0.7932
    0.9000    0.6610
    0.9200    0.5272
    0.9400    0.3936
    0.9600    0.2612
    0.9800    0.1302
    1.0000    0.0000
 /\relax}\relax
\setdots <2pt>
\Blue{\relax
\plot 
         0         0
    0.0200    0.1940
    0.0400    0.3807
    0.0600    0.5538
    0.0800    0.7080
    0.1000    0.8404
    0.1200    0.9498
    0.1400    1.0374
    0.1600    1.1059
    0.1800    1.1595
    0.2000    1.2029
    0.2200    1.2406
    0.2400    1.2765
    0.2600    1.3133
    0.2800    1.3520
    0.3000    1.3920
    0.3200    1.4314
    0.3400    1.4673
    0.3600    1.4963
    0.3800    1.5153
    0.4000    1.5222
    0.4200    1.5159
    0.4400    1.4973
    0.4600    1.4685
    0.4800    1.4333
    0.5000    1.3965
    0.5200    1.3632
    0.5400    1.3380
    0.5600    1.3248
    0.5800    1.3257
    0.6000    1.3409
    0.6200    1.3685
    0.6400    1.4045
    0.6600    1.4434
    0.6800    1.4788
    0.7000    1.5040
    0.7200    1.5129
    0.7400    1.5005
    0.7600    1.4637
    0.7800    1.4015
    0.8000    1.3149
    0.8200    1.2069
    0.8400    1.0820
    0.8600    0.9456
    0.8800    0.8030
    0.9000    0.6592
    0.9200    0.5178
    0.9400    0.3813
    0.9600    0.2503
    0.9800    0.1238
    1.0000    0.0000
 /\relax}\relax
\endpicture
}

\setbox\figboxtt=\vbox{\hsize=\xfiglen  
\beginpicture
  \setcoordinatesystem units <\xfiglen,\yfiglen>  point at 0.0 0.0
  \setplotarea x from 0 to 1, y from 0.0 to 1.5
\relax
\scriptsize
  \axis bottom ticks short numbered from 0 to 1 by 0.2 /
  \axis left ticks short numbered from 0 to 1.5 by 0.5 /
\setquadratic
\setsolid
\linethickness=0.6pt
\Black{\relax 
\plot
         0         0
    0.0200    0.1272
    0.0400    0.2521
    0.0600    0.3726
    0.0800    0.4869
    0.1000    0.5936
    0.1200    0.6919
    0.1400    0.7812
    0.1600    0.8617
    0.1800    0.9336
    0.2000    0.9977
    0.2200    1.0547
    0.2400    1.1053
    0.2600    1.1502
    0.2800    1.1898
    0.3000    1.2243
    0.3200    1.2540
    0.3400    1.2787
    0.3600    1.2983
    0.3800    1.3129
    0.4000    1.3225
    0.4200    1.3275
    0.4400    1.3284
    0.4600    1.3259
    0.4800    1.3211
    0.5000    1.3149
    0.5200    1.3083
    0.5400    1.3021
    0.5600    1.2969
    0.5800    1.2928
    0.6000    1.2895
    0.6200    1.2861
    0.6400    1.2816
    0.6600    1.2743
    0.6800    1.2626
    0.7000    1.2448
    0.7200    1.2191
    0.7400    1.1843
    0.7600    1.1394
    0.7800    1.0840
    0.8000    1.0182
    0.8200    0.9423
    0.8400    0.8573
    0.8600    0.7645
    0.8800    0.6651
    0.9000    0.5606
    0.9200    0.4522
    0.9400    0.3412
    0.9600    0.2283
    0.9800    0.1144
    1.0000    0.0000
/\relax}\relax
\linethickness=0.6pt
\setdashpattern <5pt, 2pt, 3pt, 2pt>
\OliveGreen{\relax
\plot   
         0         0
    0.0200    0.1272
    0.0400    0.2521
    0.0600    0.3726
    0.0800    0.4869
    0.1000    0.5936
    0.1200    0.6919
    0.1400    0.7812
    0.1600    0.8617
    0.1800    0.9337
    0.2000    0.9978
    0.2200    1.0548
    0.2400    1.1054
    0.2600    1.1502
    0.2800    1.1898
    0.3000    1.2243
    0.3200    1.2539
    0.3400    1.2786
    0.3600    1.2982
    0.3800    1.3128
    0.4000    1.3224
    0.4200    1.3274
    0.4400    1.3283
    0.4600    1.3258
    0.4800    1.3210
    0.5000    1.3148
    0.5200    1.3083
    0.5400    1.3022
    0.5600    1.2971
    0.5800    1.2930
    0.6000    1.2897
    0.6200    1.2864
    0.6400    1.2818
    0.6600    1.2746
    0.6800    1.2628
    0.7000    1.2450
    0.7200    1.2193
    0.7400    1.1844
    0.7600    1.1395
    0.7800    1.0841
    0.8000    1.0182
    0.8200    0.9423
    0.8400    0.8573
    0.8600    0.7644
    0.8800    0.6651
    0.9000    0.5605
    0.9200    0.4522
    0.9400    0.3411
    0.9600    0.2283
    0.9800    0.1144
    1.0000    0.0000
 /\relax}\relax
\setdashes <4pt>
\Red{\relax
\plot 
         0         0
    0.0200    0.1291
    0.0400    0.2558
    0.0600    0.3779
    0.0800    0.4934
    0.1000    0.6010
    0.1200    0.6996
    0.1400    0.7890
    0.1600    0.8692
    0.1800    0.9407
    0.2000    1.0043
    0.2200    1.0607
    0.2400    1.1107
    0.2600    1.1552
    0.2800    1.1945
    0.3000    1.2290
    0.3200    1.2586
    0.3400    1.2833
    0.3600    1.3029
    0.3800    1.3173
    0.4000    1.3266
    0.4200    1.3311
    0.4400    1.3313
    0.4600    1.3280
    0.4800    1.3224
    0.5000    1.3154
    0.5200    1.3082
    0.5400    1.3016
    0.5600    1.2964
    0.5800    1.2926
    0.6000    1.2899
    0.6200    1.2876
    0.6400    1.2843
    0.6600    1.2784
    0.6800    1.2682
    0.7000    1.2516
    0.7200    1.2271
    0.7400    1.1931
    0.7600    1.1486
    0.7800    1.0932
    0.8000    1.0269
    0.8200    0.9503
    0.8400    0.8644
    0.8600    0.7705
    0.8800    0.6699
    0.9000    0.5643
    0.9200    0.4549
    0.9400    0.3430
    0.9600    0.2294
    0.9800    0.1149
    1.0000    0.0000
 /\relax}\relax
\setdots <2pt>
\Blue{\relax
\plot 
         0         0
    0.0200    0.1302
    0.0400    0.2580
    0.0600    0.3809
    0.0800    0.4971
    0.1000    0.6051
    0.1200    0.7040
    0.1400    0.7933
    0.1600    0.8733
    0.1800    0.9444
    0.2000    1.0075
    0.2200    1.0635
    0.2400    1.1133
    0.2600    1.1575
    0.2800    1.1967
    0.3000    1.2312
    0.3200    1.2608
    0.3400    1.2856
    0.3600    1.3052
    0.3800    1.3196
    0.4000    1.3288
    0.4200    1.3330
    0.4400    1.3329
    0.4600    1.3292
    0.4800    1.3230
    0.5000    1.3155
    0.5200    1.3078
    0.5400    1.3010
    0.5600    1.2956
    0.5800    1.2920
    0.6000    1.2897
    0.6200    1.2879
    0.6400    1.2854
    0.6600    1.2804
    0.6800    1.2711
    0.7000    1.2554
    0.7200    1.2315
    0.7400    1.1979
    0.7600    1.1537
    0.7800    1.0983
    0.8000    1.0317
    0.8200    0.9546
    0.8400    0.8681
    0.8600    0.7735
    0.8800    0.6722
    0.9000    0.5660
    0.9200    0.4560
    0.9400    0.3437
    0.9600    0.2298
    0.9800    0.1151
    1.0000    0.0000
 /\relax}\relax
\endpicture
}

\setbox\figboxss=\vbox{\hsize=\xfiglen  
\beginpicture
  \setcoordinatesystem units <\xfiglen,\yfiglen>  point at 0.0 0.0
  \setplotarea x from 0 to 1, y from 0.0 to 1.5
\relax
\scriptsize
  \axis bottom ticks short numbered from 0 to 1 by 0.2 /
  \axis left ticks short numbered from 0 to 1.5 by 0.5 /
\setquadratic
\setsolid
\linethickness=0.6pt
\Black{\relax 
\plot
         0         0
    0.0200    0.1413
    0.0400    0.2795
    0.0600    0.4118
    0.0800    0.5360
    0.1000    0.6503
    0.1200    0.7537
    0.1400    0.8459
    0.1600    0.9274
    0.1800    0.9991
    0.2000    1.0622
    0.2200    1.1179
    0.2400    1.1675
    0.2600    1.2119
    0.2800    1.2516
    0.3000    1.2870
    0.3200    1.3178
    0.3400    1.3437
    0.3600    1.3642
    0.3800    1.3788
    0.4000    1.3874
    0.4200    1.3901
    0.4400    1.3877
    0.4600    1.3811
    0.4800    1.3717
    0.5000    1.3611
    0.5200    1.3508
    0.5400    1.3422
    0.5600    1.3362
    0.5800    1.3334
    0.6000    1.3332
    0.6200    1.3349
    0.6400    1.3367
    0.6600    1.3366
    0.6800    1.3320
    0.7000    1.3205
    0.7200    1.2997
    0.7400    1.2678
    0.7600    1.2234
    0.7800    1.1659
    0.8000    1.0956
    0.8200    1.0133
    0.8400    0.9203
    0.8600    0.8185
    0.8800    0.7098
    0.9000    0.5961
    0.9200    0.4792
    0.9400    0.3604
    0.9600    0.2405
    0.9800    0.1203
    1.0000    0.0000
/\relax}\relax
%
\setdashpattern <5pt, 2pt, 3pt, 2pt>
\OliveGreen{\relax
\plot   
         0         0
    0.0200    0.1413
    0.0400    0.2795
    0.0600    0.4118
    0.0800    0.5360
    0.1000    0.6503
    0.1200    0.7537
    0.1400    0.8460
    0.1600    0.9275
    0.1800    0.9992
    0.2000    1.0623
    0.2200    1.1180
    0.2400    1.1676
    0.2600    1.2119
    0.2800    1.2517
    0.3000    1.2870
    0.3200    1.3178
    0.3400    1.3436
    0.3600    1.3640
    0.3800    1.3785
    0.4000    1.3871
    0.4200    1.3899
    0.4400    1.3875
    0.4600    1.3809
    0.4800    1.3716
    0.5000    1.3611
    0.5200    1.3508
    0.5400    1.3423
    0.5600    1.3365
    0.5800    1.3337
    0.6000    1.3337
    0.6200    1.3354
    0.6400    1.3373
    0.6600    1.3371
    0.6800    1.3325
    0.7000    1.3209
    0.7200    1.3000
    0.7400    1.2680
    0.7600    1.2235
    0.7800    1.1660
    0.8000    1.0956
    0.8200    1.0132
    0.8400    0.9202
    0.8600    0.8184
    0.8800    0.7096
    0.9000    0.5960
    0.9200    0.4791
    0.9400    0.3602
    0.9600    0.2404
    0.9800    0.1203
    1.0000    0.0000
 /\relax}\relax
\setdashes <4pt>
\Red{\relax
\plot 
         0         0
    0.0200    0.1471
    0.0400    0.2908
    0.0600    0.4278
    0.0800    0.5557
    0.1000    0.6725
    0.1200    0.7773
    0.1400    0.8699
    0.1600    0.9508
    0.1800    1.0212
    0.2000    1.0826
    0.2200    1.1366
    0.2400    1.1847
    0.2600    1.2278
    0.2800    1.2668
    0.3000    1.3017
    0.3200    1.3322
    0.3400    1.3579
    0.3600    1.3781
    0.3800    1.3921
    0.4000    1.3996
    0.4200    1.4009
    0.4400    1.3965
    0.4600    1.3876
    0.4800    1.3758
    0.5000    1.3629
    0.5200    1.3507
    0.5400    1.3411
    0.5600    1.3350
    0.5800    1.3331
    0.6000    1.3350
    0.6200    1.3397
    0.6400    1.3453
    0.6600    1.3494
    0.6800    1.3491
    0.7000    1.3416
    0.7200    1.3242
    0.7400    1.2946
    0.7600    1.2514
    0.7800    1.1940
    0.8000    1.1226
    0.8200    1.0381
    0.8400    0.9422
    0.8600    0.8371
    0.8800    0.7249
    0.9000    0.6079
    0.9200    0.4879
    0.9400    0.3663
    0.9600    0.2442
    0.9800    0.1221
    1.0000    0.0000
 /\relax}\relax
\setdots <2pt>
\Blue{\relax
\plot 
         0         0
    0.0200    0.1537
    0.0400    0.3034
    0.0600    0.4454
    0.0800    0.5767
    0.1000    0.6953
    0.1200    0.8001
    0.1400    0.8913
    0.1600    0.9697
    0.1800    1.0370
    0.2000    1.0952
    0.2200    1.1463
    0.2400    1.1922
    0.2600    1.2342
    0.2800    1.2729
    0.3000    1.3086
    0.3200    1.3405
    0.3400    1.3678
    0.3600    1.3894
    0.3800    1.4042
    0.4000    1.4117
    0.4200    1.4119
    0.4400    1.4052
    0.4600    1.3932
    0.4800    1.3777
    0.5000    1.3610
    0.5200    1.3456
    0.5400    1.3334
    0.5600    1.3262
    0.5800    1.3246
    0.6000    1.3286
    0.6200    1.3369
    0.6400    1.3473
    0.6600    1.3569
    0.6800    1.3622
    0.7000    1.3599
    0.7200    1.3467
    0.7400    1.3200
    0.7600    1.2780
    0.7800    1.2201
    0.8000    1.1466
    0.8200    1.0588
    0.8400    0.9588
    0.8600    0.8492
    0.8800    0.7328
    0.9000    0.6122
    0.9200    0.4895
    0.9400    0.3663
    0.9600    0.2436
    0.9800    0.1215
    1.0000    0.0000
 /\relax}\relax
\endpicture
}

\newbox\figurelegendone
\setbox\figurelegendone=\hbox{
\beginpicture
  \setcoordinatesystem units <0.06\xfiglen,0.05\yfiglen> 
  \setplotarea x from 0 to 3, y from 1 to 4
\footnotesize
\linethickness=0.5pt
\scriptsize
   \setsolid
   \Black{\relax \putrule from 0 4 to 1 4 \relax}\relax
   \put {$y$ actual}  [l] at 1.2 4
   \setdashes <4pt>
   \Red{\relax \putrule from 0 3 to 1 3 \relax}\relax
   \put {right der} [l] at 1.2 3
   setdots <2pt>
   \Blue{\relax \putrule from 0 2 to 1 2 \relax}\relax
   \put {left der } [l] at 1.2 2
   \setdashpattern <5pt, 2pt, 3pt, 2pt>
   \OliveGreen{\relax \putrule from 0 1 to 1 1 \relax}\relax
   \put {fac Lapl} [l] at 1.2 1
\endpicture
}

\begin{figure}[ht]
\includegraphics[width=\textwidth]{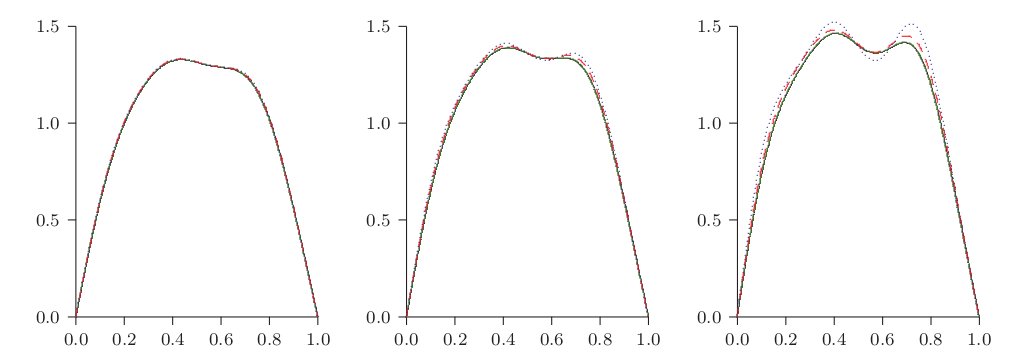}
\caption{\small Reconstructions from formulas \eqref{eqn:leftDC} (blue dotted),
\eqref{eqn:rightDC} (red, dashed) and \eqref{eqn:facLap} (green, irregularly dashed) as compared to the actual value (black, solid) from data with 1 per cent noise at different distances from the Cauchy data boundary.
Left: $y=\frac{1}{3}$,
Middle: $y=\frac{2}{3}$,
Right: $y=1$.
\label{fig:Cauchy1}}
\end{figure}

The relative $L^2$ errors over the whole domain were 
$0.0275$ for \eqref{eqn:leftDC},
$0.0135$ for \eqref{eqn:rightDC},
$1.8597\cdot 10^{-4}$ for \eqref{eqn:facLap}.

The split frequency factorised Laplace approach also worked well with much higher noise levels, as can be seen in Sections~\ref{sec:reconCauchy2} and \ref{sec:reconCauchy3}, where it is intrinsically used as part of the reconstruction algorithm.
On the other hand, \eqref{eqn:leftDC}, \eqref{eqn:rightDC} failed to provide reasonable reconstructions at higher noise levels, which is to be expected from a theoretical point of view.

\subsection{Convergence 
of the scheme \eqref{eqn:facLap}}\label{sec:convCauchy1}
As usual in regularisation methods, the error can be decomposed as 
\begin{equation}\label{eqn:totalerror}
\|u-u^{\alpha,\delta}\|_X\leq \|u-u^{(\alpha)}\|_X+\|u^{(\alpha)}-u^{\alpha,\delta}\|_X
\end{equation}
where $u^{\alpha,\delta}$ is the actual reconstruction from noisy data $f^\delta$, $g^\delta$ leading to 
\begin{equation}\label{eqn:delta}
u_{+0}^\delta = \tfrac12(f^\delta(x)+\sqrt{-\triangle_x}^{\,-1}g^\delta(x))
\text{ with }\|u_{+0}^\delta-u_{+0}\|_Y\leq\delta .
\end{equation}
Using $X=L^2(0,\olell;H^\sigma(\Omega))$ as the space for the sought-after function $u$, we can write the approximation error as 
\begin{equation}\label{eqn:approxerr}
\begin{aligned}
&\|u-u^{(\alpha)}\|_{L^2(0,\olell;H^\sigma(\Omega))} 
=\Bigl(\int_0^{\olell}\sum_{j=1}^\infty \lambda_j^\sigma\, |u_j(y)-u^{(\alpha)}_j(y)|^2\, dy \Bigr)^{1/2}\\
&=\Bigl(  \int_0^{\olell}\sum_{j=1}^\infty |u_{+,j}(y)|^2\, \lambda_j^\sigma \,
\frac{1}{E_{\alpha,1}(-\sqrt{\lambda_j} y^\alpha)^2} 
\left|E_{\alpha,1}(-\sqrt{\lambda_j} y^\alpha)-\exp(-\sqrt{\lambda_j} y)\right|^2 dy   \Bigr)^{1/2}
\end{aligned}
\end{equation}
where $u^\dagger_{+,j}(y)=u_{+0,j} \exp(\sqrt{\lambda_j} y)$,
and the propagated noise term as
\begin{equation}\label{eqn:propnoise}
\begin{aligned}
\|u^{(\alpha)}-u^{\alpha,\delta}\|_{L^2(0,\olell;H^\sigma(\Omega))} 
&=\Bigl(\int_0^{\olell}\sum_{j=1}^\infty \lambda_j^\sigma\, |u^{(\alpha)}_j(y)-u^{\alpha,\delta}_j(y)|^2\, dy \Bigr)^{1/2}\\
&=\Bigl(   \int_0^{\olell}\sum_{j=1}^\infty |u_{+0,j}-u_{+0,j}^\delta|^2\, \lambda_j^\sigma\, \frac{1}{E_{\alpha,1}(-\sqrt{\lambda_j} y^\alpha)^2}  dy   \Bigr)^{1/2}.
\end{aligned}
\end{equation}

In view of \eqref{eqn:wplus}, convergence follows similarly to corresponding results on regularisation by backwards subdiffusion \cite[Section 10.1.3]{BBB} using two fundamental lemmata 
\begin{itemize}
\item
on stability, estimating $\frac{1}{E_{\alpha,1}(-\lambda y^\alpha)}$ (Lemma~\ref{lem:mlf-stability_est}) and 
\item
on convergence, estimating $\left|E_{\alpha,1}(-\sqrt{\lambda_j} y^\alpha)-\exp(-\sqrt{\lambda_j} y)\right|$ (Lemma~\ref{lem:rateE})
\end{itemize}
that we here re-prove to track dependence of constants on the final ``time'' $\TT$. This is important in view of the fact that as opposed to \cite[Section 10.1.3]{BBB}, we consider a range of ``final time values'', that is, in our context, $y$ values.
In the following $\llam$ serves as a placeholder for $\sqrt{\lambda_j}$.

\begin{lemma}\label{lem:mlf-stability_est}
For all $\alpha\in(0,1)$,
\begin{equation}\label{eqn:stability_est}
\frac{1}{E_{\alpha,1}(-\llam y^\alpha)}\leq 
1+\Gamma(1-\alpha) \llam y^\alpha\,
\end{equation}
\end{lemma}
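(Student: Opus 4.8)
The plan is to prove the equivalent lower bound $E_{\alpha,1}(-z)\ge\frac{1}{1+\Gamma(1-\alpha)z}$ in the single scalar variable $z=\llam y^\alpha\ge0$; this is equivalent to the stated estimate since $E_{\alpha,1}(-z)>0$ for $\alpha\in(0,1)$, $z\ge0$. Before estimating anything I would record two boundary facts that pin down the constant. At $z=0$ both sides equal $1$, while by the standard large-argument asymptotics $E_{\alpha,1}(-z)=\frac{1}{\Gamma(1-\alpha)z}+O(z^{-2})$ as $z\to\infty$ both sides behave like $\frac{1}{\Gamma(1-\alpha)z}$. Thus $\Gamma(1-\alpha)$ is forced: it is the unique constant for which the elementary comparison function matches $E_{\alpha,1}(-z)$ at both ends, and any smaller constant must fail for large $z$. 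This is also where the $y$-dependence (through $z=\llam y^\alpha$) enters transparently, which is the point of re-proving the estimate.

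The tool I would use is complete monotonicity. For $\alpha\in(0,1)$ the map $z\mapsto E_{\alpha,1}(-z)$ is completely monotone, so by Bernstein's theorem it admits a representation $E_{\alpha,1}(-z)=\int_0^\infty e^{-zr}M_\alpha(r)\,dr$ with a probability density $M_\alpha\ge0$ (the M-Wright/Mainardi function), whose value at the origin is exactly $M_\alpha(0)=\frac{1}{\Gamma(1-\alpha)}$. The comparison function is itself a Laplace transform, $\frac{1}{1+\Gamma(1-\alpha)z}=\int_0^\infty e^{-zr}p(r)\,dr$ with $p(r)=\frac{1}{\Gamma(1-\alpha)}\exp(-r/\Gamma(1-\alpha))$, the exponential density having the same total mass $1$ and, crucially, the same value $p(0)=\frac{1}{\Gamma(1-\alpha)}$ at the origin. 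The inequality then reduces to showing $\int_0^\infty e^{-zr}\psi(r)\,dr\ge0$ for all $z\ge0$, where $\psi:=M_\alpha-p$ satisfies $\int_0^\infty\psi=0$ and $\psi(0)=0$.

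The decisive structural fact is that $\psi$ changes sign exactly once, from $+$ to $-$. Granting this, with $r_0$ the crossing point, the variation-diminishing identity
\begin{equation*}
\int_0^\infty e^{-zr}\psi(r)\,dr=\int_0^\infty\bigl(e^{-zr}-e^{-zr_0}\bigr)\psi(r)\,dr\ge0
\end{equation*}
closes the argument, since $e^{-zr_0}\int_0^\infty\psi=0$ and the integrand is nonnegative: both factors are $\ge0$ on $(0,r_0)$ and both $\le0$ on $(r_0,\infty)$. To obtain the single crossing I would study the likelihood ratio $M_\alpha/p$, equivalently $\ell(r):=\ln M_\alpha(r)-\ln p(r)$; since $\ln p$ is affine, $\ell$ is concave precisely when $M_\alpha$ is log-concave, and $\ell(0)=0$ with slope $\ell'(0)=\frac{1}{\Gamma(1-\alpha)}-\frac{\Gamma(1-\alpha)}{\Gamma(1-2\alpha)}$, which is $\ge0$ by log-convexity of $\Gamma$ (whence $\Gamma(1-\alpha)^2\le\Gamma(1)\Gamma(1-2\alpha)$) when $\alpha<\tfrac12$ and $\ge0$ as well when $\alpha\ge\tfrac12$, where $1/\Gamma(1-2\alpha)\le0$. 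A concave $\ell$ with $\ell(0)=0$ and $\ell\to-\infty$ (because $M_\alpha$ has stretched-exponential tails while $p$ decays only exponentially) must cross zero exactly once on $(0,\infty)$; the mass balance $\int\psi=0$ rules out the degenerate case $\ell\le0$, giving precisely the required sign pattern of $\psi$.

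The main obstacle is therefore the log-concavity of the M-Wright density $M_\alpha$ on $(0,\infty)$ (it is transparent for $\alpha=\tfrac12$, where $M_{1/2}(r)=\tfrac{1}{\sqrt\pi}e^{-r^2/4}$); this is the one nontrivial input and the step I expect to require the most care. Should a clean log-concavity argument prove awkward for general $\alpha$, I would fall back on a two-regime estimate that bypasses it: for large $z$ bound the remainder in the asymptotic expansion $E_{\alpha,1}(-z)=\frac{1}{\Gamma(1-\alpha)z}-\frac{1}{\Gamma(1-2\alpha)z^2}+\dots$ explicitly, for moderate $z$ use the alternating power series $E_{\alpha,1}(-z)=\sum_k\frac{(-z)^k}{\Gamma(\alpha k+1)}$ together with monotonicity of its partial sums, and match the two ranges at a crossover radius. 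This is more computational but avoids any appeal to the fine properties of $M_\alpha$.
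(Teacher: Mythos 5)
Your architecture is attractive and much of it checks out: the reduction to the scalar inequality $E_{\alpha,1}(-z)\ge (1+\Gamma(1-\alpha)z)^{-1}$, the Bernstein representation $E_{\alpha,1}(-z)=\int_0^\infty e^{-zr}M_\alpha(r)\,dr$ with $M_\alpha(0)=1/\Gamma(1-\alpha)$, the choice of the exponential comparison density $p$ with matched mass and matched value at the origin, the variation--diminishing identity $\int_0^\infty(e^{-zr}-e^{-zr_0})\psi(r)\,dr\ge0$ once $\psi=M_\alpha-p$ is known to change sign exactly once from $+$ to $-$, and the endpoint computation $\ell'(0)=\tfrac{1}{\Gamma(1-\alpha)}-\tfrac{\Gamma(1-\alpha)}{\Gamma(1-2\alpha)}\ge0$ via log-convexity of $\Gamma$ are all correct. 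But the proof does not close, and you say so yourself: the entire argument funnels through the single-crossing property of $\psi$, which you derive from log-concavity of the M-Wright density $M_\alpha$ on $(0,\infty)$, and that log-concavity is left unproven. It is not a routine or citable fact for general $\alpha\in(0,1)$; it is transparent for $\alpha=\tfrac12$ (Gaussian) and verifiable for $\alpha=\tfrac13$ (Airy), but for arbitrary $\alpha$ it is a genuinely delicate property of a function defined by a non-elementary series/saddle-point representation, and it is precisely the PF$_2$ property equivalent to the sign-change claim you need. As it stands the key lemma of your proof is a conjecture. The fallback you sketch does not repair this: the alternating-series bracketing of $E_{\alpha,1}(-z)$ is only valid where the terms $z^k/\Gamma(\alpha k+1)$ decrease, which fails for moderate $z$ uniformly in $k$, and the ``explicit remainder bound plus crossover matching'' would require uniform-in-$\alpha$ constants in a region where the two sides of the inequality agree to leading order as $z\to\infty$, so there is little slack to absorb crude estimates; none of this is carried out.

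For comparison, the paper does not prove the lemma from scratch at all: its proof is a one-line citation of the lower bound in \cite[Theorem 3.25]{BBB}, i.e.\ the known two-sided bound $(1+\Gamma(1-\alpha)z)^{-1}\le E_{\alpha,1}(-z)\le(1+z/\Gamma(1+\alpha))^{-1}$ (due to T.~Simon). If you want a self-contained argument you should either supply a proof of the log-concavity (or directly of the single-crossing property of $M_\alpha$ against the matched exponential), or reproduce Simon's original probabilistic argument via the representation of $E_{\alpha,1}(-z)$ as the Laplace transform of a power of a positive stable variable; simply asserting the log-concavity leaves the lemma unestablished.
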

\begin{proof}
The bound \eqref{eqn:stability_est} is an immediate consequence of the lower bound in \cite[Theorem 3.25]{BBB}. 
\end{proof}

\begin{lemma}\label{lem:rateE}
For any $\bar{l}>0$,  $\llam_1\geq0$, $\alpha_0\in(0,1)$ and $p\in[1,\frac{1}{1-\alpha_0})$, there exists $\tilde{C}=\tilde{C}(\alpha_0,p,\TT)=\sup_{\alpha'\in[\alpha_0,1)} C(\alpha',p,\TT)>0$ with $C(\alpha',p,\TT)$ as in \eqref{eqn:CalphapTT}, such that for any $\alpha\in[\alpha_0,1)$
and for all $\llam> \llam_1$
\begin{equation}\label{eqn:rateE}
\begin{aligned}
&\left\|d_\alpha\right\|_{L^\infty(0,\TT)}\leq \tilde{C}\llam^{1/p}(1-\alpha),
\hspace*{3cm}\left\| d_\alpha\right\|_{L^p(0,\TT)}\leq \tilde{C}(1-\alpha)\\
&\text{for the function $d_\alpha:[0,\TT]\to\mathbb{R}$ defined by }
d_\alpha(\ttt)=E_{\alpha,1}(-\llam \,\ttt^\alpha)-\exp(-\llam \,\ttt).
\end{aligned}
\end{equation}
\end{lemma}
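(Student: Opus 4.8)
The plan is to exploit that $d_\alpha$ vanishes identically at $\alpha=1$, since $E_{1,1}(-\llam y)=\exp(-\llam y)$, and to separate the two sources of $\alpha$-dependence, namely the \emph{index} of the Mittag-Leffler function and the \emph{exponent} $y^\alpha$ in its argument, which turn out to behave very differently. Accordingly I would write
\[
d_\alpha(y)=\underbrace{\bigl(E_{\alpha,1}(-\llam y^\alpha)-\exp(-\llam y^\alpha)\bigr)}_{=:A_\alpha(y)}+\underbrace{\bigl(\exp(-\llam y^\alpha)-\exp(-\llam y)\bigr)}_{=:B_\alpha(y)},
\]
so that the common argument $\eta:=\llam y^\alpha$ isolates the pure index-comparison in $A_\alpha$ from the exponent-mismatch in $B_\alpha$, and estimate the two pieces in $L^\infty(0,\TT)$ and $L^p(0,\TT)$ separately.

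For $A_\alpha$ both functions carry the same argument, so I would invoke the fundamental theorem of calculus in the index, $A_\alpha(y)=E_{\alpha,1}(-\eta)-E_{1,1}(-\eta)=-\int_\alpha^1\partial_\beta E_{\beta,1}(-\eta)\,d\beta$, whence $|A_\alpha(y)|\le(1-\alpha)\,M$ with $M:=\sup\{|\partial_\beta E_{\beta,1}(-\eta)|:\beta\in[\alpha_0,1],\ \eta\ge0\}$. From the series $\partial_\beta E_{\beta,1}(-\eta)=-\sum_{k\ge1}\frac{(-\eta)^k k\,\psi(\beta k+1)}{\Gamma(\beta k+1)}$ (continuous in $(\beta,\eta)$) together with the large-argument behaviour $E_{\beta,1}(-\eta)\sim\frac{1}{\Gamma(1-\beta)\eta}$, one sees that $\partial_\beta E_{\beta,1}(-\eta)$ decays like $\frac{\psi(1-\beta)}{\Gamma(1-\beta)}\eta^{-1}$ with a prefactor that stays bounded as $\beta\uparrow1$, so $M<\infty$. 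This already gives $\|A_\alpha\|_{L^\infty(0,\TT)}\le M(1-\alpha)$ and $\|A_\alpha\|_{L^p(0,\TT)}\le M\,\TT^{1/p}(1-\alpha)$, uniformly in $\llam$ and for every $p$.

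The term $B_\alpha$ is the main obstacle, and it is exactly where the factor $\llam^{1/p}$ and the hypothesis $p<\frac{1}{1-\alpha_0}$ arise. Here I would use the elementary bound $|B_\alpha(y)|\le\llam\,|y^\alpha-y|\,\exp(-\llam\min\{y^\alpha,y\})$ together with $|y^\alpha-y|=y^\alpha|1-y^{1-\alpha}|\le(1-\alpha)\,y^\alpha|\ln y|$, so that on $(0,1)$ (where $\min\{y^\alpha,y\}=y$) one gets $|B_\alpha(y)|\le(1-\alpha)\,\llam\,y^\alpha|\ln y|\exp(-\llam y)$, the contribution of $y\ge1$ being $O(\llam e^{-\llam})$ and harmless. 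Rescaling by $\tau=\llam y$ turns each norm into a power of $\llam$ times a convergent $\tau$-integral: the $L^\infty$ estimate yields $\sup_y(\cdots)\lesssim(1-\alpha)\llam^{1-\alpha}\ln\llam$, which is bounded by $\tilde C\llam^{1/p}(1-\alpha)$ precisely because $\llam^{1-\alpha}\ln\llam\le C_p\llam^{1/p}$ holds for all $\llam$ iff $1-\alpha<\tfrac1p$; the $L^p$ estimate yields $\|B_\alpha\|_{L^p(0,\TT)}\lesssim(1-\alpha)\llam^{1-\alpha-1/p}\ln\llam$, which stays bounded (indeed $\to0$) under the same strict condition $p<\frac1{1-\alpha}$.

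Taking the supremum over $\alpha\in[\alpha_0,1)$ makes $\alpha=\alpha_0$ the binding case, which reproduces $p<\frac1{1-\alpha_0}$ and a constant $C(\alpha',p,\TT)$ that blows up as $p\uparrow\frac1{1-\alpha'}$, the dependence on $\TT$ entering only through the finite integration interval; note that $C(\alpha',p,\TT)$ remains bounded as $\alpha'\uparrow1$ (there the $B$-contribution vanishes and only the $M$-bound survives), so $\tilde C=\sup_{\alpha'\in[\alpha_0,1)}C(\alpha',p,\TT)<\infty$. Adding the $A_\alpha$ and $B_\alpha$ bounds then gives \eqref{eqn:rateE}. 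The two points I expect to require care are the uniformity of $M$ as $\beta\uparrow1$ in the $A_\alpha$ part, and in the $B_\alpha$ part the precise tracking of the logarithmic factor $\ln\llam$ so that it is absorbed into $\llam^{1/p}$ under the \emph{strict} inequality $p<\frac1{1-\alpha}$.
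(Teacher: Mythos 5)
Your decomposition $d_\alpha=A_\alpha+B_\alpha$ is a genuinely different route from the paper's, and the $B_\alpha$ part is sound: the bound $|\ttt^\alpha-\ttt|\le(1-\alpha)\,\ttt^\alpha|\ln \ttt|$ on $(0,1)$, the rescaling $\tau=\llam \ttt$, and the absorption of $\llam^{1-\alpha}\ln\llam$ into $\llam^{1/p}$ under the strict inequality $\tfrac1p>1-\alpha$ all work and correctly locate where the hypothesis $p<\tfrac{1}{1-\alpha_0}$ enters. The paper never separates the two $\alpha$-dependences. Instead it observes that $v=d_\alpha$ solves $\partial_\ttt v+\llam v=\llam w$ with $w(\ttt)=E_{\alpha,1}(-\llam \ttt^\alpha)-\ttt^{\alpha-1}E_{\alpha,\alpha}(-\llam \ttt^\alpha)$, tests with $|v|^{p-1}\mathrm{sign}(v)$ to obtain $\|v\|_{L^\infty(0,\TT)}\le\llam^{1/p}\|w\|_{L^p(0,\TT)}$ and $\|v\|_{L^p(0,\TT)}\le\|w\|_{L^p(0,\TT)}$, and then proves $\|w\|_{L^p(0,\TT)}\le C(\alpha,p,\TT)(1-\alpha)$ uniformly in $\llam$ (Lemma~\ref{lem:estEalpha}) by writing $w$ as a convolution of a Mittag-Leffler kernel with $\mathfrak{g}_{1-\beta}-\mathfrak{g}_{\alpha-\beta}$ via Laplace transforms and Young's inequality; the index difference then sits only in the elementary power kernels $\mathfrak{g}_\mu(\ttt)=\ttt^{\mu-1}/\Gamma(\mu)$, where the mean value theorem in $\mu$ delivers the factor $(1-\alpha)$ with explicitly controlled constants \eqref{eqn:C0C1}, \eqref{eqn:CalphapTT}.

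The gap in your argument is the $A_\alpha$ step. You reduce it to the claim $M:=\sup\{|\partial_\beta E_{\beta,1}(-\eta)|:\beta\in[\alpha_0,1],\ \eta\ge0\}<\infty$, but this uniform bound over the whole half-line, including the limit $\beta\uparrow1$, is precisely the kind of ``stability estimate for Mittag-Leffler functions with index close to an integer'' that Section~\ref{sec:Cauchy1} flags as unavailable and that the ODE/convolution machinery is built to avoid. The power series controls small $\eta$ and the asymptotic expansion controls $\eta\ge R(\beta)$, but you need $R$ uniform in $\beta$ up to $1$, i.e.\ a remainder bound for the asymptotic expansion that is differentiable in the index and does not degenerate as the leading coefficient $1/\Gamma(1-\beta)$ vanishes; ``one sees that'' does not close this, and without it the claim $M<\infty$ carries essentially the full difficulty of the lemma. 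A second, smaller defect: your $L^\infty$ bound for $A_\alpha$ is $M(1-\alpha)$ with $M$ independent of $\llam$, which does not imply the stated $\tilde C\llam^{1/p}(1-\alpha)$ when $\llam<1$ (the lemma permits $\llam_1=0$); you would need the refinement $|A_\alpha|\lesssim(1-\alpha)\min\{1,\llam\ttt^\alpha\}$ near zero argument, or to restrict to $\llam_1>0$. The paper's energy estimate produces the $\llam^{1/p}$ factor automatically.
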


\begin{proof}
see Appendix~\ref{sec:appendixCauchy1}.
\end{proof}

The estimates from Lemma~\ref{lem:rateE} become more straightforward if the values of $y$ are constrained to a compact interval not containing zero, as relevant for 
Problem~\ref{Cauchy3}. 
This also allows us to derive $L^\infty$ bounds on  $E_{\alpha,1}(-\llam \ttt^\alpha)-\exp(-\llam \ttt)$, which would not be possible without bounding $\ttt$ away from zero, due to the singularities of the Mittag-Leffler functions there.

\begin{lemma}\label{lem:rateEbounds}
For any $0<\ulell<\olell<\infty$, $\llam_1\geq0$, $\alpha_0\in(0,1)$ and $p\in[1,\frac{1}{1-\alpha_0})$, there exist constants $\tilde{C}=\tilde{C}(\alpha_0,p,\olell)>0$ as in Lemma \ref{lem:rateE},  
$\hat{C}=\hat{C}(\alpha_0,\ulell,\olell)>0$, such that for any $\alpha\in[\alpha_0,1)$
and for all $\llam> \llam_1$
\begin{equation}\label{eqn:rateEbounds}
\begin{aligned}
&\left\|d_\alpha\right\|_{L^\infty(\ulell,\olell)}\leq \tilde{C}(1-\alpha),
\hspace*{3cm}\left\|\partial_\ttt d_\alpha\right\|_{L^\infty(\ulell,\olell)}\leq \hat{C}\llam(1-\alpha)\\
&\text{for the function $d_\alpha:[\ulell,\olell]\to\mathbb{R}$ defined by }
d_\alpha(\ttt)=E_{\alpha,1}(-\llam \,\ttt^\alpha)-\exp(-\llam \,\ttt).
\end{aligned}
\end{equation}
\end{lemma}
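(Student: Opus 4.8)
The plan is to make the saving of a factor $(1-\alpha)$ explicit by viewing $d_\alpha$ as the increment, in the order parameter, of a \emph{single} Mittag-Leffler family, and then to reduce both estimates to uniform bounds on the parameter derivative of that family over the compact $\ttt$-interval. Concretely, set $G(\beta,\ttt)=E_{\beta,1}(-\llam\,\ttt^\beta)$, so that $G(1,\ttt)=\exp(-\llam\ttt)$ and $d_\alpha(\ttt)=G(\alpha,\ttt)-G(1,\ttt)$. Since $\beta\mapsto G(\beta,\ttt)$ is $C^1$ on $(0,1)$ and continuous up to $\beta=1$ — here $\ttt\ge\ulell>0$ is essential, as it removes the logarithmic singularity of $\partial_\beta G$ at $\ttt=0$ — the mean value theorem gives $d_\alpha(\ttt)=-(1-\alpha)\,\partial_\beta G(\xi,\ttt)$ for some $\xi=\xi(\ttt)\in(\alpha,1)$. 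Keeping $G$ intact, rather than splitting $d_\alpha$ into its Mittag-Leffler and exponential parts, is what automatically encodes the cancellation between the two terms near the cross-over $\ttt\sim\llam^{-1}$ (this cancellation cannot be recovered from separate bounds, since $\exp(-\llam\ttt)$ carries no $(1-\alpha)$ factor). The first estimate in \eqref{eqn:rateEbounds} then follows once we show
\[
S:=\sup\{|\partial_\beta G(\beta,\ttt)|:\ \beta\in[\alpha_0,1),\ \ttt\in[\ulell,\olell],\ \llam>\llam_1\}<\infty,
\]
which is $\llam$-independent and furnishes the constant $\tilde C$.

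The core of the argument — and its main difficulty — is this uniform-in-$\llam$ bound $S<\infty$, including the regime where $\llam$ is large and $\beta$ is close to $1$. For $\llam$ in a bounded range (equivalently $\llam\,\ttt^\beta\le R$, which on $\ttt\ge\ulell$ forces $\llam\le R\ulell^{-\alpha_0}$) the function $\partial_\beta G$ extends continuously to the compact set $[\alpha_0,1]\times[\ulell,\olell]$ and is therefore bounded; here the series for $E_{\beta,1}$ and its termwise $\beta$-derivative converge locally uniformly. For $\llam\,\ttt^\beta\ge R$ I would instead differentiate the large-argument expansion $E_{\beta,1}(-x)=\frac{x^{-1}}{\Gamma(1-\beta)}+O(x^{-2})$ in $\beta$: the decisive point is that $\tfrac1{\Gamma(1-\beta)}$ and $\tfrac{\psi(1-\beta)}{\Gamma(1-\beta)}$ stay bounded as $\beta\to1^-$ (the former even tending to $0$ like $1-\beta$), while $\ttt^{-\beta}$ and $\ln\ttt$ are bounded because $\ttt\in[\ulell,\olell]$, so $|\partial_\beta G|=O(\llam^{-1})$ there. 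To make this rigorous uniformly up to $\beta=1$ I would use the Hankel-contour (or completely monotone) integral representation of $E_{\beta,1}$ and differentiate under the integral sign; controlling the resulting kernel uniformly across the transition layer $1-\beta\sim\llam\,\ttt^\beta e^{-\llam\,\ttt^\beta}$ is the one genuinely delicate estimate, and it is precisely there that bounding $\ttt$ away from $0$ is indispensable, since that layer sits at $\ttt\sim\llam^{-1}$.

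For the derivative estimate I would reduce to the same kind of statement. Using $\partial_\ttt E_{\beta,1}(-\llam\,\ttt^\beta)=-\llam\,\ttt^{\beta-1}E_{\beta,\beta}(-\llam\,\ttt^\beta)$, one checks $\partial_\ttt d_\alpha(\ttt)=-\llam\,\tilde d_\alpha(\ttt)$ with $\tilde d_\alpha(\ttt)=\ttt^{\alpha-1}E_{\alpha,\alpha}(-\llam\,\ttt^\alpha)-\exp(-\llam\ttt)$, and again $\tilde d_1\equiv0$. Writing $H(\beta,\ttt)=\ttt^{\beta-1}E_{\beta,\beta}(-\llam\,\ttt^\beta)$ and applying the mean value theorem in $\beta$ to $\tilde d_\alpha=H(\alpha,\cdot)-H(1,\cdot)$ yields $\partial_\ttt d_\alpha(\ttt)=\llam\,(1-\alpha)\,\partial_\beta H(\xi',\ttt)$, so that $\|\partial_\ttt d_\alpha\|_{L^\infty(\ulell,\olell)}\le\hat C\,\llam\,(1-\alpha)$ will follow from the analogue of the previous paragraph's bound for $\partial_\beta H$. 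The extra factor $\ttt^{\beta-1}$ is harmless since $\ttt\ge\ulell>0$, and the large-argument behaviour $E_{\beta,\beta}(-x)\sim -x^{-2}/\Gamma(-\beta)$ again gives smallness for large $\llam$; this is the step where the constant acquires its dependence on $\ulell$, producing $\hat C$. The main obstacle is identical to that of the first estimate — the uniform control of the parameter derivatives of $E_{\beta,1}$ and $E_{\beta,\beta}$ of negative argument near $\beta=1$ and for large argument — so once that single lemma is in place, both bounds in \eqref{eqn:rateEbounds} follow.
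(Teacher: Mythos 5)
Your reduction via the mean value theorem in the order parameter is a reasonable way to organise the proof, but as written it has a genuine gap: the entire content of the lemma is concentrated in the claim
\[
S:=\sup\{|\partial_\beta G(\beta,\ttt)|:\ \beta\in[\alpha_0,1),\ \ttt\in[\ulell,\olell],\ \llam>\llam_1\}<\infty ,
\]
and you do not prove it. You correctly identify where it is hard (large $\llam$, $\beta$ close to $1$, the cross-over region $\llam\,\ttt^\beta=O(1)$), but the tools you propose there do not close the argument. Differentiating the large-argument expansion $E_{\beta,1}(-x)=x^{-1}/\Gamma(1-\beta)+O(x^{-2})$ in $\beta$ is not legitimate without uniform-in-$\beta$ control of the remainder, and precisely this uniformity degenerates as $\beta\to1^-$: every algebraic coefficient $1/\Gamma(1-k\beta)$ tends to $0$ while the limit function $e^{-x}$ is transcendentally small, so the error constants in the truncated expansion cannot be taken uniform up to $\beta=1$ without a separate argument. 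Your fallback --- the Hankel-contour representation with differentiation under the integral sign across the transition layer --- is exactly the estimate you would need to supply, and you acknowledge it is ``genuinely delicate'' without carrying it out. Order-derivatives of Mittag-Leffler functions at large negative argument are a nontrivial object, and asserting their uniform boundedness is essentially restating the lemma. (A smaller point: $\partial_\beta G$ also contains the chain-rule term $-\llam\,\ttt^\beta\log\ttt\cdot\tfrac1\beta E_{\beta,\beta}(-\llam\ttt^\beta)$, whose uniform boundedness in $\llam$ again needs a uniform-in-$\beta$ version of $\sup_{x>0}xE_{\beta,\beta}(-x)<\infty$; this is true but also requires proof.)

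The paper avoids the order-derivative of the Mittag-Leffler function altogether, and this is the idea your proposal is missing. Writing $\partial_\ttt d_\alpha=-\llam(e_{\alpha,\alpha}-e_{1,1})$ with $e_{\alpha,\beta}(\ttt)=\ttt^{\beta-1}E_{\alpha,\beta}(-\llam\ttt^\alpha)$, one factorises on the Laplace-transform side,
\[
\mathcal{L}(e_{\alpha,\alpha}-e_{1,1})(\xi)=\frac{\xi^{1-\gamma}}{\llam+\xi}\cdot\frac{\xi^{\alpha-\beta}}{\llam+\xi^\alpha}\cdot\bigl(\xi^{\beta+\gamma-\alpha}-\xi^{\beta+\gamma-1}\bigr),
\]
so that $e_{\alpha,\alpha}-e_{1,1}=e_{1,\gamma}*e_{\alpha,\beta}*(\mathfrak{g}_{\alpha-\beta-\gamma}-\mathfrak{g}_{1-\beta-\gamma})$. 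The difference in the order now sits entirely in the elementary kernels $\mathfrak{g}_\mu(\ttt)=\ttt^{\mu-1}/\Gamma(\mu)$, whose derivative with respect to the order is explicit and trivially controlled by the mean value theorem (producing the factor $1-\alpha$), while the Mittag-Leffler factors only enter through $L^1$ and $L^\infty$ norms that are uniformly bounded in $\llam$. The restriction $\ttt\in[\ulell,\olell]$ is then used only in an elementary splitting of the convolution to handle the endpoint singularities. If you want to salvage your route, you must either prove your uniform bound on $\partial_\beta G$ (and $\partial_\beta H$) honestly, or switch to a factorisation of this kind that moves the $\alpha$-dependence onto kernels you can differentiate in the order explicitly.
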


\begin{proof}
see Appendix~\ref{sec:appendixCauchy1}.
\end{proof}

\medskip

Applying Lemmas~\ref{lem:mlf-stability_est}, \ref{lem:rateE} in \eqref{eqn:totalerror}, \eqref{eqn:approxerr}, \eqref{eqn:propnoise}, we obtain the overall error estimate
\begin{equation}\label{eqn:totalerror1}
\begin{aligned}
\|u-u^{\alpha,\delta}\|_{L^2(0,\olell;H^\sigma(\Omega))} \leq& 
\|u-u^{(\alpha)}\|_{L^2(0,\olell;H^\sigma(\Omega))}\\
&+ \|u_{+0}^\delta-u_{+0}\|_{H^{\sigma}(\Omega)} + \tfrac{\olell^{\alpha+1/2}}{\sqrt{2\alpha+1}}
\Gamma(1-\alpha)\|u_{+0}^\delta-u_{+0}\|_{H^{\sigma+1}(\Omega)},
\end{aligned}
\end{equation}
where we further estimate the approximation error
\[
\begin{aligned}
\|u-u^{(\alpha)}\|_{L^2(0,\olell;H^\sigma(\Omega))}\leq& 
\tilde{C}\bigl((1-\alpha)\| u_+^\dagger\|_{L^2(0,\olell;H^{\sigma+1/p}(\Omega))}\\
&+\olell^\alpha \, (1-\alpha)\Gamma(1-\alpha) \| u_+^\dagger\|_{L^2(0,\olell;H^{\sigma+1+1/p}(\Omega))}\bigr) .
\end{aligned}
\]
Under the assumption $u_+^\dagger\in L^2(0,\olell;H^{\sigma+1+1/p}(\Omega))$, 
from Lebesgue's Dominated Convergence Theorem and uniform boundedness of 
$(1-\alpha)\Gamma(1-\alpha)$  as $\alpha\nearrow1$, as well as convergence to zero of $E_{\alpha,1}(-\llam \,\cdot^\alpha)-\exp(-\llam \,\cdot)$ as $\alpha\nearrow1$, we obtain
$\|u-u^{(\alpha)}\|_{L^2(0,\olell;H^\sigma(\Omega))}\to0$ as $\alpha\nearrow1$.

In view of the fact that the data space in \eqref{eqn:delta} is typically $Y=L^2(\Omega)$, considering the propagated noise term, the $H^\sigma$ and $H^{\sigma+1}$ norms in estimate \eqref{eqn:totalerror1} reveal the fact that even when aiming for the lowest order reconstruction regularity $\sigma=0$, the data needs to be smoothed.

Due to the infinite smoothing property of the forward operator, a method with infinite qualification is required for this purpose. We therefore use Landweber iteration for defining a smoothed version of the data $\uTtd=v^{(i_*)}$ by
\begin{equation}\label{eqn:LW}
v^{(i+1)}=v^{(i)}-A(v^{(i)}-\uTdel)\,, \qquad v^{(0)}=0\,,
\end{equation}
where  
\begin{equation}\label{eqn:Asmoothing}
A=\mu (-\triangle_x)^{-\tilde{\sigma}}
\end{equation}
with $\tilde{\sigma}\in\{\sigma, \sigma+1\}\geq1$ and $\mu>0$ chosen so that $\|A\|_{L^2\to L^2}\leq 1$.

For convergence and convergence rates as the noise level tends to zero, we quote (for the proof, see the appendix of \cite{fracderiv}) a bound in terms of $\|u_+(\cdot,l)\|_{L^2(\Omega)}$ for some fixed $l\in(0,\olell)$, where $u_+$ is the unstable component of the solution according to \eqref{eqn:uplus}.

\begin{lemma}\label{lem:smoothing}
A choice of 
\begin{equation}\label{eqn:istar}
i_*\sim l^{-2} \log\left(\frac{\|u_+(\cdot,l)\|_{L^2(\Omega)}}{\delta}\right)
\end{equation}
yields
\begin{equation}\label{eqn:delta_smoothed}
\begin{aligned}
&\|\uT-\uTtd\|_{L^2(\Omega)}\leq C_1 \delta\,, \\
&\|\uT-\uTtd\|_{H^{\tilde{\sigma}}(\Omega)}\leq C_2 \, l^{-1} \, \delta\, \sqrt{\log\left(\frac{\|u_+(\cdot,l)\|_{L^2(\Omega)}}{\delta}\right)} =:\tilde{\delta}
\end{aligned}
\end{equation}
for some $C_1,C_2>0$ independent of $l$ and $\delta$. 
\end{lemma}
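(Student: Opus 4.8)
The plan is to work entirely in the eigenbasis $(\phi_j)$ of $-\triangle_x$, where $A=\mu(-\triangle_x)^{-\tilde\sigma}$ acts as multiplication by $a_j:=\mu\lambda_j^{-\tilde\sigma}\in(0,1]$ (the bound $a_j\le1$ being exactly $\|A\|_{L^2\to L^2}\le1$). Solving the affine recursion \eqref{eqn:LW}, $v^{(i+1)}=(I-A)v^{(i)}+A\uTdel$ with $v^{(0)}=0$, by a finite geometric sum gives the closed form $\uTtd=v^{(i_*)}=(I-(I-A)^{i_*})\uTdel$, i.e. componentwise $\uTtd_j=(1-(1-a_j)^{i_*})\uTdel_j$. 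I would then use the standard bias/propagated-noise splitting
\[
\uT-\uTtd=\underbrace{(I-A)^{i_*}\uT}_{\text{bias}}+\underbrace{(I-(I-A)^{i_*})(\uT-\uTdel)}_{\text{propagated noise}},
\]
and estimate the two terms separately in $L^2(\Omega)$ and in $H^{\tilde\sigma}(\Omega)$, using $\|w\|_{H^s}^2=\sum_j\lambda_j^s|w_j|^2$.

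For the propagated-noise term only elementary one-variable filter bounds are needed. Since $0\le(1-a_j)^{i_*}\le1$ one has $0\le 1-(1-a_j)^{i_*}\le1$, whence $\|I-(I-A)^{i_*}\|_{L^2\to L^2}\le1$ and, by \eqref{eqn:delta}, the $L^2$ noise contribution is at most $\|\uT-\uTdel\|_{L^2(\Omega)}\le\delta$. For the stronger norm I would prove the smoothing estimate $\lambda_j^{\tilde\sigma}\bigl(1-(1-a_j)^{i_*}\bigr)^2=\mu\,a_j^{-1}\bigl(1-(1-a_j)^{i_*}\bigr)^2\le\mu\,i_*$, which follows from $\sup_{a\in(0,1]}a^{-1}(1-(1-a)^i)^2\le i$ (split at $a=1/i$ and use $1-(1-a)^i\le\min\{1,ia\}$). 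This yields $\|I-(I-A)^{i_*}\|_{L^2\to H^{\tilde\sigma}}\le\sqrt{\mu\,i_*}$, so the $H^{\tilde\sigma}$ noise contribution is $\le\sqrt{\mu\,i_*}\,\delta$; with $\mu$ constant and $i_*$ of the order \eqref{eqn:istar} this is precisely the claimed $\tilde\delta=C_2\,l^{-1}\delta\sqrt{\log(\|u_+(\cdot,l)\|_{L^2(\Omega)}/\delta)}$, which is what pins down the order of $i_*$.

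The bias term is where the fixed height $l$ enters through a source condition: from \eqref{eqn:uplus} the unstable component satisfies $u_+(\cdot,y)=\exp(y\sqrt{-\triangle_x})\,\uT$, so $\uT=\exp(-l\sqrt{-\triangle_x})\,u_+(\cdot,l)$, i.e. $\uT_j=e^{-l\sqrt{\lambda_j}}c_j$ with $\sum_j|c_j|^2=\|u_+(\cdot,l)\|_{L^2(\Omega)}^2$. Writing $s=\sqrt{\lambda_j}$, the bias reduces to the scalar multiplier $(1-\mu s^{-2\tilde\sigma})^{i_*}e^{-ls}$ (with an extra weight $s^{\tilde\sigma}$ in $H^{\tilde\sigma}$), and I would bound it by $\|u_+(\cdot,l)\|_{L^2(\Omega)}\,\sup_{s\ge\sqrt{\lambda_1}}(1-\mu s^{-2\tilde\sigma})^{i_*}e^{-ls}$. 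I would control this supremum by splitting the spectrum at the source cut-off $s_c$ defined by $e^{-ls_c}=\delta/\|u_+(\cdot,l)\|_{L^2(\Omega)}$: above $s_c$ the exponential source decay already supplies the factor $\delta/\|u_+(\cdot,l)\|_{L^2(\Omega)}$ while the filter is $\le1$, and below $s_c$ the filter $(1-\mu s^{-2\tilde\sigma})^{i_*}\le e^{-i_*\mu s^{-2\tilde\sigma}}$ supplies the smallness. The hard part is exactly this balancing: because the forward map is infinitely smoothing, the source condition is of exponential (logarithmic) rather than H\"older type, so the iteration filter must cut off right where the exponential source decay reaches the noise level, and one must track how the constants in the extremal problem $\sup_s(1-\mu s^{-2\tilde\sigma})^{i_*}e^{-ls}$ depend on $l$ so that the single stopping index \eqref{eqn:istar}, already forced by the $H^{\tilde\sigma}$ noise term, simultaneously delivers the $L^2$ bias bound $\le C_1\delta$ and the $H^{\tilde\sigma}$ bias bound $\le\tilde\delta$. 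This is the step for which the infinite qualification of Landweber iteration is essential and where the careful bookkeeping of \cite{fracderiv} is carried out.
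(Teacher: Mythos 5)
First, note that the paper does not actually prove Lemma~\ref{lem:smoothing}: it is quoted, with the proof deferred to the appendix of \cite{fracderiv}, so your reconstruction can only be measured against the statement itself. Your treatment of the iteration and of the propagated noise is correct and complete: the closed form $\uTtd=(I-(I-A)^{i_*})\uTdel$, the splitting into the bias $(I-A)^{i_*}\uT$ plus filtered noise, the bound $\|I-(I-A)^{i_*}\|_{L^2\to L^2}\leq1$, and the filter estimate $\sup_{a\in(0,1]}a^{-1}(1-(1-a)^{i})^2\leq i$ giving $\|I-(I-A)^{i_*}\|_{L^2\to H^{\tilde{\sigma}}}\leq\sqrt{\mu i_*}$ are all valid, and they do show that \eqref{eqn:istar} is essentially the largest $i_*$ compatible with the second line of \eqref{eqn:delta_smoothed}.

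The gap is the bias term, which is the actual content of the lemma and which you only sketch; moreover, the balancing you propose does not close with the stated $i_*$. Writing $s=\sqrt{\lambda_j}$, $a=\mu s^{-2\tilde{\sigma}}$ and $L=\log\bigl(\|u_+(\cdot,l)\|_{L^2(\Omega)}/\delta\bigr)$, your plan requires the filter $(1-\mu s^{-2\tilde{\sigma}})^{i_*}$ to be $\leq e^{-L}$ for all $s$ up to the cut-off $s_c=l^{-1}L$; since $(1-a)^{i_*}$ is comparable to $e^{-i_*a}$ there and $a$ is smallest at $s=s_c$, this forces $i_*\gtrsim\mu^{-1}L\,s_c^{2\tilde{\sigma}}=\mu^{-1}l^{-2\tilde{\sigma}}L^{2\tilde{\sigma}+1}$, i.e.\ already for $\tilde{\sigma}=1$ an $i_*$ of order $l^{-2}L^{3}$, larger by a factor $L^{2}$ than \eqref{eqn:istar} and incompatible with the noise bound you derived. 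Optimising $\sup_{s}(1-\mu s^{-2\tilde{\sigma}})^{i_*}e^{-ls}$ exactly confirms this: with $i_*=l^{-2}L$ the supremum is only of order $\exp\bigl(-cL^{1/(2\tilde{\sigma}+1)}\bigr)$, far larger than $e^{-L}=\delta/\|u_+(\cdot,l)\|_{L^2(\Omega)}$, so the route ``supremum of the scalar multiplier times $\|u_+(\cdot,l)\|_{L^2(\Omega)}$'' cannot yield $\|(I-A)^{i_*}\uT\|_{L^2(\Omega)}\leq C_1\delta$. You therefore need either a genuinely different argument for the bias (one that does not reduce to this scalar extremal problem), or the detailed bookkeeping of \cite{fracderiv} to see how the exponents in \eqref{eqn:istar} arise; as written, the ``hard part'' you defer is not a technicality but precisely the point at which your proposed proof fails.
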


Thus, using $\uTtd$ in place of $\uT$ in the reconstruction, we obtain the following convergence result.

\begin{theorem}
Let the exact solution $u^\dagger$ of Problem~\ref{Cauchy1} satisfy $u_+^\dagger\in L^2(0,\olell;H^{\sigma+1+1/p}(\Omega))$ for some $\sigma\geq0$, $p>1$ and let the noisy data satisfy \eqref{eqn:delta} with smoothed data constructed as in Lemma~\ref{lem:smoothing}. Further, assume that $\alpha=\alpha(\tilde{\delta})$ is chosen such that $\alpha(\tilde{\delta})\to1$ and 
$\Gamma(1-\alpha(\tilde{\delta}))\tilde{\delta}
\to0$ as $\delta\to0$. Then $\|u-u^{\alpha(\tilde{\delta}),\delta}\|_{L^2(0,\olell;H^\sigma(\Omega))}\to0$ as $\delta\to0$.
\end{theorem}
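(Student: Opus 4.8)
The plan is to start from the error decomposition \eqref{eqn:totalerror} together with the consolidated estimate \eqref{eqn:totalerror1}, but with the raw noisy datum $\uTdel$ replaced throughout by its Landweber-smoothed version $\uTtd$ from Lemma~\ref{lem:smoothing}. This separates the total reconstruction error into an approximation part $\|u-u^{(\alpha)}\|_{L^2(0,\olell;H^\sigma(\Omega))}$, which depends only on $\alpha$, and a propagated-data part made up of the two remaining terms of \eqref{eqn:totalerror1}. I would then show that each part tends to zero as $\delta\to0$ under the stated parameter rule, and conclude by the triangle inequality.

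For the approximation part, I would invoke the convergence $\|u-u^{(\alpha)}\|_{L^2(0,\olell;H^\sigma(\Omega))}\to0$ as $\alpha\nearrow1$ already established above by applying Lebesgue's Dominated Convergence Theorem to the integral representation \eqref{eqn:approxerr}. The point to stress is that the integrand tends to zero pointwise: since $E_{\alpha,1}(-\llam\,\ttt^\alpha)\to\exp(-\llam\,\ttt)>0$ for $\ttt>0$, both $d_\alpha$ and $d_\alpha/E_{\alpha,1}$ vanish in the limit, while Lemmas~\ref{lem:mlf-stability_est} and~\ref{lem:rateE} furnish a majorant that is integrable precisely under the hypothesis $u_+^\dagger\in L^2(0,\olell;H^{\sigma+1+1/p}(\Omega))$. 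Here the \emph{bounded} factor $(1-\alpha)\Gamma(1-\alpha)=\Gamma(2-\alpha)$ is used only to dominate, not to force decay. Because the parameter rule guarantees $\alpha(\tilde{\delta})\to1$ as $\delta\to0$, this part vanishes along the relevant sequence.

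For the propagated-data part, I would run the smoothing \eqref{eqn:LW}, \eqref{eqn:Asmoothing} at the single level $\tilde{\sigma}=\sigma+1\geq1$, so that Lemma~\ref{lem:smoothing} gives $\|\uT-\uTtd\|_{H^{\sigma+1}(\Omega)}\leq\tilde{\delta}$ and, through the continuous embedding $H^{\sigma+1}(\Omega)\hookrightarrow H^\sigma(\Omega)$, also $\|\uT-\uTtd\|_{H^\sigma(\Omega)}\leq\tilde{\delta}$. Substituting into \eqref{eqn:totalerror1} bounds the propagated noise by $\tilde{\delta}+\frac{\olell^{\alpha+1/2}}{\sqrt{2\alpha+1}}\Gamma(1-\alpha)\tilde{\delta}$, where the prefactor $\olell^{\alpha+1/2}/\sqrt{2\alpha+1}$ stays bounded for $\alpha\in[\alpha_0,1)$. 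Since $\Gamma(1-\alpha)\geq\Gamma(1)=1$ for $\alpha\in[0,1)$, one has $\tilde{\delta}\leq\Gamma(1-\alpha)\tilde{\delta}$, so both summands are dominated by a constant multiple of $\Gamma(1-\alpha(\tilde{\delta}))\tilde{\delta}$, which tends to zero by the standing assumption. Collecting the two parts then yields $\|u-u^{\alpha(\tilde{\delta}),\delta}\|_{L^2(0,\olell;H^\sigma(\Omega))}\to0$.

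The hard part is the tension built into the parameter choice rather than any single estimate: driving the approximation error down forces $\alpha\nearrow1$, but then $\Gamma(1-\alpha)\to+\infty$ amplifies the smoothed noise, so $\alpha$ may approach $1$ only as slowly as the condition $\Gamma(1-\alpha(\tilde{\delta}))\tilde{\delta}\to0$ permits. The theorem posits a rule realising this balance, and the substantive check is that such a rule is compatible with $\alpha(\tilde{\delta})\to1$. A secondary subtlety is that the approximation-error convergence cannot be read off the final upper bound in \eqref{eqn:totalerror1} term by term, because its $\Gamma(2-\alpha)$-factor does not decay; the Dominated Convergence argument is therefore indispensable, and one must ensure its majorant is uniform over $\alpha\in[\alpha_0,1)$ so that the limit may legitimately be taken along the $\delta$-dependent sequence $\alpha(\tilde{\delta})$.
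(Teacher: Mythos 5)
Your proposal is correct and follows essentially the same route as the paper: the theorem is obtained exactly by combining the decomposition \eqref{eqn:totalerror} with the estimate \eqref{eqn:totalerror1}, the dominated-convergence argument for the approximation error (where $(1-\alpha)\Gamma(1-\alpha)$ serves only as a bounded majorant), and Lemma~\ref{lem:smoothing} together with the condition $\Gamma(1-\alpha(\tilde{\delta}))\tilde{\delta}\to0$ for the propagated noise. Your remarks on the necessity of the DCT step and on taking the limit along the $\delta$-dependent sequence $\alpha(\tilde{\delta})$ match the paper's reasoning.
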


Since $\Gamma(1-\alpha)\sim(1-\alpha)^{-1}$ as $\alpha\nearrow1$, the condition $\Gamma(1-\alpha(\tilde{\delta}))\tilde{\delta}\to0$ means that $\alpha(\tilde{\delta})$ must not converge to unity too fast as the noise level vanishes -- a well known condition in the context of regularisation of ill-posed problems.

\section{Problem 2}\label{sec:Cauchy2}
\begin{problem}\label{Cauchy2}
Given $f$, $g$ in
\[
\begin{aligned}
&-\triangle u =0  \mbox{ in }x\in(0,L)\,, \ y\in(0,\ell(x))\\
&B_0u(0,y)=0\,, \ x\in(0,\ell(0))\quad B_Lu(L,y)=0\,, \ x\in(0,\ell(L))\\
&u(x,0)=f(x)\,, u_y(x,0)=g(x)\, \quad x\in\Omega
\end{aligned}
\]
find $\ell:(0,w)\to(0,\bar{l})$ such that one of the following three conditions holds on the interface defined by $\ell$.
\begin{equation}\label{BCell}
\begin{aligned}
&(N)\quad \neumann{\partial_\nu u=0 \ : \quad  0=\partial_\nnu u(x,\ell(x))=u_y(x,\ell(x))-\ell'(x) u_x(x,\ell(x)) 
\quad x\in(0,L)}\text{ or }\\
&(D)\quad \dirichlet{u=0 \ : \quad u(x,\ell(x))\quad x\in(0,L)} \text{ or }\\
&(I)\quad \impedance{\partial_\nu u+\gamma u = 0 \ : \quad  0=\partial_\nnu u(x,\ell(x)) + \sqrt{1+\ell'(x)^2}\gamma(x)u(x,\ell(x))
\quad x\in(0,L)}.
\end{aligned}
\end{equation}
\end{problem}
(Note that in \eqref{BCell} $\nnu$ is the non-normalised outer normal direction, but for the zero level set this does not matter.)

A by now classical reference for this problem is \cite{Alessandrini:1997}.

Here we make the a priori assumption that the searched for domain is contained in the rectangular box $(0,L)\times(0,\olell)$.\hfill\break
The operators $B_0$ and $B_L$ determine the boundary conditions on the lateral boundary parts, which may also be of Dirichlet, Neumann, or impedance type.

To emphasise dependence on the parametrisation $\ell$, we denote the domain as well as the fixed and variable parts of its boundary as follows
\begin{equation}\label{Dofell}
\begin{aligned}
&D(\ell)=\{(x,y)\in(0,L)\times(0,\olell)\, : \,  y\in(0,\ell(x))\}\,, \\
&\Gamma_0(\ell)=\{(x,\ell(x))\, : \, x\in(0,L)\}\\
&\Gamma_1=(0,L)\times\{0\}\,,  \\ 
&\Gamma_2(\ell)=\{0\}\times(0,\ell(0))\cup \{L\}\times(0,\ell(L))\\
\end{aligned}
\end{equation}
(note that $\Gamma_2(\ell)$ depends on $\ell$ only weakly via its endpoints $\ell(0)$ and $\ell(L)$).
With this we can write the forward problem as
\begin{equation}\label{forward}
\begin{aligned}
&\hspace*{0.4cm}-\triangle u =0  \mbox{ in }D(\ell)\\
&\hspace*{1.3cm}u=f\mbox{ on }\Gamma_1\\
&\hspace*{1cm}Bu=0\mbox{ on }\Gamma_2(\ell)\\
&\left.\begin{array}{lll}
(N)& \neumann{\partial_\nnu u=0 \ } &\ \text{or}\\
(D)& \dirichlet{u=0 } &\ \text{or}\\
(I)& \impedance{\partial_\nu u+\gamma u =0 \ }.
\end{array}\right\}
\mbox{ on }\Gamma_0(\ell).
\end{aligned}
\end{equation}

\medskip

We will split the full inverse problem into two subproblems: The linear severely ill-posed Cauchy problem  on $D(\olell)$ (which is our Problem~\ref{Cauchy1}) and a well posed (or maybe mildly ill-posed) nonlinear problem of reconstructing the curve $\ell$.
Thus a straightforward approach for solving Problem~\ref{Cauchy2} would be to first solve a regularised version of Problem~\ref{Cauchy1} (e.g., in the way devised in Section ~\ref{sec:Cauchy1}) and then applying Newton's method to recover $\ell$ from \eqref{BCell}.

\medskip

However, we follow a combined approach, writing Problem~\ref{Cauchy2} as an operator equation with the total forward operator $F$, and applying a Newton scheme, in which we make use of a regularised solution of Problem~\ref{Cauchy1}.

The forward operator is defined by
\[
F:\ell\mapsto u_y(x,0)-g(x)\mbox{ where $u$ solves \eqref{forward}} .
\]
Its linearisation in the direction $\dl:(0,w)\to\mathbb{R}$ of the parametrisation $\ell$ is 
is given by $F'(\ell)\dl =\partial_y v\vert_{\Gamma_1}$, where $v$ solves
\begin{equation}\label{FprimeCauchy2}
\begin{aligned}
&\hspace*{0.4cm}-\triangle v =0  \mbox{ in }D(\ell)\\
&\hspace*{1.3cm}v=0\mbox{ on }\Gamma_1\\
&\hspace*{1cm}Bv=0\mbox{ on }\Gamma_2(\ell)\\
&\left.\begin{array}{lll}
(N)&\ \neumann{\partial_\nnu v(x,\ell(x))= \tfrac{d}{dx} \left(\dl (x)u_x(x,\ell(x))\right)
}
&\text{ or}\\
(D)&\ \dirichlet{v(x,\ell(x))=-u_y(x,\ell(x)) \dl (x)} 
&\text{ or}\\
(I)&\ \impedance{\partial_\nnu v(x,\ell(x))+\sqrt{1+\ell'(x)^2}\gamma(x)v(x,\ell(x))=
G(u,\ell)\dl }
\end{array}\right\}
\mbox{ on }\Gamma_0(\ell).
\end{aligned}
\end{equation}
where 
in the Neumann case (N) we can also use the alternative formulation
$\partial_\nnu v(x,\ell(x))= \tfrac{d}{dx} \left(\dl (x)u_x(x,\ell(x))\right)
= \tfrac{d}{dx} \left(\dl (x)\tfrac{1}{1+\ell'(x)^2} \partial_{\ttau}u(x,\ell(x))\right)$ and in the impedance case the boundary value function $G$ is defined by
\impedance{
\[
\begin{aligned}
G(u,\ell)\dl :=&
\dl'(x)\Bigl(u_x(x,\ell(x))-\tfrac{\ell'(x)}{\sqrt{1+\ell'(x)^2}}\gamma(x)u(x,\ell(x))\Bigr)\\
&\quad-\dl (x)\Bigl(u_{yy}(x,\ell(x))-\ell'(x)u_{xy}(x,\ell(x))
+\sqrt{1+\ell'(x)^2}\gamma(x)u_y(x,\ell(x))\Bigr)\\
=:&\tfrac{d}{dx}[\coeffalpha[\ell,u] \dl]-\coeffbeta[\ell,u];
\end{aligned}
\]
see see Appendix~\ref{sec:appendixCauchy2} for more explicit formulas for $\coeffalpha[\ell,u]$, $\coeffbeta[\ell,u]$.}
Note that this is obtained in a similar manner to the formula for the shape derivative \cite[equation (3.1)]{Rundell2008a}, see also \cite{Hettlich1995}, and the identity $ds=\sqrt{1+\ell'(x)^2}\, dx$ for the arclength parametrisation $s$, but using $\nnu$ in place of $\nu$, as well as 
\[
\begin{aligned}
\vec{x}_h(x)=\left(\begin{array}{c}0\\ \dl(x)\end{array}\right)\,,\quad 
&\vec{\nnu}(x)= \left(\begin{array}{c}-\ell'(x)\\1\end{array}\right)\,,\quad 
\vec{\nu}(x)=\frac{1}{\sqrt{1+\ell'(x)^2}} \vec{\nnu}(x)\\
&\vec{\ttau}(x)= \left(\begin{array}{c}1\\ \ell'(x)\end{array}\right)\,,\quad 
\vec{\tau}(x)=\frac{1}{\sqrt{1+\ell'(x)^2}} \vec{\ttau}(x).
\end{aligned}
\]

Thus, computation of a Newton step $\dl =\dl^{(k)}$ starting from some iterate $\ell^{(k)}=\ell$ amounts to solving the system
\begin{equation}\label{eqn:Newtonstep}
\begin{aligned}
&\hspace*{0.4cm}-\triangle z =0  \mbox{ in }D(\ell)\\
&\hspace*{1.3cm}z=f\mbox{ on }\Gamma_1\\
&\hspace*{1cm}Bz=0\mbox{ on }\Gamma_2(\ell)\\
&\hspace*{1.2cm}z_y=g\mbox{ on }\Gamma_1\\
&\left.\begin{array}{lll}
(N)&\ \neumann{\partial_\nnu z(x,\ell(x))= \tfrac{d}{dx} \left(\dl (x)u_x(x,\ell(x))\right)}
&\text{ or}\\
(D)&\ \dirichlet{z(x,\ell(x))=-u_y(x,\ell(x)) \dl(x)} 
&\text{ or}\\
(I)&\ \impedance{\partial_\nnu z(x,\ell(x))+\sqrt{1+\ell'(x)^2}\gamma(x)z(x,\ell(x))=
\frac{d}{dx}[\coeffalpha[\ell,u] \dl]-\coeffbeta[\ell,u]}
\end{array}\right\}
\mbox{ on }\Gamma_0(\ell).
\end{aligned}
\end{equation}
(note that $z_y\vert_{\Gamma_1}-g=F(\ell)+F'(\ell)\dl\;$). \hfill\break

If we solve (a regularised version of) the Cauchy problem on the rectangular hold-all domain
\begin{equation}\label{eqn:zbar}
\begin{aligned}
&-\triangle \bar{z} =0  \mbox{ in }D(\olell)=(0,L)\times(0,\olell)\\
&\bar{z}=f\mbox{ on }\Gamma_1\\
&B\bar{z}=0\mbox{ on }\Gamma_2(\olell)=\{0,L\}\times(0,\olell)\\
&\partial_y \bar{z}-g=0\mbox{ on }\Gamma_1\\
\end{aligned}
\end{equation}
in advance, then by uniqueness of solutions to the Cauchy problem, $z$ coincides with $\bar{z}$ on $D(\ell)$.

Therefore, in each Newton step it only remains to compute $u^{(k)}=u$ from the well-posed mixed elliptic boundary value problem \eqref{forward} with $\ell=\ell^{(k)}$ and update as follows.

\neumann{
\underline{In the Neumann case},
\begin{equation}\label{eqn:NewtonstepNeumann}
\begin{aligned}
(N)\ \ \ell^{(k+1)}(x)&=\ell^{(k)}(x)+
\frac{1}{u_x^{(k)}(x,\ell^{(k)}(x))}\int_0^x\partial_\nnu \bar{z}(\xi,\ell^{(k)}(\xi))\, d\xi \\
&=\ell^{(k)}(x)+
\frac{1}{u_x^{(k)}(x,\ell^{(k)}(x))}\int_0^x \Bigl(\partial_y \bar{z}(\xi,\ell^{(k)}(\xi))-{\ell^{(k)}}'(\xi)\partial_x \bar{z}(\xi,\ell^{(k)}(\xi))\Bigr)\, d\xi.
\end{aligned}
\end{equation}
If $B$ denotes the lateral Dirichlet trace, then this also needs to be regularised, since 
due to the identity $u(0,y)=0=u(L,y)$, the partial derivative  $u_x(\cdot,y)$ has to vanish at least at one interior point $x$ for each $y$.
To avoid problems arising from division by zero, 
we thus solve a regularised version
\[
\begin{aligned}
(\ell^{(k+1)}-\ell^{(k)})
&=\textup{argmin}_{\dl} \int_0^L
\Bigl(\partial_\nnu z(x,\ell(x)) - \tfrac{d}{dx} [\dl(x)u_x(x,\ell(x))]\Bigr)^2\, dx\\
&\hspace*{5cm}+\frac{1}{\rho_1}\int_0^L \dl'(x)^2\, dx +\rho_2 (\dl(0)^2+\dl(L)^2)
\end{aligned}
\]
with a regularisation parameter $\frac{1}{\rho_1}$ and a penalisation parameter $\rho_2$ enforcing our assumption of $\dl$ to be known at the boundary points.
}

\medskip

\dirichlet{
\underline{In the Dirichlet case}, the Newton step computes as 
\begin{equation}\label{eqn:NewtonstepDirichlet}
\hspace*{-1cm}
(D)\ \ \ell^{(k+1)}(x)=\ell^{(k)}(x)-\frac{\bar{z}(x,\ell^{(k)}(x))}{u_y^{(k)}(x,\ell^{(k)}(x))}.
\hspace*{7cm}
\end{equation}
With lateral Dirichlet conditions $Bu=u=0$ we have $u_y^{(k)}(0,\ell(0))=u_y^{(k)}(L,\ell(L))=0$ and so would have to divide by numbers close to zero near the endpoints. This can be avoided by imposing Neumann conditions $Bu=\partial_\nu u=0$ on the lateral boundary.
Still, the problem is mildly ill-posed and thus needs to be regularised for the following reason.
In view of the Implicit Function Theorem, the function $\ell$, being implicitly defined by $u(x,\ell(x))=0$, has the same order of differentiability as $u$.
However, \eqref{eqn:NewtonstepDirichlet} contains an additional derivative of $u$ as compared to $\ell$. Obtaining a bound on $u_y^{(k)}$ in terms of $\ell^{(k)}$ from elliptic regularity, (cf., e.g., \cite{grisvard_2011}) cannot be expected to be possible with the same level of differentiability.
}

\medskip

\impedance{
\underline{In the impedance case}, with 
\begin{equation*}
\begin{aligned}
\coeffalpha[\ell,u](x)&=
\begin{cases}
\tfrac{d}{dx} u(x,\ell(x))=u_x(x,\ell(x)) &\text{ if } \ell'(x)=0\\
\tfrac{1}{\ell'(x)}\Bigl(\tfrac{\gamma(x)}{\sqrt{1+\ell'(x)^2}}u(x,\ell(x))+u_y(x,\ell(x)))\Bigr)
&\text{ otherwise}\end{cases}\\[1ex]
\coeffbeta[\ell,u](x)&=\Bigl(-\sqrt{1+\ell'(x)^2}\gamma(x)u_y(x,\ell(x))+\tfrac{d}{dx}\Bigl[\tfrac{\ell'(x)}{\sqrt{1+\ell'(x)^2}}\gamma(x)u(x,\ell(x))\Bigr]\Bigr)\\
&=\Bigl(\tfrac{\ell''(x)}{\sqrt{1+\ell'(x)^2}^3}\gamma(x)
+\tfrac{\ell'(x)}{\sqrt{1+\ell'(x)^2}} \gamma'(x) +\gamma(x)^2\Bigr) u(x,\ell(x))
\end{aligned}
\end{equation*}
and 
\begin{equation*}
\begin{aligned}
\phi(x)&= \dl(x)\, \coeffalpha[\ell,u](x)
\qquad
a(x)=
\tfrac{\coeffbeta[\ell,u]}{\coeffalpha[\ell,u]}(x),
\\
b(x)&=\partial_\nnu \bar{z}(x,\ell^{(k)}(x)) + \sqrt{1+\ell'(x)^2}\gamma(x)\bar{z}(x,\ell^{(k)}(x))
\end{aligned}
\end{equation*}
the Newton step amounts to solving $\tfrac{d}{dx} \phi(x) -a(x)\phi(x)=b(x)$, which yields
\begin{equation}\label{eqn:NewtonstepImpedance}
\begin{aligned}
(I)\ \ \ell^{(k+1)}(x)&=\ell^{(k)}(x)-\tfrac{1}{\coeffalpha[\ell^{(k)},u^{(k)}](x)}
\Bigl\{
\exp\Bigl(-\int_0^x\tfrac{\coeffbeta[\ell^{(k)},u^{(k)}]}{\coeffalpha[\ell^{(k)},u^{(k)}]}(s)\, ds\Bigr) \dl(0)\coeffalpha[\ell^{(k)},u^{(k)}](0)\\
&\hspace*{5cm}+\int_0^x b(s) \exp\Bigl(-\int_s^x\tfrac{\coeffbeta[\ell^{(k)},u^{(k)}]}{\coeffalpha[\ell^{(k)},u^{(k)}]}(t)\, dt\Bigr)\, ds
\Bigr\}.
\end{aligned}
\end{equation}
See Appendix~\ref{sec:appendixCauchy2} for details on the derivation of this formula.
Also here, due to the appearance of derivatives of $u$ and $\ell$, regularisation is needed.
}

\hfill\break
In Section~\ref{sec:Cauchy3}, we will prove convergence of a regularised frozen Newton method for simultaneously recovering $\ell$ and $\gamma$.

\begin{remark}(Uniqueness)
In the Neumann case $\partial_\nnu u=0$ on $\Gamma_0(\ell)$, the linearisation $F'(\ell)$ is not injective since $F'(\ell)\dl=0$ only implies that $\dl(x)u_x(x,\ell(x))$ is constant.\hfill\break
There is nonuniqueness in the nonlinear inverse problem $F(\ell)=0$ as well, as the counterexample $f(x)=\sin(\pi x/L)$, $g(x)=0$, $u(x,y)=f(x)$ shows;
all horizontal lines $\ell(x)\equiv c$ for $c\in\mathbb{R}^+$ solve the inverse problem.

\medskip

\dirichlet{
In the Dirichlet case $u=0$ on $\Gamma_0(\ell)$, linearised uniqueness follows from the formula $z(x,\ell(x))=-u_y(x,\ell(x)) \dl(x)$ provided $u_y$ does not vanish on an open subset $\Gamma$ of $\Gamma_0(\ell)$. The latter can be excluded by Holmgren's theorem, since $-\triangle u=0$, together with the conditions $u=0$, $u_y=0$ on $\Gamma_0(\ell)$ defines a noncharacteristic Cauchy problem and therefore would imply $u\equiv0$ on $D(\ell)$, a contradiction to $f\not=0$.\hfill\break
Full uniqueness can be seen from the fact that if $\ell$ and $\tilde{\ell}$ solve the inverse problem, then on the domain enclosed by these two curves (plus possibly some $\Gamma_2$ boundary part), $u$ satisfies a homogeneous Dirichlet Laplace problem and therefore has to vanish identically. This, on the other hand would yield a homogeneous Cauchy problem for $u$ on the part $D(\min\{\ell,\tilde{\ell}\})$ that lies below both curves and thus imply that $u$ vanishes identically there. Again we would then have a contradiction to $f\not=0$. \\
This uniqueness proof would also work with Neumann or impedance instead of Dirichlet conditions on the lateral boundary $\Gamma_2$.
}

\medskip

\impedance{
For uniqueness in the impedance case, see also \cite[Theorem 2.2]{KressRundell:2001}.
}
\end{remark}

\subsection{Reconstructions}\label{sec:reconCauchy2}
\input colordvi

\input pictex
\font\smallsymbol = cmmi8
\newdimen\xfiglen \newdimen\yfiglen
\xfiglen=4 true in
\yfiglen=2 true in
\newbox\figurelegendone
\newbox\figurelegendtwo
\newbox\figurelegendthree
\newbox\figureone
\newbox\figuretwo
\newbox\figurethree
\newbox\figurefour
\newbox\figurefive
\newbox\figuresix

%
\setbox\figurelegendtwo=\hbox{
\beginpicture
  \setcoordinatesystem units <0.06\xfiglen,0.05\yfiglen> 
  \setplotarea x from 0 to 2, y from 0 to 6
\footnotesize
\linethickness=0.5pt
\scriptsize
   \setdashes <3pt>  \putrule from 0 3 to 1.25 3
   \setsolid
   \Black{\relax \putrule from 0 6 to 1 6 \relax}\relax
   \put {$\ell(x)$ actual}  [l] at 1.2 6
   \Red{\relax \putrule from 0 5 to 1 5 \relax}\relax
   \put {iter 5 } [l] at 1.2 5
   \Purple{\relax \putrule from 0 4 to 1 4 \relax}\relax
   \put {iter 4 } [l] at 1.2 4
   \Blue{\relax \putrule from 0 3 to 1 3 \relax}\relax
   \put {iter 3 } [l] at 1.2 3
   \Cyan{\relax \putrule from 0 2 to 1 2 \relax}\relax
   \put {iter 2 } [l] at 1.2 2
   \Green{\relax \putrule from 0 1 to 1 1 \relax}\relax
   \put {iter 1 } [l] at 1.2 1
\setdots <3pt>
   \Black{\relax \putrule from 0 0 to 1 0 \relax}\relax
   \put {$\ell(x)$ initial } [l] at 1.2 -0.1
\endpicture
}
\setbox\figuretwo=\vbox{\hsize=\xfiglen
\beginpicture
  \setcoordinatesystem units <0.4\xfiglen,10\yfiglen>  point at 0.0 0.0
  \setplotarea x from 0 to 1, y from 0.0 to 0.1
\relax
\scriptsize
  \axis bottom ticks short numbered from 0 to 1 by 0.2 /
  \axis left ticks short numbered from 0 to 0.09 by 0.01 /
\footnotesize
\put {$\ell(x)$} [l] at 0.02 0.1
\linethickness=0.6pt
\putrule from 1 0.0 to 1 0.1
\setdashes <3pt>
\putrule from 0 0.02 to 0.7 0.02
\put {$\ell_{init}$} at 0.75 0.02
\putrule from 0.8 0.02 to 1 0.02
\setquadratic
\setsolid
\Black{\relax
\plot    
  0.0000   0.0900
  0.0200   0.0899
  0.0400   0.0897
  0.0600   0.0893
  0.0800   0.0888
  0.1000   0.0881
  0.1200   0.0873
  0.1400   0.0864
  0.1600   0.0854
  0.1800   0.0843
  0.2000   0.0831
  0.2200   0.0819
  0.2400   0.0806
  0.2600   0.0794
  0.2800   0.0781
  0.3000   0.0769
  0.3200   0.0757
  0.3400   0.0746
  0.3600   0.0736
  0.3800   0.0727
  0.4000   0.0719
  0.4200   0.0712
  0.4400   0.0707
  0.4600   0.0703
  0.4800   0.0701
  0.5000   0.0700
  0.5200   0.0701
  0.5400   0.0703
  0.5600   0.0707
  0.5800   0.0712
  0.6000   0.0719
  0.6200   0.0727
  0.6400   0.0736
  0.6600   0.0746
  0.6800   0.0757
  0.7000   0.0769
  0.7200   0.0781
  0.7400   0.0794
  0.7600   0.0806
  0.7800   0.0819
  0.8000   0.0831
  0.8200   0.0843
  0.8400   0.0854
  0.8600   0.0864
  0.8800   0.0873
  0.9000   0.0881
  0.9200   0.0888
  0.9400   0.0893
  0.9600   0.0897
  0.9800   0.0899
  1.0000   0.0900
 /\relax}\relax
\linethickness=0.4pt
\setdashes <3pt>
\setsolid
\Green{\relax   
\plot
  0.0000   0.0355
  0.0200   0.0355
  0.0400   0.0355
  0.0600   0.0355
  0.0800   0.0355
  0.1000   0.0354
  0.1200   0.0354
  0.1400   0.0353
  0.1600   0.0353
  0.1800   0.0352
  0.2000   0.0352
  0.2200   0.0351
  0.2400   0.0350
  0.2600   0.0350
  0.2800   0.0349
  0.3000   0.0348
  0.3200   0.0347
  0.3400   0.0346
  0.3600   0.0346
  0.3800   0.0345
  0.4000   0.0344
  0.4200   0.0344
  0.4400   0.0343
  0.4600   0.0343
  0.4800   0.0343
  0.5000   0.0343
  0.5200   0.0343
  0.5400   0.0343
  0.5600   0.0344
  0.5800   0.0344
  0.6000   0.0345
  0.6200   0.0345
  0.6400   0.0346
  0.6600   0.0347
  0.6800   0.0348
  0.7000   0.0348
  0.7200   0.0349
  0.7400   0.0350
  0.7600   0.0350
  0.7800   0.0351
  0.8000   0.0352
  0.8200   0.0352
  0.8400   0.0353
  0.8600   0.0353
  0.8800   0.0354
  0.9000   0.0354
  0.9200   0.0355
  0.9400   0.0355
  0.9600   0.0355
  0.9800   0.0355
  1.0000   0.0355
 /\relax}\relax
\setsolid
\Cyan{\relax   
\plot
  0.0000   0.0569
  0.0200   0.0569
  0.0400   0.0568
  0.0600   0.0568
  0.0800   0.0567
  0.1000   0.0565
  0.1200   0.0564
  0.1400   0.0562
  0.1600   0.0560
  0.1800   0.0557
  0.2000   0.0555
  0.2200   0.0552
  0.2400   0.0549
  0.2600   0.0545
  0.2800   0.0542
  0.3000   0.0538
  0.3200   0.0535
  0.3400   0.0532
  0.3600   0.0529
  0.3800   0.0526
  0.4000   0.0523
  0.4200   0.0521
  0.4400   0.0520
  0.4600   0.0519
  0.4800   0.0518
  0.5000   0.0518
  0.5200   0.0519
  0.5400   0.0520
  0.5600   0.0522
  0.5800   0.0524
  0.6000   0.0526
  0.6200   0.0528
  0.6400   0.0531
  0.6600   0.0534
  0.6800   0.0537
  0.7000   0.0540
  0.7200   0.0543
  0.7400   0.0546
  0.7600   0.0548
  0.7800   0.0551
  0.8000   0.0554
  0.8200   0.0556
  0.8400   0.0559
  0.8600   0.0561
  0.8800   0.0563
  0.9000   0.0565
  0.9200   0.0567
  0.9400   0.0568
  0.9600   0.0569
  0.9800   0.0570
  1.0000   0.0570
/\relax}\relax
\Blue{\relax   
\plot
  0.0000   0.0775
  0.0200   0.0775
  0.0400   0.0774
  0.0600   0.0772
  0.0800   0.0769
  0.1000   0.0766
  0.1200   0.0762
  0.1400   0.0757
  0.1600   0.0752
  0.1800   0.0746
  0.2000   0.0739
  0.2200   0.0732
  0.2400   0.0724
  0.2600   0.0716
  0.2800   0.0708
  0.3000   0.0700
  0.3200   0.0692
  0.3400   0.0684
  0.3600   0.0677
  0.3800   0.0670
  0.4000   0.0665
  0.4200   0.0660
  0.4400   0.0657
  0.4600   0.0654
  0.4800   0.0653
  0.5000   0.0653
  0.5200   0.0655
  0.5400   0.0657
  0.5600   0.0661
  0.5800   0.0665
  0.6000   0.0670
  0.6200   0.0676
  0.6400   0.0682
  0.6600   0.0689
  0.6800   0.0696
  0.7000   0.0703
  0.7200   0.0710
  0.7400   0.0717
  0.7600   0.0724
  0.7800   0.0730
  0.8000   0.0737
  0.8200   0.0744
  0.8400   0.0750
  0.8600   0.0756
  0.8800   0.0761
  0.9000   0.0766
  0.9200   0.0770
  0.9400   0.0774
  0.9600   0.0776
  0.9800   0.0778
  1.0000   0.0778
/\relax}\relax
\Purple{\relax   
\plot
  0.0000   0.0878
  0.0200   0.0878
  0.0400   0.0876
  0.0600   0.0873
  0.0800   0.0869
  0.1000   0.0864
  0.1200   0.0857
  0.1400   0.0850
  0.1600   0.0842
  0.1800   0.0832
  0.2000   0.0822
  0.2200   0.0811
  0.2400   0.0799
  0.2600   0.0787
  0.2800   0.0775
  0.3000   0.0763
  0.3200   0.0751
  0.3400   0.0740
  0.3600   0.0729
  0.3800   0.0720
  0.4000   0.0712
  0.4200   0.0706
  0.4400   0.0701
  0.4600   0.0698
  0.4800   0.0696
  0.5000   0.0697
  0.5200   0.0698
  0.5400   0.0702
  0.5600   0.0707
  0.5800   0.0713
  0.6000   0.0720
  0.6200   0.0729
  0.6400   0.0738
  0.6600   0.0747
  0.6800   0.0757
  0.7000   0.0767
  0.7200   0.0777
  0.7400   0.0788
  0.7600   0.0798
  0.7800   0.0808
  0.8000   0.0818
  0.8200   0.0828
  0.8400   0.0838
  0.8600   0.0847
  0.8800   0.0856
  0.9000   0.0864
  0.9200   0.0870
  0.9400   0.0876
  0.9600   0.0880
  0.9800   0.0883
  1.0000   0.0883
/\relax}\relax
\Red{\relax   
\plot
  0.0000   0.0895
  0.0200   0.0895
  0.0400   0.0892
  0.0600   0.0889
  0.0800   0.0885
  0.1000   0.0879
  0.1200   0.0872
  0.1400   0.0864
  0.1600   0.0854
  0.1800   0.0844
  0.2000   0.0833
  0.2200   0.0821
  0.2400   0.0808
  0.2600   0.0795
  0.2800   0.0782
  0.3000   0.0769
  0.3200   0.0756
  0.3400   0.0744
  0.3600   0.0733
  0.3800   0.0724
  0.4000   0.0715
  0.4200   0.0709
  0.4400   0.0704
  0.4600   0.0700
  0.4800   0.0699
  0.5000   0.0699
  0.5200   0.0701
  0.5400   0.0705
  0.5600   0.0710
  0.5800   0.0716
  0.6000   0.0724
  0.6200   0.0733
  0.6400   0.0742
  0.6600   0.0752
  0.6800   0.0763
  0.7000   0.0774
  0.7200   0.0784
  0.7400   0.0796
  0.7600   0.0807
  0.7800   0.0818
  0.8000   0.0829
  0.8200   0.0839
  0.8400   0.0850
  0.8600   0.0860
  0.8800   0.0870
  0.9000   0.0879
  0.9200   0.0886
  0.9400   0.0893
  0.9600   0.0897
  0.9800   0.0901
  1.0000   0.0901
/\relax}\relax
\endpicture
}
%

%
\setbox\figurelegendthree=\hbox{
\beginpicture
  \setcoordinatesystem units <0.06\xfiglen,0.05\yfiglen> 
  \setplotarea x from 0 to 2, y from 0 to 6
\footnotesize
\linethickness=0.5pt
\scriptsize
   \setdashes <3pt>  \putrule from 0 3 to 1.25 3
   \setsolid
   \Black{\relax \putrule from 0 6 to 1 6 \relax}\relax
   \put {$\ell(x)$ actual}  [l] at 1.2 6
   \Cyan{\relax \putrule from 0 2 to 1 2 \relax}\relax
   \put {iter 2 } [l] at 1.2 2
   \Green{\relax \putrule from 0 1 to 1 1 \relax}\relax
   \put {iter 1 } [l] at 1.2 1
\setdots <3pt>
   \Black{\relax \putrule from 0 0 to 1 0 \relax}\relax
   \put {$\ell(x)$ initial } [l] at 1.2 -0.1
\endpicture
}
\setbox\figurethree=\vbox{\hsize=\xfiglen
\beginpicture
  \setcoordinatesystem units <0.4\xfiglen,10\yfiglen>  point at 0.0 0.0
  \setplotarea x from 0 to 1, y from 0.0 to 0.1
\relax
\scriptsize
  \axis bottom ticks short numbered from 0 to 1 by 0.2 /
  \axis left ticks short numbered from 0 to 0.09 by 0.01 /
\footnotesize
\put {$\ell(x)$} [l] at 0.02 0.1
\linethickness=0.6pt
\putrule from 1 0.0 to 1 0.1
\setdashes <3pt>
\putrule from 0 0.02 to 0.7 0.02
\put {$\ell_{init}$} at 0.75 0.02
\putrule from 0.8 0.02 to 1 0.02
\setquadratic
\setsolid
\Black{\relax
\plot    
  0.0000   0.0900
  0.0200   0.0899
  0.0400   0.0897
  0.0600   0.0893
  0.0800   0.0888
  0.1000   0.0881
  0.1200   0.0873
  0.1400   0.0864
  0.1600   0.0854
  0.1800   0.0843
  0.2000   0.0831
  0.2200   0.0819
  0.2400   0.0806
  0.2600   0.0794
  0.2800   0.0781
  0.3000   0.0769
  0.3200   0.0757
  0.3400   0.0746
  0.3600   0.0736
  0.3800   0.0727
  0.4000   0.0719
  0.4200   0.0712
  0.4400   0.0707
  0.4600   0.0703
  0.4800   0.0701
  0.5000   0.0700
  0.5200   0.0701
  0.5400   0.0703
  0.5600   0.0707
  0.5800   0.0712
  0.6000   0.0719
  0.6200   0.0727
  0.6400   0.0736
  0.6600   0.0746
  0.6800   0.0757
  0.7000   0.0769
  0.7200   0.0781
  0.7400   0.0794
  0.7600   0.0806
  0.7800   0.0819
  0.8000   0.0831
  0.8200   0.0843
  0.8400   0.0854
  0.8600   0.0864
  0.8800   0.0873
  0.9000   0.0881
  0.9200   0.0888
  0.9400   0.0893
  0.9600   0.0897
  0.9800   0.0899
  1.0000   0.0900
 /\relax}\relax
\linethickness=0.4pt
\setdashes <3pt>
\setsolid
\Green{\relax   
\plot
  0.0000   0.0849
  0.0200   0.0848
  0.0400   0.0846
  0.0600   0.0844
  0.0800   0.0840
  0.1000   0.0835
  0.1200   0.0829
  0.1400   0.0823
  0.1600   0.0816
  0.1800   0.0808
  0.2000   0.0799
  0.2200   0.0790
  0.2400   0.0781
  0.2600   0.0772
  0.2800   0.0763
  0.3000   0.0753
  0.3200   0.0744
  0.3400   0.0736
  0.3600   0.0728
  0.3800   0.0721
  0.4000   0.0715
  0.4200   0.0709
  0.4400   0.0705
  0.4600   0.0702
  0.4800   0.0700
  0.5000   0.0699
  0.5200   0.0700
  0.5400   0.0702
  0.5600   0.0705
  0.5800   0.0709
  0.6000   0.0715
  0.6200   0.0721
  0.6400   0.0728
  0.6600   0.0736
  0.6800   0.0745
  0.7000   0.0754
  0.7200   0.0763
  0.7400   0.0773
  0.7600   0.0782
  0.7800   0.0791
  0.8000   0.0800
  0.8200   0.0808
  0.8400   0.0816
  0.8600   0.0823
  0.8800   0.0829
  0.9000   0.0835
  0.9200   0.0839
  0.9400   0.0843
  0.9600   0.0845
  0.9800   0.0847
  1.0000   0.0847
 /\relax}\relax
\setsolid
\Cyan{\relax   
\plot
  0.0000   0.0900
  0.0200   0.0899
  0.0400   0.0897
  0.0600   0.0893
  0.0800   0.0887
  0.1000   0.0880
  0.1200   0.0872
  0.1400   0.0863
  0.1600   0.0853
  0.1800   0.0842
  0.2000   0.0830
  0.2200   0.0818
  0.2400   0.0806
  0.2600   0.0793
  0.2800   0.0781
  0.3000   0.0769
  0.3200   0.0758
  0.3400   0.0747
  0.3600   0.0737
  0.3800   0.0728
  0.4000   0.0720
  0.4200   0.0713
  0.4400   0.0708
  0.4600   0.0704
  0.4800   0.0702
  0.5000   0.0701
  0.5200   0.0702
  0.5400   0.0704
  0.5600   0.0708
  0.5800   0.0713
  0.6000   0.0720
  0.6200   0.0728
  0.6400   0.0737
  0.6600   0.0747
  0.6800   0.0758
  0.7000   0.0770
  0.7200   0.0782
  0.7400   0.0795
  0.7600   0.0807
  0.7800   0.0820
  0.8000   0.0832
  0.8200   0.0843
  0.8400   0.0853
  0.8600   0.0863
  0.8800   0.0871
  0.9000   0.0879
  0.9200   0.0885
  0.9400   0.0890
  0.9600   0.0893
  0.9800   0.0895
  1.0000   0.0893
/\relax}\relax
\endpicture
}
%

%
\setbox\figurelegendthree=\hbox{
\beginpicture
  \setcoordinatesystem units <0.06\xfiglen,0.05\yfiglen> 
  \setplotarea x from 0 to 2, y from 0 to 6
\footnotesize
\linethickness=0.5pt
\scriptsize
   \setdashes <3pt>  \putrule from 0 3 to 1.25 3
   \setsolid
   \Black{\relax \putrule from 0 6 to 1 6 \relax}\relax
   \put {$\ell(x)$ actual}  [l] at 1.2 6
   \Cyan{\relax \putrule from 0 2 to 1 2 \relax}\relax
   \put {iter 2 } [l] at 1.2 2
   \Green{\relax \putrule from 0 1 to 1 1 \relax}\relax
   \put {iter 1 } [l] at 1.2 1
\setdots <3pt>
   \Black{\relax \putrule from 0 0 to 1 0 \relax}\relax
   \put {$\ell(x)$ initial } [l] at 1.2 -0.1
\endpicture
}
\setbox\figurefour=\vbox{\hsize=\xfiglen
\beginpicture
  \setcoordinatesystem units <0.4\xfiglen,10\yfiglen>  point at 0.0 0.0
  \setplotarea x from 0 to 1, y from 0.0 to 0.1
\relax
\scriptsize
  \axis bottom ticks short numbered from 0 to 1 by 0.2 /
  \axis left ticks short numbered from 0 to 0.09 by 0.01 /
\footnotesize
\put {$\ell(x)$} [l] at 0.02 0.1
\linethickness=0.6pt
\putrule from 1 0.0 to 1 0.1
\setdashes <3pt>
\putrule from 0 0.02 to 0.7 0.02
\put {$\ell_{init}$} at 0.75 0.02
\putrule from 0.8 0.02 to 1 0.02
\setquadratic
\setsolid
\Black{\relax
\plot    
  0.0000   0.0900
  0.0200   0.0899
  0.0400   0.0897
  0.0600   0.0893
  0.0800   0.0888
  0.1000   0.0881
  0.1200   0.0873
  0.1400   0.0864
  0.1600   0.0854
  0.1800   0.0843
  0.2000   0.0831
  0.2200   0.0819
  0.2400   0.0806
  0.2600   0.0794
  0.2800   0.0781
  0.3000   0.0769
  0.3200   0.0757
  0.3400   0.0746
  0.3600   0.0736
  0.3800   0.0727
  0.4000   0.0719
  0.4200   0.0712
  0.4400   0.0707
  0.4600   0.0703
  0.4800   0.0701
  0.5000   0.0700
  0.5200   0.0701
  0.5400   0.0703
  0.5600   0.0707
  0.5800   0.0712
  0.6000   0.0719
  0.6200   0.0727
  0.6400   0.0736
  0.6600   0.0746
  0.6800   0.0757
  0.7000   0.0769
  0.7200   0.0781
  0.7400   0.0794
  0.7600   0.0806
  0.7800   0.0819
  0.8000   0.0831
  0.8200   0.0843
  0.8400   0.0854
  0.8600   0.0864
  0.8800   0.0873
  0.9000   0.0881
  0.9200   0.0888
  0.9400   0.0893
  0.9600   0.0897
  0.9800   0.0899
  1.0000   0.0900
 /\relax}\relax
\linethickness=0.4pt
\setdashes <3pt>
\setsolid
\Green{\relax   
\plot
  0.0000   0.0845
  0.0200   0.0843
  0.0400   0.0841
  0.0600   0.0839
  0.0800   0.0835
  0.1000   0.0830
  0.1200   0.0824
  0.1400   0.0818
  0.1600   0.0811
  0.1800   0.0803
  0.2000   0.0795
  0.2200   0.0786
  0.2400   0.0777
  0.2600   0.0768
  0.2800   0.0758
  0.3000   0.0749
  0.3200   0.0740
  0.3400   0.0732
  0.3600   0.0724
  0.3800   0.0717
  0.4000   0.0711
  0.4200   0.0706
  0.4400   0.0701
  0.4600   0.0698
  0.4800   0.0696
  0.5000   0.0696
  0.5200   0.0696
  0.5400   0.0698
  0.5600   0.0701
  0.5800   0.0706
  0.6000   0.0711
  0.6200   0.0717
  0.6400   0.0725
  0.6600   0.0732
  0.6800   0.0741
  0.7000   0.0750
  0.7200   0.0759
  0.7400   0.0769
  0.7600   0.0778
  0.7800   0.0787
  0.8000   0.0796
  0.8200   0.0804
  0.8400   0.0811
  0.8600   0.0818
  0.8800   0.0824
  0.9000   0.0830
  0.9200   0.0834
  0.9400   0.0838
  0.9600   0.0840
  0.9800   0.0842
  1.0000   0.0843
/\relax}\relax
\setsolid
\Cyan{\relax   
\plot
  0.0000   0.0845
  0.0200   0.0899
  0.0400   0.0897
  0.0600   0.0893
  0.0800   0.0887
  0.1000   0.0880
  0.1200   0.0872
  0.1400   0.0863
  0.1600   0.0853
  0.1800   0.0842
  0.2000   0.0830
  0.2200   0.0818
  0.2400   0.0806
  0.2600   0.0793
  0.2800   0.0781
  0.3000   0.0769
  0.3200   0.0758
  0.3400   0.0747
  0.3600   0.0737
  0.3800   0.0728
  0.4000   0.0720
  0.4200   0.0713
  0.4400   0.0708
  0.4600   0.0704
  0.4800   0.0702
  0.5000   0.0701
  0.5200   0.0702
  0.5400   0.0704
  0.5600   0.0708
  0.5800   0.0713
  0.6000   0.0720
  0.6200   0.0728
  0.6400   0.0737
  0.6600   0.0747
  0.6800   0.0759
  0.7000   0.0770
  0.7200   0.0783
  0.7400   0.0795
  0.7600   0.0808
  0.7800   0.0820
  0.8000   0.0832
  0.8200   0.0843
  0.8400   0.0853
  0.8600   0.0863
  0.8800   0.0871
  0.9000   0.0878
  0.9200   0.0884
  0.9400   0.0889
  0.9600   0.0892
  0.9800   0.0894
  1.0000   0.0836
/\relax}\relax
\endpicture
}

Figure~\ref{fig:Cauchy2}
shows reconstructions of $\ell(x)$ at 1 per cent noise.
Here the actual curve was defined by 
\begin{equation}\label{ell_act}
\ell(x) = \ell^\dagger(x):=\olell(0.8+0.1\cos(2\pi x))
\end{equation}
with $\olell=0.1$, and the starting value was far from the actual curve (taken to be at $y=0.2\olell$).

The left panel shows the case of Dirichlet conditions on the interface. 
No further progress in convergence took place after iteration~5.
The lateral boundary conditions were of homogeneous Neumann type in order to avoid singularities near the corners.

\begin{figure}[ht]
\includegraphics[width=\textwidth]{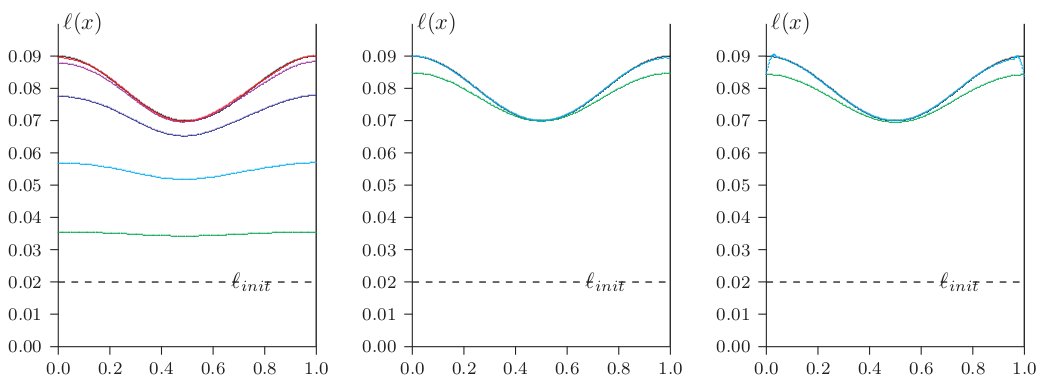}
\caption{\small {\bf Recovery of $\ell(x)$ from data in with $1\%$ noise:
Dirichlet case (left),
Neumann case (middle),
impedance case (right).
}}
\label{fig:Cauchy2}
\end{figure}
In figure~\ref{fig:Cauchy2} the 
middle and rightmost figures we show only the first (green) and
second (blue) iterations as the latter was effective convergence.
In the leftmost figure we additionally show iteration~3 (navy blue),
iteration~4 (purple) and iteration~5 (red); the latter being effective
convergence.

The relative errors at different noise levels are given in the second column of Table~\ref{tab:Cauchy2}. 

\begin{center}
\begin{table}[ht]
\begin{tabular}{|c||c|c|c|c|}
\hline
 &\multicolumn{4}{c|}{$\|\ell-\ell^\dagger\|_{L^2(\Omega)}/\|\ell^\dagger\|_{L^2(\Omega)}$}\\
\hline
noise level & (D), $\olell=0.1$ &  (D), $\olell=0.5$ &  (N), $\olell=0.1$ &  (I), $\olell=0.1$ \\
\hline
 1\% &0.0038 &0.0158 &0.0018 &0.0077\\
 2\% &0.0084 &0.0205 &0.0093 &0.0087\\
 5\% &0.0198 &0.0380 &0.0144 &0.0110\\
10\% &0.0394 &0.0735 &0.0563 &0.0158\\
\hline
\end{tabular}
\caption{Relative errors for reconstructions of $\ell$ at several noise levels. \label{tab:Cauchy2}}
\end{table}
\end{center}

The same runs were also done with a larger distance $\olell$ and the actual curve still according to \eqref{ell_act}. This resulted in the relative errors shown in column three of Table~\ref{tab:Cauchy2}.

\smallskip

The middle and right panels of Figure~\ref{fig:Cauchy2}
show  reconstructions of $\ell(x)$ in the case
of Neumann and impedance conditions ($\gamma=0.1$) on the interface.
The starting value was again relatively far from the actual curve
and there was again $1\%$ noise in the data.
No further progress in convergence took place after iteration~2.
For the relative errors at different noise levels, see the last two columns of Table~\ref{tab:Cauchy2}.

\section{Problem 3}\label{sec:Cauchy3}
\begin{problem}\label{Cauchy3}
Given two pairs of Cauchy data $(f_1,g_1)$, $(f_2,g_2)$ in
\[
\begin{aligned}
-\triangle u_j =&0  &&\mbox{ in }x\in(0,L)\,, \ y\in(0,\ell(x))\\
B_0u_j(0,y)=&0 &&\ x\in(0,\ell(0))\\ 
B_Lu_j(L,y)=&0 &&\ x\in(0,\ell(L))\\
u_j(x,0)=f_j(x)\,, \ u_{jy}(x,0)=&g_j(x)&& \ x\in\Omega
\end{aligned}
\]
for $j=1,2$,
find $\ell:(0,L)\to(0,\bar{l})$ and $\gamtil:(0,L)\to(0,\infty)$ such that 
\begin{equation}\label{eqn:Bellgamtil}
\begin{aligned}
0=B_{\ell,\gamtil}u_j:=&\partial_\nnu u_j(x,\ell(x)) + \gamtil(x) u_j(x,\ell(x))\\
=&u_{jy}(x,\ell(x))-\ell'(x) u_{jx}(x,\ell(x)) + \gamtil(x) u_j(x,\ell(x))
\quad x\in(0,L)
\end{aligned}
\end{equation}
where $\gamtil$ is the combined coefficient defined by $\gamtil(x)=\sqrt{1+\ell'(x)^2}\gamma(x)$.
(Note that $\nnu$ is the non-normalised outer normal direction, but for the zero level set this does not matter.)
\end{problem}

The setting is as in Problem~\ref{Cauchy2} otherwise and using the notation \eqref{Dofell}
we rewrite the forward problem as
\begin{equation}\label{eqn:forwardCauchy3}
\begin{aligned}
-\triangle u_j =&0 && \ \mbox{ in }D(\ell)\\
u_j=&f_j&&\ \mbox{ on }\Gamma_1\\
Bu_j=&0&&\ \mbox{ on }\Gamma_2(\ell)\\
B_{\ell,\gamtil}u_j
=&0&& \ \mbox{ on }\Gamma_0(\ell).
\end{aligned}
\end{equation}
Note that $u_j$ actually satisfies the Poisson equation on the hold-all domain $D(\olell)$ with a fixed upper boundary defined by $\olell\geq\ell$.
We also point out that using the weak form of the forward problem
\[
\begin{aligned}
&u-f_j\in H_\diamondsuit^1(D(\ell)):=\{w\in H^1(\Omega)\,: \, Bw=0\}\text{ on }\Gamma_2(\ell)\,, \ w=0\text{ on }\Gamma_2\} \\
&\text{ and for all }w\in H_\diamondsuit^1(D(\ell))\, : \
\int_0^L\Bigl(\int_0^{\ell(x)}(\nabla u\cdot\nabla w)(x,y)\, dy +  \gamtil(x) (u\cdot w)(x,\ell(x))\Bigr)\, dx = 0
\end{aligned}
\]
no derivative of $\ell$ nor $\gamtil$ is needed for computing $u$.

The forward operator $F=(F_1,F_2)$ is defined by
\[
F_j:\ell\mapsto u_{jy}(x,0)-g_j(x)\mbox{ where $u_j$ solves \eqref{eqn:forwardCauchy3}, }j\in\{1,2\}. 
\]
Its linearisation is defined by $F_j'(\ell)[\dl,\dgam]=v_{j\,y}\vert_{\Gamma_1}$, where $v_j$ solves
\[
\begin{aligned}
-\triangle v_j =&0&&\ \mbox{ in }D(\ell)\\
v_j=&0&&\ \mbox{ on }\Gamma_1\\
Bv_j=&0&&\ \mbox{ on }\Gamma_2(\ell)\\
B_{\ell,\gamtil}v_j
=&G(u_j,\ell,\gamtil)(\dl,\dgam) &&\ \mbox{ on }\Gamma_0(\ell)
\end{aligned}
\]
cf., \eqref{eqn:Bellgamtil}, where 
\begin{equation}\label{eqn:G}
\begin{aligned}
G(u_j,\ell,\gamtil)(\dl,\dgam))(x)
:=& -\bigl(u_{j\,yy}(x,\ell(x))-\ell'(x)u_{j\,xy}(x,\ell(x))+\gamtil(x) u_{j\,y}(x,\ell(x))\bigr)\dl\\
&+\dl'(x)u_{j\,x}(x,\ell(x))
-\dgam(x)u_j(x,\ell(x)).
\end{aligned}
\end{equation}
Using the PDE we have the identity
\[
G(u_j,\ell,\gamtil)(\dl,\dgam)
=\tfrac{d}{dx}\Bigl[\dl(x)\,u_{jx}(x,\ell(x))\Bigr]
-\dl(x)\gamtil(x)u_{jy}(x,\ell(x))-\dgam(x) u_j(x,\ell(x)).
\]

Thus, computation of a Newton step $(\dl,\dgam)=(\dl^{(k)},\dgam^{(k)})$ starting from some iterate $(\ell^{(k)},\gamtil^{(k)})=(\ell,\gamtil)$ amounts to solving the system
\begin{equation}\label{eqn:Newtonstep_Cauchy3}
\left.\begin{array}{rll}
-\triangle z_j =&0&  \mbox{ in }D(\ell)\\
z_j=&f_j&\mbox{ on }\Gamma_1\\
Bz_j=&0&\mbox{ on }\Gamma_2(\ell)\\
z_{j\,y}-g_j=&0&\mbox{ on }\Gamma_1\\
B_{\ell,\gamtil}z_j
=&G(u_j,\ell,\gamtil)(\dl,\dgam)&\mbox{ on }\Gamma_0(\ell)
\end{array}\right\}\quad j\in\{1,2\}
\end{equation}
(note that $z_j=u_j+v_j$ and $z_{j\,y}\vert_{\Gamma_1}-g_j=F_j(\ell)+F_j'(\ell)(\dl,\dgam)$). \hfill\break

With pre-computed (regularised) solutions $\bar{z}_j$ of the Cauchy problem on the rectangular hold-all domain
\begin{equation}\label{eqn:zbarj}
\begin{aligned}
-\triangle \bar{z}_j =&0  \mbox{ in }D(\olell)=(0,L)\times(0,\olell)\\
\bar{z}_j=&f_j\mbox{ on }\Gamma_1\\
B\bar{z}_j=&0\mbox{ on }\Gamma_2(\olell)=\{0,L\}\times(0,\olell)\\
\bar{z}_{j\,y}-g_j=&0\mbox{ on }\Gamma_1\\
\end{aligned}
\end{equation}
this reduces to resolving the following system for $(\dl,\dgam)$ on $\Gamma_0(\ell)$
\begin{equation}\label{eqn:sysNewtonCauchy3}
\begin{aligned}
G(u_1,\ell,\gamtil)(\dl,\dgam)=& B_{\ell,\gamtil}\bar{z}_1\\
G(u_2,\ell,\gamtil)(\dl,\dgam)=& B_{\ell,\gamtil}\bar{z}_2.
\end{aligned}
\end{equation}

To obtain more explicit expressions for $\dl$ and $\dgam$ from \eqref{eqn:sysNewtonCauchy3}, one can apply an elimination strategy, that is, multiply the boundary condition on $\Gamma_0(\ell)$ with $u_{j\pm1}(x)$ 
and subtract, to obtain 
\[
\begin{aligned}
B_{\ell,\gamtil}v_1 \, u_2 - B_{\ell,\gamtil}v_2 \, u_1 
&=u_2\tfrac{d}{dx}\Bigl[\dl\,u_{1x}\Bigr]
-u_1\tfrac{d}{dx}\Bigl[\dl\,u_{2x}\Bigr]-\dl\gamtil\Bigl(u_{1y}u_2-u_{2y}u_1\Bigr)\\
&= \tfrac{d}{dx}\Bigl[\dl\Bigl(u_{1x}u_2-u_{2x}u_1\Bigr)\Bigr]
-\dl\Bigl(\ell'\bigl(u_{1x}u_{2y}-u_{2x}u_{1y}\bigr)
+\gamtil\bigl(u_{1y}u_2-u_{2y}u_1\bigr)
\Bigr)\\
&=:\tfrac{d}{dx}\Bigl[\dl\altil\Bigr]-\dl\betil
\end{aligned}
\]
where we have skipped the arguments $(x)$ of $b_1$, $b_2$, $\dl$, $\dgam$, $\ell'$ and $(x,\ell(x))$ of $u_1$, $u_2$ and its derivatives for better readability. 

With the pre-computed (regularised) solutions $\bar{z}_j$ of the Cauchy problem \eqref{eqn:zbarj}
one can therefore carry out a Newton step by computing $u_j^{(k)}=u_j$ from the well-posed mixed elliptic boundary value problem (*) with $\ell=\ell^{(k)}$, $\gamtil=\gamtil^{(k)}$ and updating
\begin{equation}\label{eqn:dl}
\begin{aligned}
&\ell^{(k+1)}(x)=\ell^{(k)}(x)-\dl(x)\\
\mbox{where\quad}\dl(x)&=\tfrac{1}{\altil[\ell^{(k)},u^{(k)}](x)}
\Bigl\{
\exp\Bigl(-\int_0^x\tfrac{\betil[\ell^{(k)},u^{(k)}]}{\altil[\ell^{(k)},u^{(k)}]}(s)\, ds\Bigr) \dl(0)\altil[\ell^{(k)},u^{(k)}](0)\\
&\hspace*{2cm}+\int_0^x \tilde{b}(s) \exp\Bigl(-\int_s^x\tfrac{\betil[\ell^{(k)},u^{(k)}]}{\altil[\ell^{(k)},u^{(k)}]}(t)\, dt\Bigr)\, ds
\Bigr\}\\
\mbox{with \quad}
\tilde{b}(x)&=\bigl(\partial_\nnu \bar{z}_1(x,\ell^{(k)}(x))+\gamtil^{(k)}(x)\bar{z}_1(x,\ell^{(k)}(x))\bigr)u_2^{(k)}(x,\ell^{(k)}(x))\\
&\hspace*{2cm}-\bigl(\partial_\nnu \bar{z}_2(x,\ell^{(k)}(x))+\gamtil^{(k)}(x)\bar{z}_2(x,\ell^{(k)}(x))\bigr)u_1^{(k)}(x,\ell^{(k)}(x))
\end{aligned}
\end{equation}
\begin{equation}\label{eqn:dgam}
\begin{aligned}
\gamtil^{(k+1)}(x)&=\gamtil^{(k)}(x)-\dgam(x)\\
\mbox{where\quad}\dgam(x)&=\tfrac{1}{u_1^{(k)}(x,\ell^{(k)}(x))}\Bigl(\tfrac{d}{dx}\Bigl[\dl(x)\,u_{1x}^{(k)}(x,\ell^{(k)}(x))\Bigr]
-\dl(x)\gamtil^{(k)}(x)u_{1y}^{(k)}(x,\ell(x))-b_1(x)\Bigr)\\
\hspace*{2.2cm}&=\tfrac{1}{u_1^{(k)}(x,\ell^{(k)}(x))}\Bigl(\tfrac{d}{dx}\Bigl[\dl(x)\altil[\ell^{(k)},u^{(k)}](x)\Bigr]\\
&\hspace*{1cm}
-\dl(x)\Bigl(\gamtil^{(k)}(x)u_{1y}^{(k)}(x,\ell(x))+\tfrac{d}{dx}\Bigl[\tfrac{u_{1x}^{(k)}(x,\ell^{(k)}(x))}{\altil[\ell^{(k)},u^{(k)}](x)}\Bigr]\Bigr)-b_1(x)\Bigr)
\end{aligned}
\end{equation}
Since this elimination procedure needs second derivative computations,
we do not use it for the  reconstruction but stay with a simultaneous
computation of the Newton step $(\dl,\dgam)$ from \eqref{eqn:sysNewtonCauchy3}.
Still, \eqref{eqn:dl}, \eqref{eqn:dgam} will be useful in the proof of
linearised uniqueness to follow.

\subsection{Reconstructions}\label{sec:reconCauchy3}

\input colordvi

\input pictex
\font\smallsymbol = cmmi8
\newdimen\xfiglen \newdimen\yfiglen
\xfiglen=2.7 true in
\yfiglen=2.5 true in
%
%
\setbox\figurefour=\vbox{\hsize=0.6true in
\beginpicture
  \setcoordinatesystem units <0.06\xfiglen,0.05\yfiglen> 
  \setplotarea x from 0 to 2, y from 0 to 2
\linethickness=0.5pt
\scriptsize
   \setsolid
   \Black{\relax \putrule from 0 2 to 1 2 \relax}\relax
   \put {actual}  [l] at 1.2 2
   \Red{\relax \putrule from 0 1 to 1 1 \relax}\relax
   \put {reconstruction}  [l] at 1.2 1
\endpicture
}
\setbox\figurefive=\vbox{\hsize=\xfiglen   
\beginpicture
  \setcoordinatesystem units <\xfiglen,12.0\yfiglen>  point at 0.0 0.05
  \setplotarea x from 0 to 1, y from 0.05 to 0.1
\scriptsize
  \axis bottom ticks short numbered from 0 to 1 by 0.2 /
  \axis left ticks short numbered from 0.05 to 0.1 by 0.01 /
\put {0.05} [r] at -0.02 0.05 \putrule from -0.01 0.05 to 0 0.05
\put {0.10} [r] at -0.02 0.1  \putrule from -0.01 0.1 to 0 0.1
\footnotesize
\put {$\ell(x)$} [l] at 0.02 0.1
\linethickness=0.6pt
\setsolid
\put {\copy\figurefour} [rb] at 1.0 0.055
\setquadratic
\Black{
\plot    
  0.0000   0.0900
  0.0200   0.0899
  0.0400   0.0897
  0.0600   0.0893
  0.0800   0.0888
  0.1000   0.0881
  0.1200   0.0873
  0.1400   0.0864
  0.1600   0.0854
  0.1800   0.0843
  0.2000   0.0831
  0.2200   0.0819
  0.2400   0.0806
  0.2600   0.0794
  0.2800   0.0781
  0.3000   0.0769
  0.3200   0.0757
  0.3400   0.0746
  0.3600   0.0736
  0.3800   0.0727
  0.4000   0.0719
  0.4200   0.0712
  0.4400   0.0707
  0.4600   0.0703
  0.4800   0.0701
  0.5000   0.0700
  0.5200   0.0701
  0.5400   0.0703
  0.5600   0.0707
  0.5800   0.0712
  0.6000   0.0719
  0.6200   0.0727
  0.6400   0.0736
  0.6600   0.0746
  0.6800   0.0757
  0.7000   0.0769
  0.7200   0.0781
  0.7400   0.0794
  0.7600   0.0806
  0.7800   0.0819
  0.8000   0.0831
  0.8200   0.0843
  0.8400   0.0854
  0.8600   0.0864
  0.8800   0.0873
  0.9000   0.0881
  0.9200   0.0888
  0.9400   0.0893
  0.9600   0.0897
  0.9800   0.0899
  1.0000   0.0900
 /\relax}\relax
%
\setsolid
\Red{\relax   
\plot
  0.0000   0.0900
  0.0200   0.0899
  0.0400   0.0898
  0.0600   0.0897
  0.0800   0.0896
  0.1000   0.0894
  0.1200   0.0891
  0.1400   0.0886
  0.1600   0.0879
  0.1800   0.0870
  0.2000   0.0859
  0.2200   0.0845
  0.2400   0.0830
  0.2600   0.0814
  0.2800   0.0797
  0.3000   0.0781
  0.3200   0.0765
  0.3400   0.0750
  0.3600   0.0737
  0.3800   0.0725
  0.4000   0.0716
  0.4200   0.0708
  0.4400   0.0703
  0.4600   0.0700
  0.4800   0.0698
  0.5000   0.0699
  0.5200   0.0701
  0.5400   0.0705
  0.5600   0.0711
  0.5800   0.0718
  0.6000   0.0726
  0.6200   0.0735
  0.6400   0.0744
  0.6600   0.0754
  0.6800   0.0763
  0.7000   0.0773
  0.7200   0.0784
  0.7400   0.0795
  0.7600   0.0807
  0.7800   0.0821
  0.8000   0.0836
  0.8200   0.0850
  0.8400   0.0864
  0.8600   0.0876
  0.8800   0.0885
  0.9000   0.0893
  0.9200   0.0898
  0.9400   0.0901
  0.9600   0.0903
  0.9800   0.0903
  1.0000   0.0900
 /\relax}\relax
\setsolid
\setdashes <3pt>
\Black{
\plot    
  0.0000   0.0900
  0.0200   0.0900
  0.0400   0.0900
  0.0600   0.0900
  0.0800   0.0900
  0.1000   0.0900
  0.1200   0.0900
  0.1400   0.0900
  0.1600   0.0900
  0.1800   0.0900
  0.2000   0.0900
  0.2200   0.0900
  0.2400   0.0900
  0.2600   0.0900
  0.2800   0.0900
  0.3000   0.0900
  0.3200   0.0900
  0.3400   0.0900
  0.3600   0.0900
  0.3800   0.0900
  0.4000   0.0900
  0.4200   0.0900
  0.4400   0.0900
  0.4600   0.0900
  0.4800   0.0900
  0.5000   0.0900
  0.5200   0.0900
  0.5400   0.0900
  0.5600   0.0900
  0.5800   0.0900
  0.6000   0.0900
  0.6200   0.0900
  0.6400   0.0900
  0.6600   0.0900
  0.6800   0.0900
  0.7000   0.0900
  0.7200   0.0900
  0.7400   0.0900
  0.7600   0.0900
  0.7800   0.0900
  0.8000   0.0900
  0.8200   0.0900
  0.8400   0.0900
  0.8600   0.0900
  0.8800   0.0900
  0.9000   0.0900
  0.9200   0.0900
  0.9400   0.0900
  0.9600   0.0900
  0.9800   0.0900
  1.0000   0.0900
 /\relax}\relax
\endpicture
}
\setbox\figuresix=\vbox{\hsize=\xfiglen  
\beginpicture
  \setcoordinatesystem units <\xfiglen,\yfiglen>  point at 0.0 0.8
  \setplotarea x from 0 to 1, y from 0.8 to 1.4
\relax
\scriptsize
  \axis bottom ticks short numbered from 0 to 1 by 0.2 /
  \axis left ticks short numbered from 0.8 to 1.4 by 0.2 /
\footnotesize
\put {$\gamma(x)$} [l] at 0.02 1.4
\linethickness=0.6pt
\setsolid
\setquadratic
\Black{
\plot    
  0.0000   1.0000
  0.0200   1.0000
  0.0400   1.0001
  0.0600   1.0003
  0.0800   1.0005
  0.1000   1.0007
  0.1200   1.0011
  0.1400   1.0039
  0.1600   1.0142
  0.1800   1.0375
  0.2000   1.0769
  0.2200   1.1307
  0.2400   1.1914
  0.2600   1.2479
  0.2800   1.2879
  0.3000   1.3023
  0.3200   1.2876
  0.3400   1.2472
  0.3600   1.1905
  0.3800   1.1296
  0.4000   1.0757
  0.4200   1.0363
  0.4400   1.0130
  0.4600   1.0029
  0.4800   1.0002
  0.5000   1.0000
  0.5200   1.0000
  0.5400   1.0001
  0.5600   1.0003
  0.5800   1.0005
  0.6000   1.0007
  0.6200   1.0009
  0.6400   1.0012
  0.6600   1.0014
  0.6800   1.0016
  0.7000   1.0018
  0.7200   1.0019
  0.7400   1.0020
  0.7600   1.0020
  0.7800   1.0019
  0.8000   1.0018
  0.8200   1.0016
  0.8400   1.0014
  0.8600   1.0012
  0.8800   1.0009
  0.9000   1.0007
  0.9200   1.0005
  0.9400   1.0003
  0.9600   1.0001
  0.9800   1.0000
  1.0000   1.0000
 /\relax}\relax
\Red{\relax
\plot    
  0.0000   1.0116
  0.0200   1.0109
  0.0400   1.0085
  0.0600   1.0054
  0.0800   1.0034
  0.1000   1.0046
  0.1200   1.0112
  0.1400   1.0250
  0.1600   1.0466
  0.1800   1.0757
  0.2000   1.1104
  0.2200   1.1480
  0.2400   1.1848
  0.2600   1.2168
  0.2800   1.2405
  0.3000   1.2528
  0.3200   1.2521
  0.3400   1.2378
  0.3600   1.2113
  0.3800   1.1750
  0.4000   1.1326
  0.4200   1.0883
  0.4400   1.0462
  0.4600   1.0102
  0.4800   0.9832
  0.5000   0.9665
  0.5200   0.9604
  0.5400   0.9634
  0.5600   0.9733
  0.5800   0.9868
  0.6000   1.0005
  0.6200   1.0115
  0.6400   1.0178
  0.6600   1.0187
  0.6800   1.0148
  0.7000   1.0079
  0.7200   1.0001
  0.7400   0.9931
  0.7600   0.9877
  0.7800   0.9835
  0.8000   0.9804
  0.8200   0.9783
  0.8400   0.9777
  0.8600   0.9795
  0.8800   0.9840
  0.9000   0.9913
  0.9200   1.0005
  0.9400   1.0105
  0.9600   1.0203
  0.9800   1.0311
  1.0000   1.0794
 /\relax}\relax
\setdashes <3pt>
\Black{
\plot    
  0.0000   1.0000
  0.0200   1.0000
  0.0400   1.0000
  0.0600   1.0000
  0.0800   1.0000
  0.1000   1.0000
  0.1200   1.0000
  0.1400   1.0000
  0.1600   1.0000
  0.1800   1.0000
  0.2000   1.0000
  0.2200   1.0000
  0.2400   1.0000
  0.2600   1.0000
  0.2800   1.0000
  0.3000   1.0000
  0.3200   1.0000
  0.3400   1.0000
  0.3600   1.0000
  0.3800   1.0000
  0.4000   1.0000
  0.4200   1.0000
  0.4400   1.0000
  0.4600   1.0000
  0.4800   1.0000
  0.5000   1.0000
  0.5200   1.0000
  0.5400   1.0000
  0.5600   1.0000
  0.5800   1.0000
  0.6000   1.0000
  0.6200   1.0000
  0.6400   1.0000
  0.6600   1.0000
  0.6800   1.0000
  0.7000   1.0000
  0.7200   1.0000
  0.7400   1.0000
  0.7600   1.0000
  0.7800   1.0000
  0.8000   1.0000
  0.8200   1.0000
  0.8400   1.0000
  0.8600   1.0000
  0.8800   1.0000
  0.9000   1.0000
  0.9200   1.0000
  0.9400   1.0000
  0.9600   1.0000
  0.9800   1.0000
  1.0000   1.0000
 /\relax}\relax
\endpicture
}

In Figure~\ref{fig:ell_and_gamma}
we show a simultaneous reconstruction of $\ell(x)$ and $\gamtil(x)$
obtained from excitations with $f_1(x)=1 + x + x^2$ and $f_2(x)=4x^2 - 3x^3$
(chosen to comply with the impedance conditions $Bu=\partial_\nu u+u=0$ at $x\in\{0,L\}$, $L=1$).

\begin{figure}[ht]
\includegraphics[width=\textwidth]{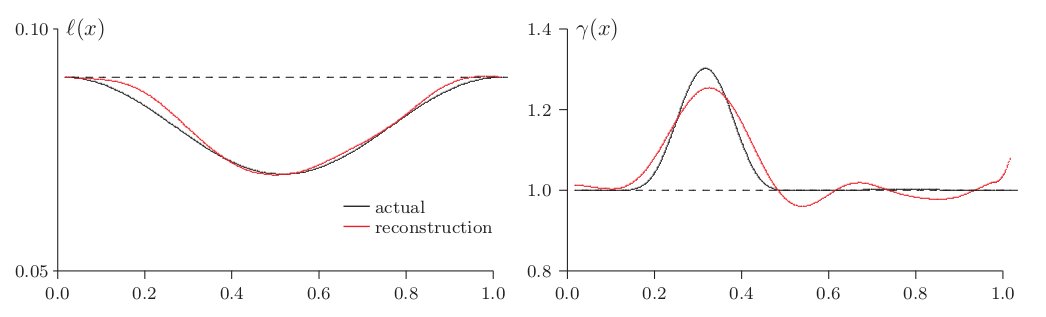}
\caption{\small {\bf Simultaneous recovery of $\ell(x)$ and $\gamtil(x)$}}
\label{fig:ell_and_gamma}
\end{figure}

Assuming the boundary values of $\ell$ and $\gamtil$ to be known, we used them in our (linear; in the concrete setting constant) starting guesses.
The actual values are shown in black (solid), the reconstructed in red, the starting guesses as dashed lines.

The relative errors at different noise levels are listed in Table~\ref{tab:Cauchy3}.
\begin{center}
\begin{table}[ht]
\begin{tabular}{|c||c|c|}
\hline
noise level &$\|\ell-\ell^\dagger\|_{L^2(\Omega)}/\|\ell^\dagger\|_{L^2(\Omega)}$&
$\|\gamtil-\gamtil^\dagger\|_{L^2(\Omega)}/\|\gamtil^\dagger\|_{L^2(\Omega)}$\\
\hline
 1\% &0.0145 &0.0251\\
 2\% &0.0152 &0.0263\\
 5\% &0.0191 &0.0355\\
10\% &0.0284 &0.0587\\
\hline
\end{tabular}
\caption{Relative errors for reconstructions of $\ell$ and $\gamtil$ at several noise levels. \label{tab:Cauchy3}}
\end{table}
\end{center}

While the numbers suggest approximately logarithmic convergence 
and this is in line with what is to be expected for such a severely ill-posed
problem, our result  in Section~\ref{sec:convCauchy3} is about convergence only.
Proving rates would go beyond the scope of the objective of this paper.

\subsection{Convergence}\label{sec:convCauchy3}
Proving convergence of iterative regularisation methods for nonlinear ill-posed problems always requires certain conditions on the nonlinearity of the forward operator.
These can be verified for a slightly modified formulation of the problem; in particular, instead of a reduced formulation involving the parameter-to-state map as used in Section~\ref{sec:reconCauchy3}, we consider the all-at-once formulation  (involving a second copy of $\gamtil$)
\begin{equation}\label{FtilP}
\begin{aligned}
\Ftil_1(u_1,\ell,\gamtil_1)=&0 \\ 
\Ftil_2(u_2,\ell,\gamtil_2)=&0 \\ 
P(\gamtil_1,\gamtil_2)=&0,
\end{aligned}
\end{equation}
where 
\begin{equation}\label{eqn:Ftilj}
\Ftil_j:(u_j,\ell,\gamtil_j)\mapsto
\left(\begin{array}{c}
-\triangle u_j \\
u_j\vert_{\Gamma_1}-f_j\\
u_{j\,y}\vert_{\Gamma_1}-g_j\\
Bu_j\vert_{\Gamma_2(\ell)}\\
B_{\ell,\gamtil}u_j\vert_{\Gamma_0(\ell)}
\end{array}\right)\,,\ j\in\{1,2\}, \quad
P(\gamtil_1,\gamtil_2):=(\gamtil_1-\gamtil_2,\ell(0)-\ell^0)
\end{equation}
with a given endpoint value $\ell^0$.
(The first component of $\Ftil_j$ means that we consider the Poisson equation on the hold-all domain $D(\olell)$.)

\subsubsection*{Range invariance of the forward operator}
The forward operator $\Ftil:\mathbb{X}\to\mathbb{Y}$ defined by $\Ftil(u_1,u_2,\ell,\gamtil_1,\gamtil_2)=(\Ftil_1(u_1,\ell,\gamtil_1),\Ftil_2(u_2,\ell,\gamtil_2)$ along with its linearisation
\begin{equation}\label{eqn:Ftilprime}
\Ftil_j'(u_j,\ell,\gamtil_j)(\du_j,\dl,\dgam_j) =
\left(\begin{array}{c}
-\triangle \du_j \\
\du_j\vert_{\Gamma_1}\\
\du_{j\,y}\vert_{\Gamma_1}\\
B\du_j\vert_{\Gamma_2(\ell)}\\
B_{\ell,\gamtil_j}\du_j\vert_{\Gamma_0(\ell)}-G(u_j,\ell,\gamtil)(\dl,\dgam)
\end{array}\right)
\end{equation}
and the operator $r=(r_{u\,1},r_{u\,2},r_\ell,r_{\gamtil\,1},r_{\gamtil\,2})$
defined so that 
$[B_{\ell,\gamtil_j}-B_{\ell_0,\gamtil_{0,j}}]u_j
+G(u_{0,j},\ell_0,\gamtil_{0,j})(\ell-\ell_0),r_{\gamma\,j}(u_1,u_2,\ell,\gamtil_1,\gamtil_2))=0$, that is, 
\[
\begin{aligned}
&r_{u\,j}(u_1,u_2,\ell,\gamtil_1,\gamtil_2)=u_j-u_{0,j}
\\
&r_\ell(u_1,u_2,\ell,\gamtil_1,\gamtil_2)=\ell-\ell_0\\
&r_{\gamma\,j}(u_1,u_2,\ell,\gamtil_1,\gamtil_2)=
\frac{1}{u_{0,j}(\ell_0)}\\
&\quad\times\Bigl(
-\bigl(u_{0,j\,yy}(\ell_0)-\ell_0'(x)u_{0,j\,xy}(\ell_0)+\gamtil_0(x) u_{0,j\,y}(\ell_0)\bigr)(\ell-\ell_0)+(\ell'-\ell_0')u_{0,j\,x}(\ell_0)\\
&\qquad+ \bigl(u_{j\,y}(\ell)-\ell'(x)u_{j\,x}(\ell)+\gamtil(x) u_{j}(\ell)\bigr)
-\bigl(u_{j\,y}(\ell_0)-\ell_0'(x)u_{j\,x}(\ell_0)+\gamtil_0(x) u_{j}(\ell_0)\bigr)
\Bigr)
\end{aligned}
\]
$j\in\{1,2\}$, satisfies the differential range invariance condition
\begin{equation}\label{rangeinvar_diff}
\textup{for all } \xi\in U \, \exists r(\xi)\in \mathbb{X}:=V^2\times X^\ell\times (X^\gamtil)^2\,: \
\Ftil(\xi)-\Ftil(\xi_0)= \Ftil'(\xi_0)\,r(\xi),
\end{equation}
in a neighborhood $U$ of the reference point $\xi_0:=(u_{0,1},u_{0,2},\ell_0,\gamtil_{0,1},\gamtil_{0,2})$. 
Here we use the abbreviation $u(\ell)$ for $(u(\ell))(x)=u(x,\ell(x))$ and 
\[
\xi := (u_1,u_2,\ell,\gamtil_1,\gamtil_2).
\]
The difference between $r$ and the shifted identity can be written as 
\[
\begin{aligned}
&r(u_1,u_2,\ell,\gamtil_1,\gamtil_2)-\bigl((u_1,u_2,\ell,\gamtil_1,\gamtil_2)
-(u_{0,1},u_{0,2},\ell_0,\gamtil_{0,1},\gamtil_{0,2})\bigr)\\
&=\bigl(0,0,0,\tfrac{1}{u_{0,1}(\ell_0)}(I_1+II_1+III_1),\tfrac{1}{u_{0,2}(\ell_0)}(I_2+II_2+III_2)\bigr)^T, 
\end{aligned}
\]
with 
\[
\begin{aligned}
I_j=&\gamtil_j(u_j(\ell)-u_{0,j}(\ell_0)-\gamtil_{0,j}u_{0,j\,y}(\ell_0)(\ell-\ell_0)\\
=& \gamtil_j\Bigl((u_j-u_{0,j})(\ell)-(u_j-u_{0,j})(\ell_0)\Bigr)
+(\gamtil_j-\gamtil_{0,j})(u_{0,j}(\ell)-u_{0,j}(\ell_0))\\
&\qquad+\gamtil_{0,j}\Bigl(u_{0,j}(\ell)-u_{0,j}(\ell_0)-u_{0,j\,y}(\ell_0)(\ell-\ell_0)\Bigr)
\\
II_j=&u_{0,j\,y}(\ell)-u_{0,j\,y}(\ell_0)-u_{0,j\,yy}(\ell_0)(\ell-\ell_0)
\\
III_j=&-\ell_{0,j}'\Bigl(u_{0,j\,x}(\ell)-u_{0,j\,x}(\ell_0)-u_{0,j\,xy}(\ell_0)(\ell-\ell_0)\Bigr)\\
&\qquad-\ell'\Bigl((u_{j\,x}-u_{0,j\,x})(\ell)-(u_{j\,x}-u_{0,j\,x})(\ell_0)\Bigr)
\\
&\qquad-(\ell'-\ell_0')\Bigl(u_{0,j\,x}(\ell)-u_{0,j\,x}(\ell_0)\, + \, 
u_{j\,x}(\ell_0)-u_{0,j\,x}(\ell_0)\Bigr).
\end{aligned}
\]
The function spaces are supposed to be chosen according to   
\[
V \subseteq W^{2,\infty}(D(\olell)), \quad 
X^\ell= W^{1,p}(0,L), \quad 
X^\gamtil= L^p(0,L), \quad \text{ with }p>1 
\]
so that $W^{1,p}(0,L)$ continuously embeds into $L^\infty(0,L)$, 
\footnote{which also allows to guarantee $\ell\leq\olell$ for all $\ell$ with $\|\ell-\ell_0\|_{X^\ell}$ small enough} 
e.g. 
\begin{equation}\label{eqn:spacesHilbert}
X^\ell= H^1(0,L), \quad 
X^\gamtil= L^2(0,L),
\end{equation}
in order to work with Hilbert spaces.
In this setting, using
\[
\begin{aligned}
&|v(x,\ell(x))-v(x,\ell_0(x))|
=\left|\int_0^1v_y(x,\ell_0(x)+s(\ell(x)-\ell_0(x)))\, ds\, (\ell(x)-\ell_0(x))\right|\\
&\qquad\leq \|v_y\|_{L^\infty(D(\bar(\ell)))}\, |\ell(x)-\ell_0(x)|\\[1ex]
&|v(x,\ell(x))-v(x,\ell_0(x))-v_y(x,\ell_0(x))|\\
&\qquad=\left|\int_0^1\int_0^1v_{yy}(x,\ell_0(x)+s's(\ell(x)-\ell_0(x)))\,s\, ds\, ds'\, 
(\ell(x)-\ell_0(x))^2\right|\\
&\qquad \leq \|v_{yy}\|_{L^\infty(D(\bar(\ell)))}\, |\ell(x)-\ell_0(x)|^2
\end{aligned}
\]
we can further estimate 
\[
\begin{aligned}
\|I_j\|_{L^p(0,L)}
\leq&\|\gamtil_j\|_{L^p(0,L)} \|u_{j\,y}-u_{0,j\,y}\|_{L^\infty(D(\olell)}
\|\ell-\ell_0\|_{L^\infty(0,L)}\\
&+\|\gamtil_j-\gamtil_{0,j}\|_{L^p(0,L)} \|u_{0,j\,y}\|_{L^\infty(D(\olell)}
\|\ell-\ell_0\|_{L^\infty(0,L)}\\
&+\tfrac12\|\gamtil_{0,j}\|_{L^p(0,L)} \|u_{0,j\,yy}\|_{L^\infty(D(\olell)}
\|\ell-\ell_0\|_{L^\infty(0,L)}^2
\\
\|II_j\|_{L^p(0,L)}
\leq& L^{1/p}\Bigl(
\tfrac12\|u_{0,j\,yyy}\|_{L^\infty(D(\olell)} \|\ell-\ell_0\|_{L^\infty(0,L)}^2
+\|u_{j\,yy}-u_{0,j\,yy}\|_{L^\infty(D(\olell)} \|\ell-\ell_0\|_{L^\infty(0,L)}
\Bigr)
\\
\|III_j\|_{L^p(0,L)}
\leq&\|\ell_{0,j}'\|_{L^p(0,L)} \|u_{0,j\,xyy}\|_{L^\infty(D(\olell)} \|\ell-\ell_0\|_{L^\infty(0,L)}^2\\
&+\|\ell'\|_{L^p(0,L)} \|u_{j\,xy}-u_{0,j\,xy}\|_{L^\infty(D(\olell)}
\|\ell-\ell_0\|_{L^\infty(0,L)}\\ 
&+\|\ell'-\ell_0'\|_{L^p(0,L)} \Bigl(\|u_{0,j\,xy}\|_{L^\infty(D(\olell)}
\|\ell-\ell_0\|_{L^\infty(0,L)}
+ \|u_{j\,x}(\ell_0)-u_{0,j\,x}(\ell_0)\|_{L^\infty(0,L)}\Bigr).
\end{aligned}
\]
Altogether, choosing $u_{0,j}$ to be smooth enough and bounded away from zero on $\ell_0$ \footnote{Note that in our  all-at-once setting, $u_{0,j}$ does not necessarily need to satisfy a PDE, which (up to closeness to $u^\dagger$) allows for plenty of freedom in its choice.}
\begin{equation}\label{u0}
\tfrac{1}{u_{0,j}(\ell_0)}\in L^\infty(0,L) \,, \quad 
u_{0,j\,y}, \,u_{0,j\,yy}\in W^{1,\infty}(D(\olell)
\,, \quad j\in\{1,2\},
\end{equation}
we have shown that 
\begin{equation}\label{rid}
\|r(\xi)-(\xi-\xi_0)\|_{V^2\times X^\ell\times (X^\gamtil)^2}
\leq C \|\xi-\xi_0\|_{V^2\times X^\ell\times (X^\gamtil)^2}^2
\end{equation}
for some $C>0$. 
Analogously to, e.g., \cite{nonlinearity_imaging_both,nonlinearity_imaging_2d} this provides us with the estimate
\begin{equation}\label{rid_cr}
\begin{aligned}
\exists\, c_r\in(0,1)\, \forall \xi\in U(\subseteq V^2\times X^\ell\times (X^\gamtil)^2)\, : \;&
\|r(\xi)-r(\xi^\dagger)-(\xi-\xi^\dagger)\|_X\\
&\leq c_r\|\xi-\xi^\dagger\|_X
\end{aligned}
\end{equation}
where $\xi^\dagger$ is the actual solution.

\subsubsection*{Linearised uniqueness}
Results on uniqueness of Problem~\ref{Cauchy3} can be found in \cite{Bacchelli:2009,Rundell2008a}. In particular, linear independence of the functions $g_1$, $g_2$ is sufficient for determining both $\ell$ and $\gamtil$ in \eqref{eqn:Bellgamtil}.

Here we will show linearised uniqueness, as this is another ingredient of the convergence proof. More precisely, we show that the intersection of the nullspaces of the linearised forward operator $\Ftil'(u_{0,1},u_{0,2},\ell_0,\gamtil_{0,1},\gamtil_{0,2})$ and the penalisation operator $P$ is trivial.
To this end, assume that $\Ftil'(u_{0,1},u_{0,2},\ell_0,\gamtil_{0,1},\gamtil_{0,2})\, (\du_1,\du_2,\dl,\dgam_1,\dgam_2)=0$ and $P(\du_1,\du_2,\dl,\dgam_1,\dgam_2)=0$, where the latter simply means $\dgam_1=\dgam_2=:\dgam$ and $\dl(0)=0$. 
From the first four lines in \eqref{eqn:Ftilprime} we conclude that $\du_j$ satisfies a homogeneous Cauchy problems and therefore has to vanish on $D(\ell_j)$ for $j\in\{1,2\}$. Thus, $B_{\ell,\gamtil_j}\du_j=0$ and by the same elimination procedure that led to \eqref{eqn:dl}, \eqref{eqn:dgam} (using also $\dl(0)=0$) we obtain that $\dl=0$ and therefore $\dgam=0$.

\subsubsection*{Convergence of Newton type schemes}
\noindent\textbf{(a) Regularised frozen Newton with penalty}\hfill\break
We apply a frozen Newton method with conventional Tikhonov (and no fractional) regularisation but with penalty $P$ as in \cite{nonlinearity_imaging_both,nonlinearity_imaging_2d}.
\begin{equation}\label{frozenNewtonHilbert}
\xi_{n+1}^\delta=\xi_{n}^\delta+(K^\star K+P^\star P+\regpar_n I)^{-1}
\Bigl(K^\star (\vec{h}^\delta-\Ftil(\xi_n^\delta))-P^\star P\xi_n^\delta+\regpar_n(\xi_0-\xi_n^\delta)\Bigr)
\end{equation}
where $K=\Ftil'(\xi_0)$ and $K^\star $ denotes the Hilbert space adjoint of $K:\mathbb{X}\to \mathbb{Y}$ and (cf. \eqref{eqn:spacesHilbert})
\begin{equation}\label{eqn:XY}
\mathbb{X}=H^{3+\epsilon}(D(\olell))^2\times H^1(0,L)\times L^2(0,L)^2,\qquad
\mathbb{Y}:=\bigl(L^2(D(\olell))\times \Gamma_1^2 \times \Gamma_2(\olell) \times L^2(0,L)\bigr)^2
\end{equation} 
The regularisation parameters are taken from a geometric sequence $\regpar_n=\regpar_0\theta^n$ for some $\theta\in(0,1)$, and the stopping index is chosen such that 
\begin{equation}\label{nstar}
\regpar_{n_*(\delta)}\to0\text{ and }\delta^2/\regpar_{n_*(\delta)-1}\to0\text{ as }\delta\to0, 
\end{equation}
where $\delta$ is the noise level according to
\begin{equation}\label{eqn:deltaCauchy3}
\|(f_1^\delta,f_2^\delta,g_1^\delta,g_2^\delta)-(f_1,f_2,g_2,g_2)\|_{L^2(0,T;L^2(\Omega))}\leq\delta.
\end{equation}

An application of \cite[Theorem 2.2]{rangeinvar} together with our verification of range invariance \eqref{rangeinvar_diff} with \eqref{rid} and linearised uniqueness yields the following convergence result.

\begin{theorem}\label{thm:convfrozenNewton}
Let $\xi_0\in U:=\mathcal{B}_\rho(\xi^\dagger)$ for some $\rho>0$ sufficiently small, assume that \eqref{u0} holds and let the stopping index $n_*=n_*(\delta)$ be chosen according to \eqref{nstar}.

Then the iterates $(\xi_n^\delta)_{n\in\{1,\ldots,n_*(\delta)\}}$ are well-defined by \eqref{frozenNewtonHilbert}, remain in $\mathcal{B}_\rho(\xi^\dagger)$ and converge in $X$ 
(defined as in \eqref{eqn:XY}),
$\|\xi_{n_*(\delta)}^\delta-\xi^\dagger\|_X\to0$ as $\delta\to0$. 
\hfill\break
In the noise free case $\delta=0$, $n_*(\delta)=\infty$ we have $\|\xi_n-\xi^\dagger\|_X\to0$ as $n\to\infty$.
\end{theorem}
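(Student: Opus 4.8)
The plan is to recognise that Theorem~\ref{thm:convfrozenNewton} is, in essence, a verification-and-citation: the iteration \eqref{frozenNewtonHilbert} is precisely the regularised frozen Newton scheme with penalty $P$ analysed abstractly in \cite[Theorem 2.2]{rangeinvar}, so it suffices to check that each structural hypothesis of that abstract result has been established in the preceding subsections for the concrete operator $\Ftil$ and the spaces $\mathbb{X}$, $\mathbb{Y}$ of \eqref{eqn:XY}. Accordingly, I would organise the proof as (i) fixing the functional-analytic frame, (ii) quoting the two verified conditions (differential range invariance and linearised uniqueness), and (iii) invoking the abstract theorem with the stopping rule \eqref{nstar}.

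First I would confirm that the chosen spaces make the scheme well-posed. Since $D(\olell)\subset\mathbb{R}^2$, the Sobolev embedding gives $H^{3+\epsilon}(D(\olell))\hookrightarrow W^{2,\infty}(D(\olell))$ and $H^1(0,L)\hookrightarrow L^\infty(0,L)$, which is exactly what the estimates for $I_j$, $II_j$, $III_j$ and the regularity hypotheses \eqref{u0} require; in particular the $L^\infty$ norms of the first and second $y$-derivatives of the iterate, and of the derivatives of the reference state $u_{0,j}$ up to third order, are controlled by the $\mathbb{X}$-norm and by \eqref{u0} respectively. With $\mathbb{X}$, $\mathbb{Y}$ the Hilbert spaces of \eqref{eqn:XY}, \eqref{eqn:spacesHilbert}, the linearisation $K=\Ftil'(\xi_0)$ is bounded, its Hilbert-space adjoint $K^\star$ and the penalty $P$, $P^\star$ are well-defined, and the regularised normal operator $K^\star K+P^\star P+\regpar_n I$ is boundedly invertible for every $\regpar_n>0$, so each $\xi_{n+1}^\delta$ in \eqref{frozenNewtonHilbert} is well-defined.

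Next I would feed the abstract theorem its two hypotheses. The differential range invariance \eqref{rangeinvar_diff} has been established by the explicit construction of $r=(r_{u\,1},r_{u\,2},r_\ell,r_{\gamtil\,1},r_{\gamtil\,2})$, and the quadratic remainder bound \eqref{rid} --- obtained from \eqref{u0} together with the first- and second-order Taylor estimates for $v(x,\ell(x))$ --- yields, on a sufficiently small ball $U=\mathcal{B}_\rho(\xi^\dagger)$, the tangential-cone-type condition \eqref{rid_cr} with a constant $c_r\in(0,1)$. The second hypothesis, linearised uniqueness, has been verified by showing that the joint nullspace of $K$ and $P$ is trivial: the homogeneous Cauchy data force each $\du_j$ to vanish, whence $B_{\ell,\gamtil_j}\du_j=0$, and the elimination procedure leading to \eqref{eqn:dl}, \eqref{eqn:dgam}, together with $\dl(0)=0$ and $\dgam_1=\dgam_2$ encoded in $P$, then forces $\dl=0$ and $\dgam=0$. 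Applying \cite[Theorem 2.2]{rangeinvar} with the geometric sequence $\regpar_n=\regpar_0\theta^n$ and the stopping index satisfying \eqref{nstar} delivers all three conclusions at once: well-definedness and invariance of $\mathcal{B}_\rho(\xi^\dagger)$, convergence $\|\xi_{n_*(\delta)}^\delta-\xi^\dagger\|_X\to0$ as $\delta\to0$, and, in the exact-data case, $\|\xi_n-\xi^\dagger\|_X\to0$ as $n\to\infty$.

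The main obstacle is not the final citation but the matching of constants and smallness conditions. Concretely, one must ensure that the radius $\rho$ is small enough that the quadratic bound \eqref{rid} produces a remainder constant $c_r$ strictly below the threshold required by \cite[Theorem 2.2]{rangeinvar}, and that the reference point $\xi_0$ lies within that same ball; this is where the freedom in choosing a smooth state $u_{0,j}$ bounded away from zero on $\ell_0$ (cf. the footnote to \eqref{u0}) is used. A secondary point to check is that only convergence --- not a convergence rate --- is claimed, so that no additional source condition on $\xi^\dagger-\xi_0$ beyond range invariance and the injectivity furnished by linearised uniqueness need be imposed; the a priori stopping rule \eqref{nstar} then compensates for the absence of a rate.
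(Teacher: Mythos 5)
Your proposal is correct and follows essentially the same route as the paper, which proves the theorem in a single sentence by invoking \cite[Theorem 2.2]{rangeinvar} together with the previously verified differential range invariance \eqref{rangeinvar_diff}, the quadratic remainder bound \eqref{rid} (yielding \eqref{rid_cr}), and linearised uniqueness. Your additional checks of the Sobolev embeddings behind the choice \eqref{eqn:XY} and of the well-posedness of each iteration step are consistent with, and merely make explicit, what the paper leaves implicit.
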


\medskip
\noindent\textbf{(b) Frozen Newton with penalty applied to fractionally regularised problem}\hfill\break
Replace $\Ftil_j$ in \eqref{eqn:Ftilj} by 
\begin{equation}\label{eqn:Ftilj_alpha}
\Ftil^\alpha_j:(u_{+j},u_{-j},\ell,\gamtil_j)\mapsto
\left(\begin{array}{c}
\partial_y^\alpha \overline{u}^{\olell}_{+j} -\sqrt{-\triangle_x} \overline{u}^{\olell}_{+j} \\
\partial_y u_{-j} -\sqrt{-\triangle_x} u_{-j} \\
u_{+j}\vert_{\Gamma_1}-\tfrac12(f+\sqrt{-\triangle_x}^{\,-1}g)\\
u_{-j}\vert_{\Gamma_1}-\tfrac12(f+\sqrt{-\triangle_x}^{\,-1}g)\\
(Bw_{+j},Bu_{-j})\vert_{\Gamma_2(\olell)}\\
B_{\ell,\gamtil}( u_{+j}+u_{-j})\vert_{\Gamma_0(\ell)}
\end{array}\right)\,,\ j\in\{1,2\}\,,
\end{equation}
where $\partial_y^\alpha w$ is the fractional DC derivative with endpoint $0$ and $\overline{u}^{\olell}(y)=u(\olell-y)$, cf. Section~\ref{sec:Cauchy1}.
Range invariance and linearised uniqueness can be derived analogously to the previous section and therefore we can apply 
\eqref{frozenNewtonHilbert} with $\Ftil^\alpha$ in place of $\Ftil$ and conclude its convergence to a solution
$(u_{+1}^{\alpha,\dagger},u_{-1}^{\alpha,\dagger},u_{+2}^{\alpha,\dagger},u_{-2}^{\alpha,\dagger},\ell^{\alpha,\dagger},\gamtil_1^{\alpha,\dagger}=\gamtil_2^{\alpha,\dagger})$ 
of \eqref{FtilP} with 
$\Ftil^\alpha_j(u_{+j},u_{-j},\ell,\gamtil_j)$ in place of 
$\Ftil_j(u_j,\ell,\gamtil_j)$
for any fixed $\alpha\in(0,1)$.
With the abbreviation
\[
\xi^\alpha := (u_{+,1}^\alpha,u_{-,1}^\alpha,u_{+,2}^\alpha,u_{-,2}^\alpha,\ell^\alpha,\gamtil_1^\alpha=\gamtil_2^\alpha)
\]
we thus have the following convergence result.

\begin{corollary}\label{cor:convfrozenNewton_alpha}
Let $\xi^\alpha_0\in U:=\mathcal{B}_\rho(\xi^{\alpha,\dagger})$ for some $\rho>0$ sufficiently small, assume that \eqref{u0} holds and let the stopping index $n_*=n_*(\delta)$ be chosen according to \eqref{nstar}.

Then the iterates $(\xi_n^{\alpha,\delta})_{n\in\{1,\ldots,n_*(\delta)\}}$ are well-defined by \eqref{frozenNewtonHilbert} with $\Ftil:=\Ftil^{\alpha(\delta)}$, remain in $\mathcal{B}_\rho(\xi^\dagger)$ and converge in $X$ 
(defined as in \eqref{eqn:XY} with $H^{3+\epsilon}(D(\olell))^2$ replaced by $H^{3+\epsilon}(D(\olell))^4$)
$\|\xi_{n_*(\delta)}^{\alpha,\delta}-\xi^{\alpha,\dagger}\|_X\to0$ as $\delta\to0$. In the noise free case $\delta=0$, $n_*(\delta)=\infty$ we have $\|\xi_n^\alpha-\xi^{\alpha,\dagger}\|_X\to0$ as $n\to\infty$.
\end{corollary}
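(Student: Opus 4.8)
The plan is to reduce the Corollary to the abstract convergence result \cite[Theorem 2.2]{rangeinvar}, exactly as in the proof of Theorem~\ref{thm:convfrozenNewton}, by re-verifying for the fractionally regularised operator $\Ftil^\alpha$ of \eqref{eqn:Ftilj_alpha} the two structural hypotheses established for $\Ftil$ in Section~\ref{sec:convCauchy3}: the differential range invariance condition \eqref{rangeinvar_diff} with the quadratic estimate \eqref{rid}, and linearised uniqueness. Throughout the Newton iteration the fractional order is held fixed, so that only the choice of the forward operator reflects the dependence $\alpha=\alpha(\delta)$; once the two ingredients are in place, the entire mechanism of part (a) transfers, now with convergence towards the $\alpha$-regularised solution $\xi^{\alpha,\dagger}$.

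First I would verify range invariance. The decisive structural point is that the nonlinear dependence on $(\ell,\gamtil)$ enters $\Ftil^\alpha$ only through the single boundary component $B_{\ell,\gamtil}(u_{+j}+u_{-j})|_{\Gamma_0(\ell)}$; the two evolution equations for $u_{+j}$ and $u_{-j}$, together with the $\Gamma_1$- and $\Gamma_2$-traces, are affine in the state variables and, beyond the fixed hold-all geometry, independent of $(\ell,\gamtil)$. Hence the range-invariance residual $r$ is again the shifted identity in the four state slots and in the $\ell$-slot, carrying nontrivial content only in the two $\gamtil$-slots. Writing $u_j:=u_{+j}+u_{-j}$, the quantities $I_j$, $II_j$, $III_j$ and their bounds are formally identical to those in Section~\ref{sec:convCauchy3}, now with $u_{0,j}=u_{0,+j}+u_{0,-j}$; the quadratic estimate \eqref{rid} then follows from the regularity hypothesis \eqref{u0}, imposed on these sums, together with the embedding $H^{3+\epsilon}(D(\olell))\hookrightarrow C^2(\overline{D(\olell)})$ on the two-dimensional domain, which supplies the $L^\infty$-traces of the second-order derivatives of the state along the curve $\ell$.

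Next I would establish linearised uniqueness. Setting $\Ftil^{\alpha\prime}(\xi_0^\alpha)(\du_{+1},\du_{-1},\du_{+2},\du_{-2},\dl,\dgam_1,\dgam_2)=0$ and the penalty condition $P(\cdots)=0$ — the latter meaning $\dgam_1=\dgam_2=:\dgam$ and $\dl(0)=0$ — the first rows of $\Ftil^{\alpha\prime}$ force $\du_{+j}$ to solve the homogeneous fractional equation (after the reflection $y\mapsto\olell-y$) with vanishing $\Gamma_1$- and $\Gamma_2$-data, and $\du_{-j}$ to solve the homogeneous first-order equation with vanishing data. In spectral coordinates these are scalar problems whose solutions are multiples of $E_{\alpha,1}(-\sqrt{\lambda_n}\,\cdot^\alpha)$, respectively $\exp(-\sqrt{\lambda_n}\,\cdot)$; with zero initial data every Fourier coefficient vanishes, so $\du_{+j}\equiv\du_{-j}\equiv0$ and hence $\du_j:=\du_{+j}+\du_{-j}\equiv0$ on $D(\ell_0)$. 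The surviving boundary row then reads $G(u_{0,j},\ell_0,\gamtil_0)(\dl,\dgam)=0$ for $j\in\{1,2\}$, and the elimination leading to \eqref{eqn:dl}, \eqref{eqn:dgam} (using $\dl(0)=0$) yields first $\dl=0$ and then $\dgam=0$, precisely as in the non-fractional case. Note that, in contrast to Section~\ref{sec:convCauchy3}, no appeal to unique continuation is needed here: the regularising fractional and first-order constraints render the state well-posed, so injectivity of the state map is built in.

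I expect the main obstacle to be bookkeeping rather than a genuinely new difficulty, concentrated in two checks. One must confirm that $\Ftil^{\alpha\prime}(\xi_0^\alpha)$ is bounded between the enlarged spaces $\mathbb{X}$ (now with $H^{3+\epsilon}(D(\olell))^4$) and $\mathbb{Y}$, i.e.\ that the Djrbashian--Caputo-derivative component of $\Ftil^{\alpha\prime}$ maps into $L^2(D(\olell))$ on this regularity scale; since $\alpha<1$ and $\du_{+j}\in H^{3+\epsilon}$ this is immediate away from $y=0$, the only delicate point being uniformity near the endpoint, where the Mittag-Leffler functions are singular (cf.\ the remark preceding Lemma~\ref{lem:rateEbounds}). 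With these mapping properties recorded and the two hypotheses verified, \cite[Theorem 2.2]{rangeinvar} delivers well-definedness of the iterates, their confinement to $\mathcal{B}_\rho(\xi^{\alpha,\dagger})$, the convergence $\|\xi_{n_*(\delta)}^{\alpha,\delta}-\xi^{\alpha,\dagger}\|_X\to0$ as $\delta\to0$, and the noise-free statement for $\delta=0$, $n_*=\infty$.
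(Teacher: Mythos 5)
Your proposal follows exactly the route the paper takes: the paper's own justification of Corollary~\ref{cor:convfrozenNewton_alpha} is the one-line remark that range invariance and linearised uniqueness ``can be derived analogously'' for $\Ftil^\alpha$ (the nonlinearity sitting only in the boundary component $B_{\ell,\gamtil}(u_{+j}+u_{-j})\vert_{\Gamma_0(\ell)}$), after which \cite[Theorem 2.2]{rangeinvar} is invoked verbatim as in Theorem~\ref{thm:convfrozenNewton}. Your write-up simply fills in these two verifications in more detail than the paper does, and does so correctly.
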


\medskip

It remains to estimate the approximation error $(\ell^{\alpha,\dagger}-\ell^\dagger,\gamtil^{\alpha,\dagger}-\gamtil^\dagger)$.
In Section~\ref{sec:Cauchy1} we have seen that $u_j^\dagger=u_{+,j}^{1,\dagger}+u_{-,j}^{1,\dagger}$ where $u_{-,j}^{\alpha,\dagger}-u_{-,j}^{1,\dagger}=0$ and 
\[
u_{+,j}^{\alpha,\dagger}(x,y)-u_{+,j}^{1,\dagger}(x,y)
=\tfrac12 \sum_{i=1}^\infty (f_{ij}+\tfrac{1}{\sqrt{\lambda_i}}g_{ij})\Bigl(\tfrac{1}{E_{\alpha,1}(-\sqrt{\lambda_i}y^\alpha)}-\exp(\sqrt{\lambda_i}y)\Bigr) \phi_i(x).
\]
Moreover, subtracting the two identities
$B_{\ell^{\alpha,\dagger},\gamtil^{\alpha,\dagger}}u^{\alpha,\dagger}_{+,j}=0$, 
$B_{\ell^{\dagger},\gamtil^{\dagger}}u^{\dagger}_{+,j}=0$, 
we arrive at the following differential algebraic system for $\dl=\ell^{\alpha,\dagger}-\ell^{\dagger}$, $\dgam=\gamtil^{\alpha,\dagger}-\gamtil^{\dagger}$ 
\[
\begin{aligned}
-u^\dagger_{+,1,x}(\ell^\dagger)\,\dl'+d^\alpha_1\,\dl+u^\dagger_{+,1}(\ell^\dagger)\,\dgam=&b^\alpha_1\\
-u^\dagger_{+,2,x}(\ell^\dagger)\,\dl'+d^\alpha_2\,\dl+u^\dagger_{+,2}(\ell^\dagger)\,\dgam=&b^\alpha_2,
\end{aligned}
\]
where 
\[
\begin{aligned}
d^\alpha_j(x)=& \int_0^1 
I(u^{\alpha,\dagger}_{+,j,y},\ell^{\alpha,\dagger},\gamtil^{\alpha,\dagger};x,\ell^\dagger(x)+\theta(\ell^{\alpha,\dagger}(x)-\ell^\dagger(x)))\,d\theta\\
b^\alpha_j(x)=&
-I(u^{\alpha,\dagger}_{+,j}-u^\dagger_{+,j},\ell^{\alpha,\dagger},\gamtil^{\alpha,\dagger};x,\ell^{\alpha,\dagger}(x))\\
I(u,\ell,\gamtil;x,y)=& u_y(x,y)-\ell'(x)u_x(x,y)+\gamtil u(x,y) .
\end{aligned}
\]
Assuming that the Wronskian 
\begin{equation}\label{eqn:W}
W:= u^\dagger_{+,1,x}(\ell^\dagger)\,u^\dagger_{+,2}(\ell^\dagger)-u^\dagger_{+,2,x}(\ell^\dagger)\,u^\dagger_{+,1}(\ell^\dagger) 
\end{equation}
and one of the factors $u^\dagger_{+,j}(\ell^\dagger)$ of $\dgam$ are bounded away from zero, we can conclude existence of a constant $C>0$ independent of $\alpha$ (note that $W$ and $u^\dagger_{+,j}$ do not depend on $\alpha$)
such that
\[
\|\ell^{\alpha,\dagger}-\ell^{\dagger}\|_{C^1(0,L)} 
+ \|\gamtil^{\alpha,\dagger}-\gamtil^{\dagger}\|_{C(0,L)}
\leq C \sum_{j=1}^2\|b^\alpha_j\|_{C(0,L)}
\leq \tilde{C}\sum_{j=1}^2\|u^{\alpha,\dagger}_{+,j}-u^\dagger_{+,j}\|_{C((0,L)\times(\ulell,\olell))}
\]
The latter can be estimated by means of Lemma 1.4 
in Section~\ref{sec:Cauchy1}. 

Combining this with Corollary~\ref{cor:convfrozenNewton_alpha} we have the following convergence result.
\begin{theorem}\label{thm:convfrozenNewtonplus regbyalpha}
Assume that $W$ according to \eqref{eqn:W} and either $u^\dagger_{+,1}(\ell^\dagger)$ or $u^\dagger_{+,2}(\ell^\dagger)$ be bounded away from zero and let $\alpha(\delta)\to0$ as $\delta\to0$.
Moreover, let the assumptions of Corollary~\ref{cor:convfrozenNewton_alpha} hold for $\alpha=\alpha(\delta)$, and let  $(\xi_n^{\alpha(\delta),\delta})_{n\in\{1,\ldots,n_*(\delta)\}}$ be defined by \eqref{frozenNewtonHilbert}, \eqref{nstar} with $\Ftil:=\Ftil^{\alpha(\delta)}$. 
Then
\[
\|\ell_{n_*(\delta)}^{\alpha(\delta),\delta}-\ell^\dagger\|_{H^1(0,L)} + 
\|\gamtil_{n_*(\delta)}^{\alpha(\delta),\delta}-\gamtil^\dagger\|_{L^2(0,L)}
\to0
\text{ as }\delta\to0.
\]
\end{theorem}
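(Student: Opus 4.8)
By the triangle inequality, inserting the exact solution $(\ell^{\alpha(\delta),\dagger},\gamtil^{\alpha(\delta),\dagger})$ of the fractionally regularised system \eqref{FtilP} (with $\Ftil^{\alpha(\delta)}$ in place of $\Ftil$),
\[
\|\ell_{n_*(\delta)}^{\alpha(\delta),\delta}-\ell^\dagger\|_{H^1(0,L)}
+\|\gamtil_{n_*(\delta)}^{\alpha(\delta),\delta}-\gamtil^\dagger\|_{L^2(0,L)}
\le \text{(I)}+\text{(II)},
\]
where (I) collects the iteration/propagated-noise terms $\|\ell_{n_*(\delta)}^{\alpha(\delta),\delta}-\ell^{\alpha(\delta),\dagger}\|_{H^1}+\|\gamtil_{n_*(\delta)}^{\alpha(\delta),\delta}-\gamtil^{\alpha(\delta),\dagger}\|_{L^2}$ and (II) the approximation terms $\|\ell^{\alpha(\delta),\dagger}-\ell^\dagger\|_{H^1}+\|\gamtil^{\alpha(\delta),\dagger}-\gamtil^\dagger\|_{L^2}$. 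The plan is to kill (I) by Corollary~\ref{cor:convfrozenNewton_alpha} and (II) by the differential-algebraic system derived just before \eqref{eqn:W}, the two being coupled through the choice $\alpha=\alpha(\delta)$.

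For (I), Corollary~\ref{cor:convfrozenNewton_alpha} gives, for each \emph{fixed} $\alpha$, the convergence $\|\xi_{n_*(\delta)}^{\alpha,\delta}-\xi^{\alpha,\dagger}\|_X\to0$ as $\delta\to0$; projecting the $X$-norm onto its $H^1(0,L)\times L^2(0,L)$ components bounds (I). To let $\alpha=\alpha(\delta)$ drift, I would verify that the hypotheses of \cite[Theorem~2.2]{rangeinvar} underlying that corollary hold \emph{uniformly} in $\alpha$: the range-invariance remainder estimate \eqref{rid}--\eqref{rid_cr} and the admissible radius $\rho$ depend only on the $\alpha$-independent smoothness and nondegeneracy bounds \eqref{u0} on the reference state $\xi_0$, and the stopping rule \eqref{nstar} is itself $\alpha$-free; since (II) forces $\xi^{\alpha(\delta),\dagger}\to\xi^\dagger$, a fixed $\xi_0\in\mathcal B_{\rho/2}(\xi^\dagger)$ lies in $\mathcal B_\rho(\xi^{\alpha(\delta),\dagger})$ for $\delta$ small. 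Applying the corollary for each $\delta$ at $\alpha=\alpha(\delta)$ then yields (I)$\to0$.

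For (II), I would subtract the interface identities $B_{\ell^{\alpha,\dagger},\gamtil^{\alpha,\dagger}}u^{\alpha,\dagger}_{+,j}=0$ and $B_{\ell^\dagger,\gamtil^\dagger}u^\dagger_{+,j}=0$ ($j=1,2$) and Taylor-expand in $y$ about $\ell^\dagger$ to obtain the $2\times2$ first-order linear system displayed before \eqref{eqn:W}, with unknowns $(\dl',\dl,\dgam)$, coefficients $u^\dagger_{+,j,x}(\ell^\dagger)$, $d^\alpha_j$, $u^\dagger_{+,j}(\ell^\dagger)$ and forcing $b^\alpha_j$. Eliminating $\dgam$ by forming the $u^\dagger_{+,2}(\ell^\dagger)$- and $u^\dagger_{+,1}(\ell^\dagger)$-weighted difference turns the $\dl'$-coefficient into exactly the Wronskian $-W$ of \eqref{eqn:W}; since by hypothesis $W$ and one factor $u^\dagger_{+,j}(\ell^\dagger)$ are bounded away from zero (both $\alpha$-independent) and $\dl(0)=\ell^{\alpha,\dagger}(0)-\ell^\dagger(0)=0$ by the penalty $P$, the integrating-factor formula as in \eqref{eqn:dl} and back-substitution give $\|\dl\|_{C^1(0,L)}+\|\dgam\|_{C(0,L)}\le C\sum_j\|b^\alpha_j\|_{C(0,L)}$ with $C$ independent of $\alpha$. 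As $b^\alpha_j=-I(u^{\alpha,\dagger}_{+,j}-u^\dagger_{+,j},\ell^{\alpha,\dagger},\gamtil^{\alpha,\dagger};\cdot,\ell^{\alpha,\dagger})$ involves only the difference $u^{\alpha,\dagger}_{+,j}-u^\dagger_{+,j}$ and its first derivatives evaluated on $\ell^{\alpha,\dagger}\in(\ulell,\olell)$, interior elliptic regularity reduces $\sum_j\|b^\alpha_j\|_C$ to $\tilde C\sum_j\|u^{\alpha,\dagger}_{+,j}-u^\dagger_{+,j}\|_{C((0,L)\times(\ulell,\olell))}$. Finally, the spectral expression for this difference has eigencoefficient $\tfrac12(f_{ij}+\tfrac{1}{\sqrt{\lambda_i}}g_{ij})\bigl(\tfrac{1}{E_{\alpha,1}(-\sqrt{\lambda_i}y^\alpha)}-\exp(\sqrt{\lambda_i}y)\bigr)$; rewriting the bracket as $-\exp(\sqrt{\lambda_i}y)\,d_\alpha(y)/E_{\alpha,1}(-\sqrt{\lambda_i}y^\alpha)$ and combining the stability bound of Lemma~\ref{lem:mlf-stability_est} for $1/E_{\alpha,1}$ with the $L^\infty(\ulell,\olell)$ estimates $\|d_\alpha\|,\|\partial_y d_\alpha\|\le \tilde C(1-\alpha)$ of Lemma~\ref{lem:rateEbounds} (both uniform in $\lambda_i$, with $y$ kept away from zero), the termwise factor tends to zero as the Mittag--Leffler order approaches its integer value $\alpha\to1$, i.e.\ as $E_{\alpha,1}(-\sqrt{\lambda_i}\,\cdot^\alpha)$ reduces to the exponential; a source condition on $u^\dagger_{+,j}$ (summable decay of $|f_{ij}+\tfrac{1}{\sqrt{\lambda_i}}g_{ij}|\exp(\sqrt{\lambda_i}\olell)\sqrt{\lambda_i}\|\phi_i\|_{C^1}$) supplies a dominating series, so Lebesgue dominated convergence gives (II)$\to0$ precisely under $1-\alpha(\delta)\to0$, in complete analogy with the Problem~\ref{Cauchy1} theorem.

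The hard part will be the interplay of the two error sources. The approximation bound (II) decays only as the regularisation is switched off (the Mittag--Leffler order driven to the integer value at which $E_{\alpha,1}$ degenerates to $\exp$, so that the operative condition on the coupling is $1-\alpha(\delta)\to0$), whereas (I) requires the frozen-Newton hypotheses — above all the radius $\rho$ and the range-invariance constant $c_r$ — to remain valid uniformly as $\alpha(\delta)$ approaches that same degenerate value and the moving target $\xi^{\alpha(\delta),\dagger}$ drifts towards $\xi^\dagger$. Keeping both $C$ in the differential-algebraic estimate and $\rho,c_r$ in the Newton estimate independent of $\alpha$ is the delicate point; this is exactly why the nondegeneracy is imposed on the $\alpha$-independent quantities $W$ and $u^\dagger_{+,j}(\ell^\dagger)$ and the smoothness on $\xi_0$ via \eqref{u0}. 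Granting this uniformity, letting $\alpha(\delta)$ approach its limit slowly enough relative to the noise (the counterpart here of the Problem~\ref{Cauchy1} condition $\Gamma(1-\alpha)\,\delta\to0$, with the stopping index governed by \eqref{nstar}) makes both (I) and (II) vanish as $\delta\to0$, which completes the argument.
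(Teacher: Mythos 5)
Your proposal is correct and follows essentially the same route as the paper, whose proof is exactly the discussion preceding the theorem: the same splitting into the frozen-Newton iteration error, handled by Corollary~\ref{cor:convfrozenNewton_alpha} applied with $\alpha=\alpha(\delta)$, and the approximation error $(\ell^{\alpha,\dagger}-\ell^\dagger,\gamtil^{\alpha,\dagger}-\gamtil^\dagger)$, bounded through the differential-algebraic system obtained by subtracting the interface identities, the nondegeneracy of the $\alpha$-independent Wronskian \eqref{eqn:W} and of $u^\dagger_{+,j}(\ell^\dagger)$, and then Lemmas~\ref{lem:mlf-stability_est} and \ref{lem:rateEbounds} applied to the spectral representation of $u^{\alpha,\dagger}_{+,j}-u^\dagger_{+,j}$. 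Your additions are refinements of points the paper leaves implicit --- the explicit summability (source) condition enabling dominated convergence, the uniformity in $\alpha$ of the constants in \eqref{rid}--\eqref{rid_cr} needed to let $\alpha=\alpha(\delta)$ drift inside the corollary, and the reading of the hypothesis as $1-\alpha(\delta)\to0$ (i.e.\ $\alpha(\delta)\to1$, correcting an apparent typo) --- with only one cosmetic slip: the derivative bound in Lemma~\ref{lem:rateEbounds} is $\hat{C}\llam(1-\alpha)$ rather than uniform in $\lambda_i$, which your dominating series already compensates via its factor $\sqrt{\lambda_i}$.
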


\appendix

\section{Appendix: Cauchy 1}\label{sec:appendixCauchy1}

\begin{lemma}\label{lem:estEalpha}
For any $\llam_1\geq0$, $\alpha\in(0,1)$, $p\in(1,\frac{1}{1-\alpha})$, $\hat{p}:= 1+\frac{1}{p}$ and the constant 
\begin{equation}\label{eqn:CalphapTT}
\begin{aligned}
C(\alpha,p,\TT):= \| E_{\alpha,\alpha/2}\|_{L^\infty(\mathbb{R}^+)} 
(\tilde{C}_0(\alpha,\alpha/2)+\tilde{C}_1(\alpha,\alpha/2))
{\textstyle \left(\frac{\hat{p}}{\alpha-\hat{p}}\right)^{\hat{p}}
\max\{1,\sqrt{\TT}\}^{\alpha-\hat{p}}
},
\end{aligned}
\end{equation}
with $\tilde{C}_0$, $\tilde{C}_1$ as in \eqref{eqn:C0C1},
the following estimate holds
\[
\sup_{\llam>\llam_1}\|E_{\alpha,1}(-\llam \, \cdot^\alpha)-\,\cdot^{\alpha-1}E_{\alpha,\alpha}(-\llam \, \cdot^\alpha)\|_{L^p(0,\TT)}\leq C(\alpha,p,\TT)\, (1-\alpha).
\]
\end{lemma}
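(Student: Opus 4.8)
The plan is to rewrite the difference
\[
\Delta_\alpha(\ttt):=E_{\alpha,1}(-\llam \ttt^\alpha)-\ttt^{\alpha-1}E_{\alpha,\alpha}(-\llam \ttt^\alpha)
\]
as the difference of two Abel fractional integrals of one and the same $\llam$-dependent kernel, with orders of integration that straddle $1/2$ symmetrically and differ by exactly $1-\alpha$. Using the Laplace transform identity $\mathcal{L}[\eta^{\beta-1}E_{\alpha,\beta}(-\llam\eta^\alpha)](s)=\tfrac{s^{\alpha-\beta}}{s^\alpha+\llam}$ (cf. \cite[Lemma 4.12]{BBB}) for $\beta\in\{1,\alpha,\alpha/2\}$, I would set $w(\eta):=\eta^{\alpha/2-1}E_{\alpha,\alpha/2}(-\llam\eta^\alpha)$, whose transform is $\tfrac{s^{\alpha/2}}{s^\alpha+\llam}$, and observe that $\mathcal{L}[I^{1-\alpha/2}w]=\tfrac{s^{\alpha-1}}{s^\alpha+\llam}=\mathcal{L}[E_{\alpha,1}(-\llam\cdot^\alpha)]$ and $\mathcal{L}[I^{\alpha/2}w]=\tfrac{1}{s^\alpha+\llam}=\mathcal{L}[\cdot^{\alpha-1}E_{\alpha,\alpha}(-\llam\cdot^\alpha)]$, where $I^\mu\varphi(\ttt)=\int_0^\ttt h_\mu(\ttt-\eta)\varphi(\eta)\,d\eta$ with $h_\mu(\tau)=\tau^{\mu-1}/\Gamma(\mu)$. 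Hence
\[
\Delta_\alpha=\bigl(I^{1-\alpha/2}-I^{\alpha/2}\bigr)w .
\]
This identity is what forces the second index $\alpha/2$ and the factor $\|E_{\alpha,\alpha/2}\|_{L^\infty(\mathbb{R}^+)}$ into the constant, and it localises the entire $\llam$-dependence into the single bounded factor $E_{\alpha,\alpha/2}(-\llam\eta^\alpha)$.

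To extract the factor $(1-\alpha)$ I would use the fundamental theorem of calculus in the order of integration: since the orders $\tfrac{\alpha}{2}$ and $1-\tfrac{\alpha}{2}$ differ by $1-\alpha$,
\[
\Delta_\alpha(\ttt)=\int_{\alpha/2}^{\,1-\alpha/2}\partial_\mu\bigl(I^\mu w\bigr)(\ttt)\,d\mu,\qquad
\partial_\mu\bigl(I^\mu w\bigr)(\ttt)=\int_0^\ttt h_\mu(\ttt-\eta)\bigl(\ln(\ttt-\eta)-\psi(\mu)\bigr)w(\eta)\,d\eta,
\]
with $\psi=\Gamma'/\Gamma$. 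Taking $L^p(0,\TT)$ norms and applying Minkowski's integral inequality then gives
\[
\|\Delta_\alpha\|_{L^p(0,\TT)}\le (1-\alpha)\sup_{\mu\in[\alpha/2,\,1-\alpha/2]}\bigl\|\partial_\mu(I^\mu w)\bigr\|_{L^p(0,\TT)} ,
\]
so it remains to bound the $\mu$-derivative uniformly for $\mu$ in an interval that, because $\alpha\ge\alpha_0$, stays bounded away from $0$ and $1$, and uniformly in $\llam>\llam_1$.

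For that estimate I would first bound the kernel $\llam$-uniformly by $|w(\eta)|\le \|E_{\alpha,\alpha/2}\|_{L^\infty(\mathbb{R}^+)}\,\eta^{\alpha/2-1}$, reducing everything to a weighted, logarithmically perturbed Beta convolution $\int_0^\ttt (\ttt-\eta)^{\mu-1}\bigl(|\ln(\ttt-\eta)|+|\psi(\mu)|\bigr)\eta^{\alpha/2-1}\,d\eta$. The two summands — the $\psi(\mu)$-part and the $\ln$-part — are exactly where the constants $\tilde{C}_0(\alpha,\alpha/2),\tilde{C}_1(\alpha,\alpha/2)$ from \eqref{eqn:C0C1} enter; the logarithm is absorbed through $|\ln\tau|\le C_\epsilon\,\tau^{-\epsilon}$ for $\tau\le 1$ and $|\ln\tau|\le C_\epsilon\,\tau^{\epsilon}$ for $\tau\ge1$, after which the convolution is an explicit Beta integral scaling like a power of $\ttt$. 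Integrating this power over $(0,\TT)$ in $L^p$ and optimising the free exponent produces a factor of the form $\bigl(\tfrac{\hat p}{\hat p-\alpha}\bigr)^{\hat p}\max\{1,\sqrt{\TT}\}^{\,\alpha-\hat p}$ with $\hat p=1+1/p$, as in \eqref{eqn:CalphapTT}; the borderline contribution is the endpoint $\mu=\alpha/2$, whose convolution scales like $\ttt^{\alpha-1}$ and is $L^p$-integrable near $0$ precisely under the hypothesis $p<\tfrac{1}{1-\alpha}$. Finally, passing to $\sup_{\llam>\llam_1}$ is harmless, since every bound above is already $\llam$-independent.

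I expect the main obstacle to be the logarithmic factor $\ln(\ttt-\eta)$ coming from $\partial_\mu h_\mu$: it must be controlled simultaneously near the diagonal $\eta=\ttt$ and near the weight singularity $\eta=0$, uniformly over the whole range $\mu\in[\alpha/2,1-\alpha/2]$, while keeping the dependence on the final ``time'' $\TT$ completely explicit — this $\TT$-tracking being the very reason the estimate is re-proved here. By contrast, the uniformity in $\llam$, usually the delicate point for Mittag--Leffler estimates with argument close to the stability boundary, is rendered trivial by the single $L^\infty$ bound on $E_{\alpha,\alpha/2}$, which is the main payoff of the symmetric representation $\Delta_\alpha=(I^{1-\alpha/2}-I^{\alpha/2})w$.
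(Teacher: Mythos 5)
Your proposal is correct and follows essentially the same route as the paper: the identity $\Delta_\alpha=(I^{1-\alpha/2}-I^{\alpha/2})w$ is exactly the paper's Laplace-transform factorisation $e_{\alpha,1}-e_{\alpha,\alpha}=e_{\alpha,\alpha/2}*(\mathfrak{g}_{1-\alpha/2}-\mathfrak{g}_{\alpha/2})$, the factor $(1-\alpha)$ is extracted by differentiating in the order parameter (your FTC-plus-Minkowski step is the paper's Mean Value Theorem applied to $\alpha\mapsto\mathfrak{g}_{\alpha-\beta}$, producing the same log and digamma terms behind $\tilde{C}_0,\tilde{C}_1$), and the $\llam$-uniformity comes from the same $\|E_{\alpha,\alpha/2}\|_{L^\infty(\mathbb{R}^+)}$ bound. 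The only (immaterial) deviation is that you estimate the convolution pointwise via a Beta integral before taking the $L^p$ norm, whereas the paper applies Young's convolution inequality with the balanced exponents $\tfrac1q=\tfrac1r=\tfrac12(1+\tfrac1p)$.
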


\begin{proof}
Abbreviating 
\[
\mathfrak{g}_\mu(\ttt)=\tfrac{1}{\Gamma(\mu)} \ttt^{\mu-1}, \quad
e_{\alpha,\beta}(\ttt)=e_{\alpha,\beta}(\ttt;\llam)=\ttt^{\beta-1}E_{\alpha,\beta}(-\llam \ttt^\alpha), \quad 
\] 
the quantity to be estimated is 
$w(\ttt):=E_{\alpha,1}(-\llam  \ttt^\alpha)-\ttt^{\alpha-1}E_{\alpha,\alpha}(-\llam \ttt^\alpha)=e_{\alpha,1}(\ttt)-e_{\alpha,\alpha}(\ttt)$.
Using the Laplace transform identities
\begin{equation}\label{eqn:Laptransf}
(\mathcal{L}\mathfrak{g}_\mu)(\xi)=\xi^{-\mu}, \quad
(\mathcal{L}e_{\alpha,\beta})(\xi)=\frac{\xi^{\alpha-\beta}}{\llam+\xi^\alpha},
\end{equation}
we obtain, for some $\beta\in(\alpha-1,\alpha)$ yet to be chosen, 
\[
(\mathcal{L}w)(\xi)= \frac{\xi^{\alpha-1}-1}{\llam+\xi^\alpha}=\frac{\xi^{\alpha-\beta}}{\llam+\xi^\alpha}(\xi^{\beta-1}-\xi^{\beta-\alpha})
= (\mathcal{L}e_{\alpha,\beta})(\xi)\, \Bigl((\mathcal{L}\mathfrak{g}_{1-\beta})(\xi)-(\mathcal{L}\mathfrak{g}_{\alpha-\beta})(\xi)\Bigr),
\] 
hence, by Young's Convolution Inequality,
\[
\|w\|_{L^p(0,\TT)}=
\|e_{\alpha,\beta}*(\mathfrak{g}_{1-\beta}-\mathfrak{g}_{\alpha-\beta})\|_{L^p(0,\TT)}\\
\leq
\|e_{\alpha,\beta}\|_{L^q(0,\TT)}
\|\mathfrak{g}_{1-\beta}-\mathfrak{g}_{\alpha-\beta}\|_{L^r(0,\TT)}
\]
provided
$\frac{1}{q}+\frac{1}{r}= 1+\frac{1}{p}$.

For the first factor, 
under the condition 
\begin{equation}\label{eqn:condq}
1\leq q<\frac{1}{1-\beta}, 
\end{equation}
that is necessary for integrability near zero, we get
\begin{equation}\label{este}
\|e_{\alpha,\beta}\|_{L^q(0,\TT)} 
\leq 
\|E_{\alpha,\beta}\|_{L^\infty(\mathbb{R}^+)}\frac{\max\{1,\TT\}^{(\beta-1)+1/q}}{((\beta-1)q+1)^{1/q}}.
\end{equation}

The second factor can be estimated by applying the Mean Value Theorem to the function $\theta(\ttt;\alpha,\beta):=\mathfrak{g}_{\alpha-\beta}(\ttt)$ as follows
\[
\begin{aligned}
&\mathfrak{g}_{1-\beta}(\ttt)-\mathfrak{g}_{\alpha-\beta}(\ttt)=\theta(\ttt,1)-\theta(\ttt;\alpha)
=\frac{d}{d\alpha} \theta(\ttt;\tilde{\alpha},\beta)\, (1-\alpha)
=\tilde{\theta}(\ttt;\tilde{\alpha},\beta)\, (1-\alpha)\\
\end{aligned}
\]
where
\[
\begin{aligned}
\tilde{\theta}(\ttt;\tilde{\alpha},\beta)
=& \ttt^{\tilde{\alpha}-\beta-1} \Bigl(\tfrac{1}{\Gamma}(\tilde{\alpha}-\beta)\log(\ttt)-\tfrac{\Gamma'}{\Gamma^2}(\tilde{\alpha}-\beta) \Bigr)\, \\
=&\mathfrak{g}_{\tilde{\alpha}-\beta}(\ttt)\Bigl(\log(\ttt)-(\log\Gamma)'(\tilde{\alpha}-\beta)\Bigr)\,
\end{aligned}
\]
for some $\tilde{\alpha}\in(\alpha,1)$, with the digamma function $\psi=\frac{\Gamma'}{\Gamma}=(\log\circ\Gamma)'$, for which $\frac{\psi}{\Gamma}=\tfrac{\Gamma'}{\Gamma^2}$ is known to be an entire function as is also the reciprocal Gamma function, thus 
\begin{equation}\label{eqn:C0C1}
\tilde{C}_0(\alpha,\beta):=\sup_{\tilde{\alpha}\in(\alpha,1)} |\tfrac{\psi}{\Gamma}(\tilde{\alpha}-\beta)|<\infty, \qquad 
\tilde{C}_1(\alpha,\beta):=\sup_{\tilde{\alpha}\in(\alpha,1)} |\tfrac{1}{\Gamma}(\tilde{\alpha}-\beta)|<\infty.
\end{equation}
Integrability near $\ttt=0$ of $(\ttt^{\tilde{\alpha}-\beta-1})^r$ and of $(\ttt^{\tilde{\alpha}-\beta-1} \log(\ttt))^r$ holds iff 
\begin{equation}\label{eqn:condr}
1\leq r<\frac{1}{1-\tilde{\alpha}+\beta}
\end{equation}
and 
yields 
\begin{equation}\label{estdiffg}
\|\mathfrak{g}_{1-\beta}-\mathfrak{g}_{\alpha-\beta}\|_{L^r(0,\TT)}\leq 
\sup_{\tilde{\alpha}\in(\alpha,1)} \|\tilde{\theta}(\cdot,\tilde{\alpha},\beta)\|_{L^r(0,\ell)}\, (1-\alpha) 
\end{equation}
where 
\begin{equation}\label{estthetatil}
\|\tilde{\theta}(\cdot,\tilde{\alpha},\beta)\|_{L^r(0,\ell)}\leq (\tilde{C}_0(\alpha,\beta) +\tilde{C}_1(\alpha,\beta))
\frac{\max\{1,\TT\}^{(\alpha-\beta-1)+1/r}}{((\alpha-\beta-1)r+1)^{1/r}}.
\end{equation}

Conditions \eqref{eqn:condq}, \eqref{eqn:condr} together with $\tilde{\alpha}\in(\alpha,1)$ are equivalent to  
\[
\frac{1}{p}=\frac{1}{q}+\frac{1}{r}-1
>
1-\tilde{\alpha} 
\text{ and }
1<\beta+\frac{1}{q}<\tilde{\alpha}+\frac{1}{p}
\]
which together with $\tilde{\alpha}\in(\alpha,1)$ leads to the assumption 
\[
\frac{1}{p}>1-\alpha 
\]
and the choice $\min\{0,1-\frac{1}{q}\}<\beta<\alpha+\frac{1}{p}-\frac{1}{q}$.

To minimize the factors\footnote{we do not go for asymptotics with respect to $\TT$ since we assume $\TT$ to be moderately sized anyway} 
\[
c_1(q,\beta)= ((\beta-1)q+1)^{-1/q}, \quad 
c_2(r,\alpha-\beta)= ((\alpha-\beta-1)r+1)^{-1/r}
\]
in \eqref{este}, \eqref{estthetatil}
under the constraints \eqref{eqn:condq}, \eqref{eqn:condr}
and $\frac{1}{q}+\frac{1}{r}= 1+\frac{1}{p}$ we make the choice $\frac{1}{q}=\frac{1}{r}=\frac12(1+\frac{1}{p})$, $\beta=\alpha/2$ that balances the competing pairs $q\leftrightarrow r$, $\beta\leftrightarrow\alpha-\beta$ and arrive at 
\[
c_1(q,\beta)= c_2(r,\alpha-\beta)= \left(\frac{1+\frac{1}{p}}{\alpha-1+\tfrac{1}{p}}\right)^{1+\frac{1}{p}}
\]
\end{proof}

\begin{proof} (Lemma~\ref{lem:rateE})
To prove \eqref{eqn:rateE}, we employ an energy estimate for the {\sc ode} satisfied by $v(\ttt):=E_{\alpha,1}(-\llam \ttt^\alpha)-\exp(-\llam \ttt)=u_{\alpha,\llam}(\ttt)-u_{1,\llam}(\ttt)$,
\[
\begin{aligned}
&\partial_\ttt v+\llam v=-(\partial_\ttt^\alpha-\partial_\ttt)u_{\alpha,\llam}=:\llam w\\
&\text{where }
w=-\tfrac{1}{\llam}(\partial_\ttt^\alpha-\partial_\ttt) E_{\alpha,1}(-\llam \ttt^\alpha)
=E_{\alpha,1}(-\llam \ttt^\alpha)-\ttt^{\alpha-1}E_{\alpha,\alpha}(-\llam \ttt^\alpha)
\end{aligned}
\]
Testing with $|v(\tau)|^{p-1}\mbox{sign}(v(\tau))$, integrating from $0$ to $t$, and applying H\"older's and Young's inequalities yields, after multiplication with $p$,
\begin{equation}\label{vestp}
|v(\ttt)|^p +\llam\int_0^\ttt |v(\tau)|^p\, d\tau \leq \llam\int_0^\ttt |w(\tau)|^p\, d\tau\,,
\end{equation}
in particular
\[
\|v\|_{L^\infty(0,\TT)} \leq \llam^{1/p} \|w\|_{L^p(0,\TT)},\quad
\|v\|_{L^p(0,\TT)} \leq  \|w\|_{L^p(0,\TT)}
\]
for any $\TT\in(0,\infty]$.
The result then follows from Lemma~\ref{lem:estEalpha}.
\end{proof}

\begin{proof} (Lemma~\ref{lem:rateEbounds})
For the second estimate, with $v=e_{\alpha,1}-e_{1,1}$ as in the proof of Lemma~\ref{lem:rateE}, we have to bound $\partial_\ttt v=-\llam(e_{\alpha,\alpha}-e_{1,1})$, where 
\[
\mathcal{L}(e_{\alpha,\alpha}-e_{1,1})(\xi)=\frac{1}{\llam+\xi^\alpha}-\frac{1}{\llam+\xi}
= \frac{\xi-\xi^\alpha}{(\llam+\xi^\alpha)(\llam+\xi)}
= \frac{\xi^{1-\gamma}}{\llam+\xi}\, \frac{\xi^{\alpha-\beta}}{\llam+\xi^\alpha}\,
( \xi^{\beta+\gamma-\alpha} - \xi^{\beta+\gamma-1})
\]
for $\beta,\gamma>0$ with $\beta+\gamma<\alpha$ yet to be chosen.
In view of \eqref{eqn:Laptransf} we thus have
\[
e_{\alpha,\alpha}-e_{1,1} = e_{1,\gamma}*e_{\alpha,\beta}*(\mathfrak{g}_{\alpha-\beta-\gamma}-\mathfrak{g}_{1-\beta-\gamma}) =:e_{1,\gamma}*e_{\alpha,\beta}*\underline{d\mathfrak{g}}
\]
Now, applying the elementary estimate
\[
\begin{aligned}
\|a*b\|_{L^\infty(\ell_2,\ell_3)}
=&\sup_{y\in(\ell_2,\ell_3)}\left|\int_0^y a(y-z) b(z)\, dz\right|\\
=&\sup_{y\in(\ell_2,\ell_3)}\left|\int_0^{y-\ell_1} a(y-z) b(z)\, dz
+\int_{y-\ell_1}^y a(y-z) b(z)\, dz\right|\\
\leq& 
\|a\|_{L^\infty(\ell_1,\ell_3)} \|b\|_{L^1(0,\ell_3-\ell_1)}
+\|b\|_{L^\infty(\ell_2-\ell_1,\ell_3)} \|a\|_{L^1(0,\ell_1)}
\end{aligned}
\]
for $0<\ell_1<\ell_2<\ell_3$, $a,b\in L^1(0,\ell_3)$, 
$a\vert_{(\ell_1,\ell_3)}\in L^\infty(\ell_1,\ell_3)$, 
$b\vert_{(\ell_2-\ell_1,\ell_3)}\in L^\infty(\ell_2-\ell_1,\ell_3)$, twice, namely with 
$a=e_{1,\gamma}$, $b=e_{\alpha,\beta}$, $\ell_1=\ulell/3$, $\ell_2=\ulell$, $\ell_3=\olell$ 
and with 
$a=e_{\alpha,\beta}$, $b=\underline{d\mathfrak{g}}$, $\ell_1=\ulell/3$, $\ell_2=2\ulell/3$, $\ell_3=\olell$,
along with Young's Convolution Inequality
we obtain
\[
\begin{aligned}
\|e_{1,\gamma}*&(e_{\alpha,\beta}*\underline{d\mathfrak{g}})\|_{L^\infty(\ulell,\olell)}\\
&\leq
\|e_{1,\gamma}\|_{L^\infty(\ulell/3,\olell)} \|e_{\alpha,\beta}*\underline{d\mathfrak{g}}\|_{L^1(0,\olell)}
+\|e_{\alpha,\beta}*\underline{d\mathfrak{g}}\|_{L^\infty(2\ulell/3,\olell)} \|e_{1,\gamma}\|_{L^1(0,\olell)}
\\
&\leq
\|e_{1,\gamma}\|_{L^\infty(\ulell/3,\olell)} \|e_{\alpha,\beta}\|_{L^1(0,\olell)}
\|\underline{d\mathfrak{g}}\|_{L^1(0,\olell)}\\
&\quad +
\Bigl(
\|e_{\alpha,\beta}\|_{L^\infty(\ulell/3,\olell)} \|\underline{d\mathfrak{g}}\|_{L^1(0,\olell-\ulell/3)}
+\|\underline{d\mathfrak{g}}\|_{L^\infty(\ulell/3,\olell)} \|e_{\alpha,\beta}\|_{L^1(0,\ulell/3)}
\Bigr)
\|e_{1,\gamma}\|_{L^1(0,\olell)}
\end{aligned}
\]
Using this with $\beta=\gamma=\alpha/3$ and (cf. \eqref{estdiffg}) 
\[
\begin{aligned}
\|\underline{d\mathfrak{g}}\|_{L^1(0,\olell)}&\leq 
\sup_{\tilde{\alpha}\in(\alpha,1)} \|\tilde{\theta}(\cdot,\tilde{\alpha},\alpha/3)\|_{L^1(0,\olell)}\, (1-\alpha), \\
\|\underline{d\mathfrak{g}}\|_{L^\infty(\ulell/3,\olell)}&\leq 
\sup_{\tilde{\alpha}\in(\alpha,1)} \|\tilde{\theta}(\cdot,\tilde{\alpha},\alpha/3)\|_{L^\infty(\ulell/3,\olell,\olell)}\, (1-\alpha),
\end{aligned}
\]
we arrive at the second estimate in \eqref{eqn:rateEbounds} with 
$\hat{C}(\alpha_0,\ulell,\olell)=\sup_{\alpha\in(\alpha_0,1)} \check{C}(\alpha,\ulell,\olell)$,
\[
\begin{aligned}
\check{C}(\alpha,\ulell,\olell)=\,&
\|e_{1,\alpha/3}\|_{L^\infty(\ulell/3,\olell)} \|e_{\alpha,\alpha/3}\|_{L^1(0,\olell)}
\sup_{\tilde{\alpha}\in(\alpha,1)} \|\tilde{\theta}(\cdot,\tilde{\alpha},\alpha/3)\|_{L^1(0,\olell)}
\\&+ 
\|e_{1,\alpha/3}\|_{L^1(0,\olell)} \|e_{\alpha,\alpha/3}\|_{L^\infty(\ulell/3,\olell)}
\sup_{\tilde{\alpha}\in(\alpha,1)} \|\tilde{\theta}(\cdot,\tilde{\alpha},\alpha/3)\|_{L^1(0,\olell)}
\\&+ 
\|e_{1,\alpha/3}\|_{L^1(0,\olell)} \|e_{\alpha,\alpha/3}\|_{L^1(0,\ulell/3)}
\sup_{\tilde{\alpha}\in(\alpha,1)} \|\tilde{\theta}(\cdot,\tilde{\alpha},\alpha/3)\|_{L^\infty(\ulell/3,\olell,\olell)}
\end{aligned}
\]

The first estimate can be shown analogously.

\end{proof}

\section{Appendix: Cauchy 2}\label{sec:appendixCauchy2}

In the impedance case, using the PDE, the right hand side of \eqref{FprimeCauchy2} can be written as 
\[
\begin{aligned}
G(u,\ell)\dl
=& \dl'(x)\Bigl(u_x(x,\ell(x))-\tfrac{\ell'(x)}{\sqrt{1+\ell'(x)^2}}\gamma(x)u(x,\ell(x))\Bigr)\\
&+\dl (x)\Bigl(\tfrac{d}{dx}\Bigl[u_x(x,\ell(x))\Bigr]
-\sqrt{1+\ell'(x)^2}\gamma(x)u_y(x,\ell(x))\Bigr)\\
=& \tfrac{d}{dx}\Bigl[\dl (x)\Bigl(u_x(x,\ell(x))-\tfrac{\ell'(x)}{\sqrt{1+\ell'(x)^2}}\gamma(x)u(x,\ell(x))\Bigr)\Bigr]\\
&+\dl (x)\Bigl(-\sqrt{1+\ell'(x)^2}\gamma(x)u_y(x,\ell(x))+\tfrac{d}{dx}\Bigl[\tfrac{\ell'(x)}{\sqrt{1+\ell'(x)^2}}\gamma(x)u(x,\ell(x))\Bigr]\Bigr)\\
=&\tfrac{d}{dx}\phi(x)-a(x)\phi(x)\ = \tfrac{d}{dx}[\dl (x)\coeffalpha[\ell,u](x)]-\dl (x)\coeffbeta[\ell,u](x)
\end{aligned}
\]
Here using the impedance conditions on $u$, that yield
\begin{equation}\label{eqn:alpha}
\begin{aligned}
&u_x(x,\ell(x))-\tfrac{\ell'(x)}{\sqrt{1+\ell'(x)^2}}\gamma(x)u(x,\ell(x))\\
&=u_x(x,\ell(x))+\tfrac{\ell'(x)}{1+\ell'(x)^2}\bigl(u_y(x,\ell(x))-\ell'(x)u_x(x,\ell(x))\bigr)\\
&=\tfrac{1}{1+\ell'(x)^2}\bigl(u_x(x,\ell(x))+\ell'(x)u_y(x,\ell(x))\bigr)
=\tfrac{\partial_{\ttau}u(x,\ell(x))}{1+\ell'(x)^2}
=\tfrac{1}{1+\ell'(x)^2}\tfrac{d}{dx} u(x,\ell(x))\\
&=
\begin{cases}
\tfrac{d}{dx} u(x,\ell(x))=u_x(x,\ell(x)) &\text{ if } \ell'(x)=0\\
\tfrac{1}{\ell'(x)}\Bigl(\tfrac{\gamma(x)}{\sqrt{1+\ell'(x)^2}}u(x,\ell(x))+u_y(x,\ell(x)))\Bigr)
&\text{ else}\end{cases}
\qquad=:\coeffalpha[\ell,u](x)
\end{aligned}
\end{equation}
In our implementation we use the last expression that is based on 
\[
u_x(x,\ell(x))=
\frac{1}{\ell'(x)}\Bigl(\sqrt{1+\ell'(x)^2}\gamma(x)u(x,\ell(x))+u_y(x,\ell(x)))\Bigr)
\quad \text{ if }\ell'(x)\not=0,
\]
since $u_x$ is difficult to evaluate numerically unless the boundary is flat (case $\ell'(x)=0$). 
Moreover, 
\begin{equation}\label{eqn:beta}
\begin{aligned}
&\Bigl(-\sqrt{1+\ell'(x)^2}\gamma(x)u_y(x,\ell(x))+\tfrac{d}{dx}\Bigl[\tfrac{\ell'(x)}{\sqrt{1+\ell'(x)^2}}\gamma(x)u(x,\ell(x))\Bigr]\Bigr)\\
&=\Bigl\{\tfrac{\ell''(x)}{\sqrt{1+\ell'(x)^2}^3}\gamma(x)
+\tfrac{\ell'(x)}{\sqrt{1+\ell'(x)^2}} \gamma'(x) +\gamma(x)^2\Bigr\} u(x,\ell(x))
\qquad=:\coeffbeta[\ell,u](x)
\end{aligned}
\end{equation}
we have 
\begin{equation}\label{phia}
\phi(x)= \dl(x)\, \coeffalpha[\ell,u](x)
\qquad
a(x)=
\tfrac{\coeffbeta[\ell,u]}{\coeffalpha[\ell,u]}(x)
\end{equation}
with $\coeffalpha$, $\coeffbeta$ as defined in \eqref{eqn:alpha}, \eqref{eqn:beta}.
\\
Thus the Newton step in the impedance case reads as 
\begin{equation*}
\begin{aligned}
(I)\ \ell^{(k+1)}(x)&=\ell^{(k)}(x)-\tfrac{1}{\coeffalpha[\ell^{(k)},u^{(k)}](x)}
\Bigl\{
\exp\Bigl(-\int_0^x\tfrac{\coeffbeta[\ell^{(k)},u^{(k)}]}{\coeffalpha[\ell^{(k)},u^{(k)}]}(s)\, ds\Bigr) \dl(0)\coeffalpha[\ell^{(k)},u^{(k)}](0)\\
&\hspace*{5cm}+\int_0^x b(s) \exp\Bigl(-\int_s^x\tfrac{\coeffbeta[\ell^{(k)},u^{(k)}]}{\coeffalpha[\ell^{(k)},u^{(k)}]}(t)\, dt\Bigr)\, ds
\Bigr\}\\
&\mbox{where } 
b(x)=\partial_\nnu \bar{z}(x,\ell^{(k)}(x)) + \sqrt{1+\ell'(x)^2}\gamma(x)\bar{z}(x,\ell^{(k)}(x)).
\end{aligned}
\end{equation*}
In particular, with Neumann conditions on the lateral boundary $B=\pm\partial_x$ under the compatibility condition $\ell'(0)=0$ we have $\coeffalpha[\ell,u](0)=\partial_\ttau(0,\ell(0))=u_x(0,\ell(0))=0$ and therefore 
\[
(I)\ \ell^{(k+1)}(x)=\ell^{(k)}(x)-\tfrac{1}{\coeffalpha[\ell^{(k)},u^{(k)}](x)}\int_0^x b(s) \exp\Bigl(-\int_s^x\tfrac{\coeffbeta[\ell^{(k)},u^{(k)}]}{\coeffalpha[\ell^{(k)},u^{(k)}]}(t)\, dt\Bigr)\, ds
= \ell^{(k)}(x)-\dl(x),
\]
where the value at the left hand boundary point can be computed by means of l'Hospital's rule as (skipping the argument $[\ell^{(k)}u^{(k)}]$ for better readability)
\[
\lim_{x\to0}\dl(x) = \lim_{x\to0}\frac{\phi(x)}{\coeffalpha(x)} 
= \lim_{x\to0}\frac{\phi'(x)}{\coeffalpha'(x)}
= \lim_{x\to0}\frac{b(x)-\tfrac{\coeffbeta}{\coeffalpha}(x)\phi(x)}{\coeffalpha'(x)}
= \lim_{x\to0}\frac{b(x)-\coeffbeta(x) \dl(x)}{\coeffalpha'(x)}
\]
hence
\[
\lim_{x\to0}\dl(x) 
=\lim_{x\to0} \frac{1}{1+\frac{\coeffbeta(x)}{\coeffalpha'(x)}} \ \frac{b(x)}{\coeffalpha'(x)}
= \lim_{x\to0}\tfrac{b(x)}{\coeffalpha'(x)+\coeffbeta(x)}
= \tfrac{\bar{z}_y(0,\ell^{(k)}(0))+\gamma(0)\bar{z}(0,\ell^{(k)}(0))}{
u^{(k)}_{xx}(0,\ell^{(k)}(0))+({\ell^{(k)}}''\gamma(0)+\gamma(0)^2)u^{(k)}(0,\ell^{(k)}(0))}.
\]

\section*{Acknowledgement}
The work of the first author was supported by the Austrian Science Fund through grant P36318;
the second author was supported in part by the National Science Foundation through award DMS-2111020.

\end{document}